\font \smallrm=cmr10 at 10truept
\font \smallsl=cmsl10 at 10truept
\font \ssmallrm=cmr10 at 9truept
\font \smallbf=cmbx10 at 10pt
\numberwithin{equation}{section}
\newcommand{\subu}[2]{{#1}_{\raise-2pt\hbox{$ \scriptstyle #2 $}}}
\newcommand{\subd}[3]{{#1}_{\raise-2pt\hbox{$ \scriptstyle #2 #3 $}}}
\newtheorem{lema}{Lemma}[subsection]
\newtheorem{theorem}[lema]{Theorem}
\newtheorem{prop}[lema]{Proposition}
\theoremstyle{definition}
\newtheorem{definition}[lema]{Definition}
\newtheorem{rmk}[lema]{Remark}
\newtheorem{rmks}[lema]{Remarks}
\newtheorem{free text}[lema]{}
\newtheorem{obs's}[lema]{Observations}
\theoremstyle{remark}
\newcommand \End{\operatorname{End}}
\newcommand \Ker{\operatorname{Ker}}
\newcommand \smallast {{\operatorname{\raise1,5pt\hbox{$ \scriptscriptstyle \ast $}}}}
\newcommand \smallp {{\operatorname{\raise1pt\hbox{$ \scriptscriptstyle + $}}}}
\newcommand \smallm {{\operatorname{\raise1pt\hbox{$ \scriptscriptstyle - $}}}}
\newcommand \smallpm {{\operatorname{\raise1pt\hbox{$ \scriptscriptstyle \pm $}}}}
\newcommand \smallmp {{\operatorname{\raise1pt\hbox{$ \scriptscriptstyle \mp $}}}}
\newcommand{\zero}{{\bar{0}}}
\newcommand{\one}{{\bar{1}}}
\newcommand{\F}{{\mathcal F}}
\newcommand{\D}{{\mathcal D}}
\def \bq {\mathbf{q}}
\def \NN{\mathbb{N}}
\def \ZZ {\mathbb{Z}}
\def \k {\Bbbk}
\def \kh{{\Bbbk[[\hbar]]}}
\def \kqqm{{\Bbbk\big[\,q\,,q^{-1}\big]}}
\def \kq{{\Bbbk(q)}}
\def \kbqqm{{\Bbbk\big[q^{\pm 1},\bq^{\pm 1}\big]}}
\def \kbq{{\Bbbk(q\,,\bq\,)}}
\def \D {\mathcal{D}}
\def \J {\mathcal{J}}
\def \Dpicc {{\scriptscriptstyle D}}
\def \Ppicc {{\scriptscriptstyle P}}
\def \lieg{\mathfrak{g}}
\def \liegd{\mathfrak{g}^{\raise-1pt\hbox{$ \Dpicc $}}}
\def \liegdP{\mathfrak{g}^{\raise-1pt\hbox{$ \Dpicc $}}_\Ppicc}
\def \liegRpP{\mathfrak{g}^{\raise1pt\hbox{$ \scriptscriptstyle \mathcal{R},p $}}_\Ppicc}
\def \liegRpPa{\mathfrak{g}^{\raise1pt\hbox{$ \scriptscriptstyle \mathcal{R},p $}}_{\Ppicc_{\raise-3pt\hbox{$ \scriptscriptstyle \!\! A $}}}}
\def \liegPsi{\mathfrak{g}^{\raise-1pt\hbox{$ \scriptscriptstyle \Psi $}}}
\def \liegdPsi{\mathfrak{g}^{\raise-1pt\hbox{$ {\scriptscriptstyle D,\Psi} $}}}
\def \liegdotdq{\dot{\lieg}_{\raise-2pt\hbox{$ \Dpicc , \hskip-0,9pt{\scriptstyle \bq} $}}}
\def \liegtildq{\tilde{\lieg}_{\raise-2pt\hbox{$ \Dpicc , \hskip-0,9pt{\scriptstyle \bq} $}}}
\def \liegdq{\lieg_{\raise-2pt\hbox{$ \Dpicc , \hskip-0,9pt{\scriptstyle \bq} $}}}
\def \liegdqcheck{\lieg_{\raise-2pt\hbox{$ \Dpicc , \hskip-0,9pt{\scriptstyle \check{\bq}} $}}}
\def \Gtildeqstar {\widetilde{G}^{\,\raise2pt\hbox{$ \scriptstyle * $}}_{\!\Dpicc , \hskip+0,9pt{\scriptstyle \bq}}}
\def \liegl{\mathfrak{gl}}
\def \liebd{\mathfrak{b}^{\raise-1pt\hbox{$ \Dpicc $}}}
\def \liehd{\mathfrak{h}^{\raise-1pt\hbox{$ \Dpicc $}}}
\def \lieso{\mathfrak{so}}
\def \Phipicc {{\scriptscriptstyle \Phi}}
\def \Mpicc {{\scriptscriptstyle M}}
\def \Uhglnp {U_\hbar\big(\liegl_{\,n}^{\,p}\big)}
\def \Fhglnp {F_\hbar\big[\big[GL_{\,n}^{\,p}\big]\big]}
\def \Uqglnp {U_q\big(\liegl_{\,n}^{\,p}\big)}
\def \Uqintglnp {U_q^{\textsl{int}\!}\big(\liegl_{\,n}^{\,p}\big)}
\def \Fqmlnp {F_q\big[M\!L_{\,n}^{\,p}\big]}
\def \Fqintmlnp {F_q^{\textsl{int}}\big[M\!L_{\,n}^{\,p}\big]}
\def \Fqbullintmlnp {F_{q,\,\bullet}^{\textsl{int}}\big[M\!L_{\,n}^{\,p}\big]}
\def \Fbqmlnp {F_\bq\big[M\!L_{\,n}^{\,p}\big]}
\def \Fbqintmlnp {F_\bq^{\,\textsl{int}}\big[M\!L_{\,n}^{\,p}\big]}
\def \MLnp {\textit{ML}_{\,n}^{\,p}}
\def \UhPhiglnp {U_\hbar^\Phipicc\!\big(\liegl_{\,n}^{\,p}\big)}
\def \FhPhiglnp {F_\hbar^\Phipicc\big[\big[GL_{\,n}^{\,p}\big]\big]}
\def \fhg{{F_\hbar[[G\hskip1,5pt]]}}
\def \uhg{U_\hbar(\hskip0,5pt\lieg)}
\newcommand{\sigmachi}{{\raise-1pt\hbox{$ \, \scriptstyle \dot\sigma_\chi $}}}
\def \pf{\begin{proof}}
\def \epf{\end{proof}}
\theoremstyle{plain}
\begin{document}

\title[MULTIPARAMETER QUANTUM GENERAL LINEAR SUPERGROUP]
 {Multiparameter Quantum  \\
  General Linear Supergroup}

\author[Fabio GAVARINI \ , \ \  Margherita PAOLINI]
{Fabio Gavarini$\,{}^\sharp\,$,  \ \ Margherita PAOLINI$\,{}^\flat\,$}

\address{
   \newline
 ${}^\sharp$  Dipartimento di Matematica,
   \newline
 Universit\`a degli Studi di Roma ``Tor Vergata''   ---   INdAM\,/\,GNSAGA
   \newline
 Via della ricerca scientifica 1,  \; I\,-00133 Roma, ITALY   ---   {\tt gavarini@mat.uniroma2.it}%
 \vspace*{0.5cm}
   \newline
  ${}^\flat$  Independent Researcher   ---   {\tt margherita.paolini.mp@gmail.com} }

\thanks{\noindent 2020 \emph{MSC:}\,  20G42, 16T05, 81R50   ---
\emph{Keywords:} Quantum Groups, Quantum Function Algebras.}


\begin{abstract}
 \medskip
   We introduce uniparametric and multiparametric quantisations of the general linear supergroup, in the form of ``quantised function algebras'',
 both in a  \textsl{formal\/}  setting   --- yielding ``quantum formal series Hopf superalgebras'', \`a la Drinfeld ---   and in a  \textsl{polynomial\/}  one   --- closer to Manin's point of view.
                                                                       \par
   In the uniparametric setting, we start from quantised universal enveloping superalgebras over  $ \liegl_n $  (endowed with a super-structure), as in  \cite{Ya}  and  \cite{Zh}:  through a direct approach, we construct their linear dual, thus finding the quantum formal series Hopf superalgebras mentioned above, which are described in detail via an explicit presentation.  Starting from the latter, then, we perform a deformation by a well-chosen 2--cocycle, thus getting a multiparametric quantisation, described again by an explicit presentation: this is, in turn, the dual to the multiparametric quantised universal enveloping algebra over  $ \liegl_n $  from  \cite{GGP}.
                                                                       \par
   We also provide some ``polynomial versions'' of these quantisations, both for the uniparametric and the multiparametric case.  In particular, we compare the latter to Manin's quantum function algebras from  \cite{Ma}.
                                                                       \par
   Finally, both for the uniparametric and the multiparametric setting, we provide suitable PBW-like theorems, in ``formal'' and in ``polynomial'' versions alike.
\end{abstract}

{\ } \vskip-51pt

   \centerline{ \smallrm  {\smallsl Journal of Pure and Applied Algebra\/}
{\smallbf 230}  (2026), no.\ 10, 108344, 35 pages }
   \centerline{\smallrm {\smallbf DOI:}  10.1016/j.jpaa.2026.108344}
 {\ }
\vskip17pt

\maketitle

\tableofcontents



\section{Introduction}  \label{sec: intro}

\vskip7pt

   The so-called ``quantum groups'' appear in literature in two forms: either quantised universal enveloping algebras (in short, QUEA's) over some Lie algebra  $ \lieg \, $,  or quantised function algebras over some Lie or algebraic group  $ G $.  In both cases, ``quantised'' is meant in two possible ways, namely:
                                                                        \par
   --- \  a  \textit{formal\/}  one, where we consider  \textsl{topological\/}  Hopf algebras, say  $ U_\hbar(\lieg) $  or  $ F_\hbar[[G]] \, $,  over  the ring  $ \kh $  of formal power series in the deformation parameter  $ \hbar \, $,
                                                                        \par
   --- \  a  \textit{polynomial\/}  one, where we consider  \textsl{standard\/}  Hopf algebras,  $ U_q(\lieg) $  or  $ F_q[G] \, $,  over a base ring where an element  $ q $  takes the role of ``deformation parameter'', e.g.\  $ \kq $  or  $ \kqqm $  (the ring of rational functions or Laurent polynomials in  $ q \, $).
 \vskip3pt
   In either case, the quantisation canonically defines, as ``semiclassical limit'', an additional Poisson structure on the underlying geometrical object, namely a Lie cobracket on  $ \lieg $   --- turning the latter into a Lie bialgebra ---   and a Poisson bracket on  $ G $   --- making it into a Poisson (Lie or algebraic) group; see  \cite{Dr}  or  \cite{CP}  for details.
                                                                        \par
   One can also consider  \textit{multiparametric\/}  quantisations, involving several parameters: nevertheless, only one of them ``rules'' the quantisation, whereas the others are responsible for the induced Poisson structure at the semiclassical limit.  A typical procedure to produce such quantisations involve Hopf-theoretical deformations, either via twists or via 2--cocycles: one starts with a uniparametric quantisation, and then applying a deformation (by twist or by 2--cocycle) ends up with a multiparametr quantisation, whose ``extra parameters'' come from the twist or the 2--cocycle involved in the process (cf.\ e.g.,  \cite{GG1},  \cite{GG2}  and references therein for more details).
                                                                        \par
   All the above applies as well, up to technicalities, to the context of ``quantum supergroups'', i.e.\ quantisations of Lie superalgebras and supergroups.
 \vskip5pt
   In the present paper, we deal with the general linear Lie superalgebra and supergroup, that is  $ \liegl_n $  and  $ \textit{GL}_n $  endowed with some ``parity''.  In this setup,  \textsl{uniparametric\/}  quantised universal enveloping superalgebras (in short, QUESA's) have been introduced in  \cite{Ya}  (in great generality) in a  \textit{formal\/}  version, and then taken up again in  \cite{Zh}  in  \textit{polynomial\/}  form.  Starting from that,  \textsl{multiparametric\/}  QUESA's have been constructed in  \cite{GGP}  via the process of deformation by twist explained above.
                                                                        \par
   Our goal is to work out the dual side, i.e.\ to introduce suitable dual objects to the QUESA's for  $ \liegl_{\,n}^{\,p} $  mentioned above, in all their variants   --- uniparametric or multiparametric, formal or polynomial.
                                                                        \par
   In the uniparametric setting, we start from Yamane's QUESA  $ \Uhglnp \, $,  where the superscript  ``$ \, p \, $''  accounts for the underlying parity.  Through a direct approach, we construct its full linear dual, which is concretely realised as a topological Hopf superalgebra  $ \Fhglnp $  with a non-degenerate Hopf pairing with  $ \Uhglnp \, $.  In particular, we find that this  $ \Fhglnp $  is indeed a quantum formal series Hopf superalgebra (in short, QFSHA) as we were looking for: we provide for it an explicit presentation by generators and relations and a suitable PBW-like theorem.
                                                                        \par
   For the multiparametric side of the story, we rely on the uniparametric one and resort to a deformation procedure.  Indeed, as every multiparametric QUESA  $ \UhPhiglnp $  from  \cite{GGP}  is obtained from Yamane's QUESA  $ \Uhglnp $  via deformation by some twist  $ \F_\Phi \, $,  one can get the dual  $ \, {\big( \UhPhiglnp \big)}^* \, $
 as deformation of  $ {\big( \Uhglnp \big)}^* $  by the 2--cocycle  $ \sigma_\Phipicc $  corresponding to  $ \F_\Phi \, $.  But    $ \, {\big( \UhPhiglnp \big)}^* = \Fhglnp \, $,  the uniparametric QFSHA that we just constructed; so we only have to compute the deformation  $ \, {\big( \Fhglnp \big)}_{\sigma_\Phipicc} \, $.  The final outcome is a multiparametric QFSHA  $ \FhPhiglnp \, $  for which we find a presentation by generators and relations and a PBW-like theorem.
 \vskip3pt
   After achieving our goal for  \textit{formal\/}  quantisations, we obtain the parallel result for  \textsl{polynomial\/}  ones in a very simple way   --- roughly, selecting suitable subalgebras inside  $ \Fhglnp $  (for the uniparametric case) and  $ \FhPhiglnp $  (for the multiparametric one).  In particular, for the latter we discuss a bit its direct comparison with Manin's multiparametric QFSA's introduced in  \cite{Ma}.

\vskip11pt

   \centerline{\ssmallrm ACKNOWLEDGEMENTS}
 \vskip3pt
   {\smallrm The authors thankfully acknowledge support by:
   \;\textit{(1)}\,  the MIUR  {\smallsl Excellence Department Project
   MatMod@TOV (CUP E83C23000330006)\/}  awarded to the Department of Mathematics,
   University of Rome ``Tor Vergata'' (Italy),
   \;\textit{(2)}\,  the research branch  {\smallsl GNSAGA\/}  of INdAM (Italy),
   \;\textit{(3)}\,  the European research network  {\smallsl COST Action CaLISTA CA21109}.}

\bigskip
 \vskip13pt

\section{Multiparametric QUESA for the general linear supergroup}  \label{sec: mp-QUESA}
 \vskip7pt
   In this section, we first recall Yamane's QUESA's, bounding ourselves to those of type  $ A $  (in general, they are defined for each possible type of simple Lie superalgebra of basic type), as introduced in  \cite{Ya}.  Then we recall also their ``multiparametric version'', introduced in  \cite{GGP}.

\vskip11pt

\subsection{Yamane's QUESA's for the general linear supergroup}  \label{subsec: Yamane's QUESA}
 In his ground-breaking paper  \cite{Ya},  Yamane introduced explicit quantisations of (the universal enveloping superalgebra of) any simple Lie superalgebra of basic type, say  $ \lieg \, $: as such, they are quantised universal enveloping superalgebras (=QUESA's) in the sense of Drinfeld.  Their definition is given via a presentation by generators and relations: in turn, the latter depends on the choice of a specific Dynkin diagram (or, equivalently, of a specific Cartan matrix) associated with  $ \lieg \, $;  in particular, different diagrams (or matrices) give rise to different quantisations, and it is not clear whether they are isomorphic (as topological Hopf superalgebras) or not, though their semiclassical limit always is.  In this paper, we consider such QUESA's only for type  $ A \, $:  we will call them ``Yamane's QUESA's''.

\medskip

\begin{definition}  \label{def: Yamane-Uhg}
 Fix a field  $ \k $  of characteristic zero.  Given  $ \, n \in \NN \, $,  $ \, n \geq 2 \, $,  let  $ \, I_n := \{1,\dots,n\} \, $  and  $ \, I_{n-1} := I \setminus \{n\} \, $.  Let  $ \mathbf{V}_n $  be a  $ \k $--vector  space of dimension  $ n \, $,  and  $ \, {\{\varepsilon_i\}}_{i \in I_n} \, $  be a  $ \k $--basis  of  $ \mathbf{V}_{\!n} \, $.  We fix a  \textit{parity function\/}  $ \, p \,:\, I_n \relbar\joinrel\longrightarrow \ZZ_2 \, $,  \,and the (unique)  $ \k $--bilinear  form  $ \, (\cdot\,, \cdot) \, $  on  $ \mathbf{V}_{\!n} $  such that  $ \, (\varepsilon_i \, , \varepsilon_j \big) = {(-1)}^{p(i)} \delta_{i,j} \, $.  We define the  \textit{root system}  $ \, \Phi = \Phi_+ \cup \Phi_- \subseteq \mathbf{V}_{\!n} \, $  by setting
 $ \, \Phi_+ := \big\{\, \alpha_{i,\,j} := \varepsilon_i - \varepsilon_j \,\big|\, i\,, j \in I_n \, , \, i < j \,\big\} \, $
 and
 $ \, \Phi_- := -\Phi_+ = \big\{ -\!\alpha_{i,\,j} = \varepsilon_j - \varepsilon_i \,\big|\, i\,, j \in I_n \, , \, i < j \,\big\} \, $,
 \,and we let the set of (positive)  \textit{simple roots\/}  to be
 $ \, \Pi := \big\{\, \alpha_i := \alpha_{i,\,i+1} \,\big|\, i \in I_{n-1} \big\} \, $.
 \vskip5pt
   For later use, we fix the important notation  $ \; p_{i,j} := p(i)+p(j) \; $  for all  $ \, i \, , j \in I_n \, $.
 \vskip5pt
   We define the  \textit{Dynkin diagram\/}  (associated with the previous data) as follows.  First, we consider a standard Dynkin diagram of type $ A_{n-1} \, $,  namely a (non-oriented) graph with  $ (n-1) $  vertices, labelled by  $ 1 $,  $ \dots $,  $ n-1 $,  with just one single edge connecting the vertex  $ \, i \, $  with the vertex  $ \, i+1 \, $  (and viceversa),  for all  $ \, 1 \leq i \leq n-1 \, $.  Second, we represent the  $ i $--th  vertex by drawing either a ``white'' node  $ \fullmoon $  or a ``grey'' node  $ \otimes \, $,  according to whether  $ \, (\alpha_i\,,\alpha_i) \not= 0 \, $  or  $ \, (\alpha_i\,,\alpha_i) = 0 \, $,  \,respectively (or, equivalently, whether  $ \, p(i) \not= p(i+1) \, $  or  $ \, p(i) = p(i+1) \, $,  \,or still whether  $ \, p_{i,\,i+1} = \one \, $  or  $ \, p_{i,\,i+1} = \zero \, $).
                                                                                \par
   Overall, we will refer to such a bunch  $ \D $  of data as to a  \textit{Dynkin datum\/}:  note that it is uniquely determined by the pair  $ \, \big(\textit{rank\/}(\D)\,,\textit{parity\/}(\D)\big) := (n\,,p) \, $.   \hfill  $ \diamond $
\end{definition}

\medskip

\begin{rmk}  \label{rmk: Dynk-dat===>pres-gl(n)}
 Any Dynkin diagram as in  Definition \ref{def: Yamane-Uhg}  is nothing but a ``Dynkin diagram'' in the (standard) sense of Kac   --- see  \cite{Ka}  ---   for the Lie superalgebra structure over  $ \liegl(n) $  corresponding to the parity  $ p $   --- hereafter denoted by  $ \, \liegl_{\,n}^{\,p} \, $.  Among all these Dynkin diagrams, those with just one single grey node (every other one being white), called   \textsl{distinguished},  classify all isomorphism classes of simple Lie superalgebras onto  $ \liegl(n) $  that are not just totally even (i.e., they are not Lie algebra structures).  In particular, the same Lie superalgebra structure can be associated to different Dynkin diagrams: in this case, every such diagram gives rise to a different presentation of the Lie algebra, though the latter stands the same.  Correspondingly, one also has different presentations of the universal enveloping superalgebra  $ U\big(\liegl_{\,n}^{\,p}\big) \, $.
\end{rmk}

\medskip

   Following Yamane (see  \cite{Ya})   --- but with a choice of notation much closer to Zhang's  (cf.\ \cite{Zh})  ---   we introduce our quantisation of  $ U\big(\liegl_{\,n}^{\,p}\big) $  as follows:

\medskip

\begin{definition}  \label{def: form-Uhg}
 Fix a Dynkin datum  $ \D $  as above, with rank  $ n $  and parity  $ p \, $.  Fix a field  $ \k $  of characteristic zero.  In the ring of formal power series  $ \kh \, $,  set
  $$  q \, := \, \exp(\hbar) \; ,  \quad \qquad  q_i \, := \, q^{{(-1)}^{p(i)}} = \, q^{(\varepsilon_i,\,\varepsilon_i)}  \qquad  \forall \;\; i \in I_n  $$
   \indent   We define  $ \Uhglnp $  as follows: it is the unital, associative,  $ \hbar $--adically  complete  $ \k[[\hbar]] $--superalgebra  with generators
  $$  E_1 \, , \, E_2 \, , \, \dots \, , \, E_{n-1} \, , \varGamma_1 \, , \, \varGamma_2 \, , \, \dots \, , \, \varGamma_{n-1} \, , \, \varGamma_n \, ,  \, F_1 \, , \, F_2 \, , \, \dots \, , \, F_{n-1}  $$
 having parity  $ \; |E_r| := p_{r,r+1} \; $,  $ \; |\varGamma_s| := 0 \; $,  $ \; |F_r| := p_{r+1,r} \; $   --- for all  $ \, r \in I_{n-1} \, $  and  $ \, s \in I_n \, $  ---   and relations (for all  $ \, i \, , j \in I_{n-1} := \{1, \dots , n-1\} \, $,  $ \, k \, , \ell \in I_n := \{1, \dots , n\} \, $)
\begin{equation}  \label{eq: UhgR1}
  \begin{gathered}
    [\varGamma_k \, , \varGamma_\ell\,] \, = \, 0
  \end{gathered}
\end{equation}
\begin{equation}  \label{eq:  UhgR2}
  \begin{gathered}
    [\varGamma_k \,, F_j] \, - \, (\delta_{k,j+1} - \delta_{k,j}) \, F_j \, = \, 0  \quad ,   \qquad  [\varGamma_k \,, E_j] \, - \, (\delta_{k,j} - \delta_{k,j+1}) \, E_j \, = \, 0  \\
  \end{gathered}
\end{equation}
\begin{equation}  \label{eq:UqgR3}
  \begin{gathered}
    [E_i \, , F_j] \, - \, \delta_{i,j} \, \frac{\; e^{+\hbar H_i} - e^{-\hbar H_i} \;}{\; q_i^{+1} - q_i^{-1} \;} \, = \, 0
  \end{gathered}
\end{equation}
\begin{equation}  \label{eq:UqgR4}
  \begin{gathered}
    \quad   E_i^2 \, = \, 0 \; ,  \quad  F_i^2 \, = \, 0   \quad \qquad  \text{if \ }  p_{i,\,i+1} = \one = p_{i+1,\,i}
  \end{gathered}
\end{equation}
\begin{equation}  \label{eq:UqgR5}
  \begin{gathered}
    [E_i \, , E_j] \, = \, 0  \quad ,   \qquad   [F_i \, , F_j] \, = \, 0  \qquad  \text{if \ }  |i-j| > 1
  \end{gathered}
\end{equation}
\begin{equation}  \label{eq:UqgR6}
  \begin{gathered}
    \hskip-5pt  E_i^2 \, E_j \, - \, \big( q + q^{-1} \big) \, E_i \, E_j \, E_i \, + \, E_j \, E_i^2 \, = \, 0   \qquad  \text{if \ }  p_{i,\,i+1} = \zero  \text{\;\ \ and \;\ }  |i-j| = 1  \\
    \hskip-0pt  F_i^2 \, F_j - \big( q + q^{-1} \big) \, F_i \, F_j \, F_i \, + \, F_j \, F_i^2 \, = \, 0   \;\qquad  \text{if \ }  p_{i+1,\,i} = \zero  \text{\;\ \ and \;\ }  |i-j| = 1
  \end{gathered}
\end{equation}
\begin{equation}  \label{eq:UqgR7}
  \begin{gathered}
    \big[ \big[ {[E_i,E_j]}_{q_j} , E_k \big]_{q_{j+1}} , E_j \big] \, = \, 0   \qquad  \text{\; if \ }  p_{j,\,j+1} = \one  \text{\;\ \ and \;\ }  k = j+1 = i+2  \\
    \big[ \big[ {[F_i,F_j]}_{q_j} , F_k \big]_{q_{j+1}} , E_j \big] \, = \, 0   \qquad  \text{\; if \ }  p_{j+1,\,j} = \one  \text{\;\ \ and \;\ }  k = j+1 = i+2
  \end{gathered}
\end{equation}
where hereafter we use such notation as
  $$  \displaylines{
   H_i  \, := \,  {(-1)}^{p(i)} \varGamma_i - {(-1)}^{p(i+1)} \varGamma_{i+1}  \cr
   \hfill   {[A\,,B]}_c  \, := \,  A B - c \, {(-1)}^{|A|\,|B|} B A  \quad ,   \qquad   [A\,,B]  \, := \,  {[A\,,B]}_1   \hfill   \diamond  }  $$
\end{definition}

\vskip9pt

   The most relevant fact about the algebra  $ \Uhglnp $  defined above is that it admits a Hopf structure, as specified in the following result:

\vskip9pt

\begin{theorem}  \label{thm: Hopf-struct x Uhglnp}
 \textsl{(cf.\  \cite{Ya})}  There exists a structure of topological Hopf superalgebra on  $ \Uhglnp $   --- with respect to the  $ \hbar $--adic  topology ---   which is uniquely described by the following formulas on generators (for all  $ \, i \in I_{n-1} \, , \; k \in I_n \, $):
  $$  \displaylines{
   \Delta(E_i)  \; = \;  E_i \otimes 1 + e^{+\hbar H_i} \otimes E_i \; ,  \qquad  S(E_i)  \; = \;  -e^{-\hbar H_i} E_i \; ,  \qquad  \epsilon(E_i) \; = \; 0  \cr
   \quad  \Delta(\varGamma_k)  \; = \;  \varGamma_k \otimes 1 + 1 \otimes \varGamma_k \; ,  \hskip-5pt \qquad \qquad S(\varGamma_k) \; = \; -\varGamma_k \; ,  \hskip5pt \quad \qquad  \epsilon(\varGamma_k)  \; = \;  0  \cr
   \Delta(F_i)  \; = \;  F_i\otimes e^{-\hbar H_i} + 1 \otimes F_i \; ,  \qquad  S(F_i) \; = \; - F_i \, e^{+\hbar H_i} \; ,  \qquad  \epsilon(F_i) \; = \; 0  }  $$
\end{theorem}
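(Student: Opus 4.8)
The plan is to establish both uniqueness and existence of the claimed structure, working throughout in the category of $\hbar$-adically complete $\kh$-superalgebras (so that $\,\widehat{\otimes}\,$ denotes the completed graded tensor product, with $(a\otimes b)(c\otimes d)=(-1)^{|b||c|}ac\otimes bd$). Uniqueness is the easy half: the elements $E_i$, $\varGamma_k$, $F_i$ topologically generate $\Uhglnp$, and $\Delta$, $\epsilon$ are required to be morphisms of topological superalgebras while $S$ is required to satisfy $S(ab)=(-1)^{|a||b|}S(b)S(a)$; hence any such maps are determined by their values on those generators, and at most one Hopf structure obeying the stated formulas can exist.

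For existence I would start from the $\hbar$-adically complete free superalgebra $\widehat{T}$ on the listed generators (with the prescribed $\ZZ_2$-degrees), so that $\Uhglnp=\widehat{T}/\mathcal{J}$ with $\mathcal{J}$ the closed two-sided ideal generated by the left-hand sides of \eqref{eq: UhgR1}--\eqref{eq:UqgR7}. The formulas of the statement determine unique continuous superalgebra morphisms $\widetilde{\Delta}\colon\widehat{T}\to\widehat{T}\,\widehat{\otimes}\,\widehat{T}$ and $\widetilde{\epsilon}\colon\widehat{T}\to\kh$, and a unique continuous anti-morphism $\widetilde{S}\colon\widehat{T}\to\widehat{T}$. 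The crux is to show these descend to $\Uhglnp$, i.e.\ that $\widetilde{\Delta}$ sends every relator of \eqref{eq: UhgR1}--\eqref{eq:UqgR7} into $\mathcal{J}\,\widehat{\otimes}\,\widehat{T}+\widehat{T}\,\widehat{\otimes}\,\mathcal{J}$, that $\widetilde{S}$ sends it into $\mathcal{J}$, and that $\widetilde{\epsilon}$ kills it. The last is trivial. The first two rest on a few observations made once and for all: each $e^{\pm\hbar H_i}$ is grouplike for $\widetilde{\Delta}$ and inverted by $\widetilde{S}$; the $E_i$ are $\bigl(1,e^{\hbar H_i}\bigr)$-skew-primitive and the $F_i$ are $\bigl(e^{-\hbar H_i},1\bigr)$-skew-primitive; $[H_i,H_j]=0$; and, reading off \eqref{eq:  UhgR2}, the straightening rules $e^{\hbar H_i}E_j=q^{(\alpha_i,\alpha_j)}E_j\,e^{\hbar H_i}$ and $e^{\hbar H_i}F_j=q^{-(\alpha_i,\alpha_j)}F_j\,e^{\hbar H_i}$ hold.

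Granting these, the relations split into an easy bulk and a hard core. Relations \eqref{eq: UhgR1} and \eqref{eq:  UhgR2} are immediate from primitivity and mutual commutativity of the $\varGamma_k$; relation \eqref{eq:UqgR3} follows from the familiar telescoping computation, where expanding $\widetilde{\Delta}(E_iF_j-F_jE_i)$ and applying the straightening rules makes all mixed terms cancel except those matching $\widetilde{\Delta}$ of the right-hand side; relation \eqref{eq:UqgR4} at a grey node holds because $|E_i|=\one$ forces the two middle terms of $\widetilde{\Delta}(E_i)^2$ to cancel, so that $\widetilde{\Delta}(E_i^2)=E_i^2\otimes 1+e^{2\hbar H_i}\otimes E_i^2=0$ (and symmetrically for $F_i$, using $(\alpha_i,\alpha_i)=0$); and relation \eqref{eq:UqgR5} for $|i-j|>1$ is clear since there $(\alpha_i,\alpha_j)=0$, whence the cross terms in $\widetilde{\Delta}([E_i,E_j])$ cancel and the surviving terms vanish by $[E_i,E_j]=0$. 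The genuine work is in \eqref{eq:UqgR6} and, above all, in the degree-four super-Serre relations \eqref{eq:UqgR7}: here one expands $\widetilde{\Delta}$ of the (super-)Serre element, regroups according to how many $E$-factors fall in the right tensor leg, and checks the vanishing of each homogeneous piece using the relevant $q$-binomial $q+q^{-1}$, the straightening rules, and, at grey nodes, $E_i^2=0$ --- this being exactly the verification carried out in \cite{Ya1}. The corresponding $\widetilde{S}$-statements are the mirror computations, or can be read off a posteriori once $\Delta$ is available, from the convolution characterisation of $S$.

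It then remains to verify the Hopf axioms, which --- all structure maps being continuous (anti-)morphisms out of a topologically generated algebra --- need only be checked on $E_i$, $\varGamma_k$, $F_i$. Coassociativity and the counit identities are one-line inspections: e.g.\ $(\Delta\otimes\id)\Delta(E_i)=E_i\otimes1\otimes1+e^{\hbar H_i}\otimes E_i\otimes1+e^{\hbar H_i}\otimes e^{\hbar H_i}\otimes E_i=(\id\otimes\Delta)\Delta(E_i)$ by grouplikeness of $e^{\hbar H_i}$. The antipode identities $m\circ(S\otimes\id)\circ\Delta=\eta\circ\epsilon=m\circ(\id\otimes S)\circ\Delta$ reduce to a generating set by a length induction and are then immediate, e.g.\ $S(E_i)\cdot1+S(e^{\hbar H_i})\cdot E_i=-e^{-\hbar H_i}E_i+e^{-\hbar H_i}E_i=0=\eta\epsilon(E_i)$; continuity for the $\hbar$-adic topology is automatic. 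The single real obstacle in the argument is the compatibility of $\Delta$ and $S$ with the higher Serre relations \eqref{eq:UqgR6}--\eqref{eq:UqgR7}, where the Koszul signs and the $q$-commutators $[A,B]_c$ must be tracked carefully; a cleaner alternative is to realise $\Uhglnp$ as a quotient of the quantum double of its positive and negative Borel subalgebras, for which a Hopf structure is automatic, but in every approach these are the relations carrying the content.
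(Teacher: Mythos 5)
Your outline is sound, and in fact it supplies more than the paper does: the paper states this theorem with no proof at all, simply citing Yamane's original work \cite{Ya1}, so there is no internal argument to compare against. Your scaffolding is the expected one --- define $\widetilde\Delta$, $\widetilde\epsilon$, $\widetilde S$ on the $\hbar$-adically completed free superalgebra, show the closed ideal of defining relations is a Hopf (super)ideal, then verify the axioms on generators --- and your spot checks are correct: $e^{\pm\hbar H_i}$ is grouplike already in the completed free algebra (the two tensor legs commute, even though the $\varGamma_k$ do not yet commute there), the condition $p_{i,i+1}=\one$ gives simultaneously $|E_i|=\one$ and $(\alpha_i,\alpha_i)=0$, which is exactly what makes $\Delta\big(E_i^{\,2}\big)=E_i^{\,2}\otimes 1+e^{2\hbar H_i}\otimes E_i^{\,2}$, and the cross-term cancellations in the $[E_i,F_j]$ computation indeed hold only \emph{modulo} the ideal (since the straightening rules are themselves relations), which is all that is required. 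Two points deserve to be made explicit rather than gestured at: for the antipode, ``read off a posteriori from the convolution characterisation'' needs either the direct verification that $\widetilde S$ preserves the relation ideal, or the standard $\hbar$-adic argument that a bialgebra deformation of the Hopf superalgebra $U\big(\liegl_{\,n}^{\,p}\big)$ automatically has a (unique) antipode by convolution-inverting $\operatorname{id}$ order by order in $\hbar$; and the compatibility of $\Delta$ with the quartic relations \eqref{eq:UqgR7} is genuinely the heart of the matter, which you defer to \cite{Ya1} --- legitimate here, since the paper itself rests entirely on that reference for this statement, but it is the one place where your write-up, like the paper, contains no actual computation.
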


\vskip7pt

\begin{rmk}  \label{rmk: Uhglnp-is-QUESA}
 Because of its properties, including what is detailed in  Theorem \ref{thm: Hopf-struct x Uhglnp},  the Hopf superalgebra  $ \Uhglnp $  introduced in  Definition \ref{def: form-Uhg}  is a ``quantized universal enveloping superalgebra'' in the sense of Drinfeld   --- cf.\  \cite{Dr}, \S 7,  and  \cite{Ga}, Definition 1.2, suitably adapted to the present setup of quantum  \textsl{super\/}groups).  This motivates the following definition.
\end{rmk}

\vskip7pt

\begin{definition}  \label{def: QUESA Uhglnp}
 We call the Hopf superalgebra  $ \Uhglnp $  introduced in  Definition \ref{def: form-Uhg}  above the  \textit{quantized universal enveloping superalgebra (or just ``QUESA'', in short) associated with the Lie superbialgebra  $ \liegl_{\,n}^{\,p} \, $}.   \hfill  $ \diamond $
\end{definition}

\vskip11pt

\subsection{From Yamane's QUESA's to multiparametric QUESA's}  \label{subsec: Yam-Mp_QUESA's}
 In  \cite{GGP},  a multiparametric counterpart of Yamane's QUESA's was introduced.  In short, it was found through a process of  \textsl{deformation by twist},  after carefully choosing the twist in a broad family, called of  \textit{``toral'' twists\/}  in that, roughly, they belong to the ``toral part'' of Yamane's QUESA that we start from.  Hereafter we shortly summarise that construction, when is is applied to Yamane's QUESA of type  $ A \, $,  namely the  $ \Uhglnp $  given in  Definition \ref{def: form-Uhg}  above, with its Hopf structure from  Theorem \ref{thm: Hopf-struct x Uhglnp}; for further details, we refer to  \cite{GGP}.
                                                                              \par
   To begin with, we choose any antisymmetric matrix of size  $ (n+1) $  with entries in  $ \, \kh \, $,  say  $ \, \Phi = {\big( \phi_{t,\ell} \big)}_{t=1,\dots,n;}^{\ell=1,\dots,n;} \in \mathfrak{so}_n\big(\kh\big) \, $.  Out of that, we define the element
\begin{equation}   \label{eq: Resh-twist_F_Phi - 1}
  \F_\Phi  \,\; := \;\,  \exp\Big(\hskip1pt \hbar \; 2^{-1} {\textstyle \sum_{t,\ell=1}^n} \, \phi_{t,\ell} \, \varGamma_t \otimes \varGamma_\ell \Big)  \;\; \in \;\;  \Uhglnp \,\widehat{\otimes}\, \Uhglnp
\end{equation}
 which is actually a  {\sl twist\/}  element for  $ \Uhglnp \, $,  in the standard sense of Hopf algebra theory (cf.\  \cite{GGP}, \S 4.1.5).  Using it, we construct a new (topological) Hopf algebra  $ \, {\Uhglnp}^{\F_\Phi} $,  \,known as the  \textsl{deformation of  $ \, \Uhglnp $  by the twist $ \F_\Phi \, $}:  \,this is isomorphic to  $ \Uhglnp $  as a superalgebra but with a new, twisted (super)coalgebra structure, whose coproduct, counit and antipode are described in the next statement:

\vskip9pt

\begin{theorem}  \label{thm: Hopf-struct x Mp-UhPhiglnp}
 \textsl{(cf.\  \cite{GGP},  \S 4.2.7)}
   Fix any antisymmetric matrix of size  $ n+1 $  with entries in  $ \, \kh \, $,  say  $ \, \Phi = {\big( \phi_{t,\ell} \big)}_{t=1,\dots,n;}^{\ell=1,\dots,n;} \in \mathfrak{so}_n\big(\kh\big) \, $,  \,and define in  $ \Uhglnp $
   the elements
%
 (for all  $ \, i \in I_{n-1} \, $)
  $$  T_{i,+}^{\,\Phipicc} \, := \,
 2^{-1} \, {\textstyle \sum_{\ell=1}^n} \big( \phi_{i,\ell} \, \varGamma_\ell \, - \, \phi_{i+1,\ell} \, \varGamma_\ell \big) \;\; ,
   \qquad  T_{i,-}^{\,\Phipicc} \, := \,
 2^{-1} \, {\textstyle \sum_{\ell=1}^n} \big( \phi_{\ell,i} \, \varGamma_\ell \, - \, \phi_{\ell,i+1} \, \varGamma_\ell \big)  $$
%
%
 Then there exists a structure of topological Hopf superalgebra on  $ \Uhglnp $   --- with respect to the  $ \hbar $--adic  topology ---   which is uniquely described by the following formulas on generators (for all  $ \, i \in I_{n-1} \, , \; k \in I_n \, $):
  $$  \displaylines{
   \Delta\big(E_i\big)  \; = \;
 E_i \otimes e^{+\hbar \, T_{i,+}^{\,\Phipicc}} \, + \, e^{+\hbar \, (H_i + T_{i,-}^{\,\Phipicc})} \otimes
E_i  \quad ,
   \qquad  \epsilon\big(E_i\big) \, = \, 0  \cr
   \Delta\big(\varGamma_k\big)  \; = \;  \varGamma_k \otimes 1 \, + \, 1 \otimes \varGamma_k  \quad ,   \quad \qquad  \epsilon\big(\varGamma_k\big) \, = \, 0   \quad  \cr
   \Delta\big(F_i\big)  \; = \;
   F_i \otimes e^{-\hbar \, (H_i + T_{i,+}^{\,\Phipicc})} + \, e^{-\hbar \, T_{i,-}^{\,\Phipicc}} \otimes F_i  \quad ,
   \qquad  \epsilon\big(F_i\big) \, = \, 0  \cr
   S\big(E_i\big)  =  - e^{-\hbar \, (H_i +  T_{i,-}^{\,\Phipicc})} E_i \, e^{-\hbar \, T_{i,+}^{\,\Phipicc}}  \; ,
   \;\;  S\big(\varGamma_k\big)  =  -\varGamma_k  \; ,
   \;\;  S\big(F_i\big)  =  - e^{+\hbar \, T_{i,-}^{\,\Phipicc}} \, F_\ell \, e^{+\hbar \, (H_i + T_{i,+}^{\,\Phipicc})}  }  $$
   \indent   In particular, such a Hopf algebra is isomorphic with  $ \, {\Uhglnp}^{\F_\Phi} $,  \,the deformation of  $ \, \Uhglnp $  by the twist  $ \F_\Phi $  in  \eqref{eq: Resh-twist_F_Phi - 1}.     \qed
\end{theorem}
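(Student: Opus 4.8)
The plan is to deduce the statement directly from the general theory of \emph{deformation by twist}, using that the element $\F_\Phi$ in \eqref{eq: Resh-twist_F_Phi - 1} is a twist for $\Uhglnp$ (as recalled in \cite{GGP}, \S 4.1.5). Recall that for any topological Hopf superalgebra $H$ and any twist $\F \in H \,\widehat{\otimes}\, H$, the deformation $H^{\F}$ coincides with $H$ as an algebra and as a counital coalgebra, whereas $\Delta^{\F}(x) = \F\,\Delta(x)\,\F^{-1}$ and $S^{\F}(x) = v\,S(x)\,v^{-1}$, with $v := m \circ (\id \otimes S)(\F)$. Hence the whole proof reduces to evaluating these conjugations on the algebra generators $E_i$, $\varGamma_k$, $F_i$, starting from the Hopf structure of Theorem \ref{thm: Hopf-struct x Uhglnp}, and comparing the outcome with the asserted formulas. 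Once this matching is done, existence of the claimed Hopf structure is automatic --- it is $\big(\Uhglnp\big)^{\F_\Phi}$ --- and uniqueness is clear, since a topological Hopf superalgebra structure extending the prescribed algebra structure is determined by the values of $\Delta$, $\epsilon$, $S$ on the generators.

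Concretely, write $\F_\Phi = \exp(W)$ with $W := \hbar\,2^{-1} \sum_{t,\ell} \phi_{t,\ell}\, \varGamma_t \otimes \varGamma_\ell$. Relation \eqref{eq: UhgR1} gives $[W, \varGamma_k \otimes 1] = [W, 1 \otimes \varGamma_k] = 0$, so $\Delta^{\F_\Phi}(\varGamma_k) = \varGamma_k \otimes 1 + 1 \otimes \varGamma_k$ and, by primitivity, $S^{\F_\Phi}(\varGamma_k) = -\varGamma_k$; the counit is untouched. For $E_i$ I would compute $\Ad_{\F_\Phi}\big(\Delta(E_i)\big) = \exp(\ad_W)\big(E_i \otimes 1\big) + \exp(\ad_W)\big(e^{+\hbar H_i} \otimes E_i\big)$ one summand at a time. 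Using \eqref{eq: UhgR2} in the form $[\varGamma_t, E_i] = (\delta_{t,i} - \delta_{t,i+1})\,E_i$ one finds
$$ [W,\, E_i \otimes 1] \,=\, (E_i \otimes 1)\cdot \hbar\,\big(1 \otimes T_{i,+}^{\,\Phipicc}\big) \,, \qquad [W,\, e^{+\hbar H_i} \otimes E_i] \,=\, \hbar\,\big(T_{i,-}^{\,\Phipicc} \otimes 1\big)\cdot \big(e^{+\hbar H_i} \otimes E_i\big) \,. $$
Since all the $\varGamma$'s commute with one another, $\ad_W$ acts on each of these two summands as plain right (resp.\ left) multiplication by the displayed toral factor, so $\exp(\ad_W)$ replaces $E_i \otimes 1$ by $E_i \otimes e^{+\hbar T_{i,+}^{\,\Phipicc}}$ and $e^{+\hbar H_i} \otimes E_i$ by $e^{+\hbar(H_i + T_{i,-}^{\,\Phipicc})} \otimes E_i$, which is exactly the asserted $\Delta(E_i)$; the computation for $F_i$ is entirely parallel, the sign flip coming from $[\varGamma_t, F_j] = (\delta_{t,j+1} - \delta_{t,j})\,F_j$.

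For the antipode I would first note that $(\id \otimes S)(\F_\Phi) = \exp\big((\id \otimes S)(W)\big) = \exp(-W) = \F_\Phi^{-1}$ (using $S(\varGamma_\ell) = -\varGamma_\ell$ and the commutativity of the toral part), hence $v = m\big(\F_\Phi^{-1}\big) = \exp\big(-\hbar\,2^{-1} \sum_{t,\ell}\phi_{t,\ell}\,\varGamma_t \varGamma_\ell\big) = 1$, because $\Phi$ is antisymmetric and the $\varGamma$'s commute. Thus $S^{\F_\Phi} = S$, and it remains only to verify that the expression $-\,e^{-\hbar(H_i + T_{i,-}^{\,\Phipicc})}\, E_i\, e^{-\hbar T_{i,+}^{\,\Phipicc}}$ in the statement does equal $S(E_i) = -\,e^{-\hbar H_i} E_i$: indeed $[T_{i,\pm}^{\,\Phipicc}, E_i] = 0$ and $T_{i,+}^{\,\Phipicc} + T_{i,-}^{\,\Phipicc} = 0$, both once more by antisymmetry of $\Phi$, so the two $T$-exponentials cancel, and symmetrically for $F_i$. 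Assembling these computations proves that the formulas in the statement define a topological Hopf superalgebra, which by construction is the twist deformation $\big(\Uhglnp\big)^{\F_\Phi}$. I expect the only genuinely delicate point to be bookkeeping rather than substance: handling the (ultimately trivial, but present) Koszul signs in $\Uhglnp \,\widehat{\otimes}\, \Uhglnp$, and making sure the $\hbar$-adic completions legitimise the formal-exponential manipulations; the algebraic core is routine once relations \eqref{eq: UhgR1}--\eqref{eq: UhgR2} are put to work.
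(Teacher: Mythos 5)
Your proposal is correct, and it follows the route the paper itself implicitly endorses: note that the paper gives no proof of this theorem --- it is quoted from \cite{GGP}, \S 4.2.7 with a bare \textit{qed} --- and the last sentence of the statement identifies the structure as the twist deformation $ {\Uhglnp}^{\F_\Phi} $, which is exactly the computation you carry out. Your key steps all check: since the $ \varGamma $'s are even and commute, $ \ad_W $ acts on $ E_i \otimes 1 $ and on $ e^{+\hbar H_i} \otimes E_i $ (and on the analogous terms for $ F_i $) as multiplication by $ \hbar \, \big( 1 \otimes T_{i,+}^{\,\Phipicc} \big) $, resp.\ $ \hbar \, \big( T_{i,-}^{\,\Phipicc} \otimes 1 \big) $, so exponentiating yields the stated coproducts; and the antisymmetry of $ \Phi $ gives both $ \, m \circ (\id \otimes S)(\F_\Phi) = 1 \, $ (hence $ S^{\F_\Phi} = S \, $) and $ \, [T_{i,\pm}^{\,\Phipicc}, E_i] = 0 = [T_{i,\pm}^{\,\Phipicc}, F_i] \, $, $ \, T_{i,+}^{\,\Phipicc} + T_{i,-}^{\,\Phipicc} = 0 \, $, which reconciles the displayed antipode formulas with the undeformed ones (modulo the obvious typo $ F_\ell $ for $ F_i $ in the statement).
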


\vskip9pt

\begin{definition}   \label{def: Multipar UhPhiglnp}
 We call  $ \Uhglnp $  with the Hopf structure described in  Theorem \ref{thm: Hopf-struct x Mp-UhPhiglnp}  above  \textit{the multiparametric QUESA}   --- over $ \liegl_{\,n}^{\,p} $  ---   with multiparameter  $ \Phi \, $,  \,and we denote it by  $ \UhPhiglnp \, $.   \hfill  $ \diamond $
\end{definition}

\vskip9pt

\begin{rmk}
 By its very construction, the multiparametric QUESA  $ \UhPhiglnp $  always admits a presentation (as a topological  $ \kh $--superalgebra)  as in  Definition \ref{def: form-Uhg}   --- which is independent of  $ \Phi $  --- while its Hopf structure is given as in  Theorem \ref{thm: Hopf-struct x Mp-UhPhiglnp}   --- which explicitly depends on our matrix of ``parameters''  $ \Phi \, $.  Conversely, it is proved in  \cite{GGP}  (cf.\ in particular  Proposition 4.2.8 and Theorem 4.2.9 therein) that one can also describe the same object with a different presentation.  Namely, in the latter presentation the parameters show up in the relations among generators   --- which look like those in  Definition \ref{def: form-Uhg},  but explicitly involve some extra parameters, that in turn depend on  $ \Phi $  ---   while the formulas for the coproduct, counit and antipode are independent of  $ \Phi $  (and of any other ``parameters'' whatsoever), but constantly look like those in  Theorem \ref{thm: Hopf-struct x Uhglnp}  for Yamane's initial QUESA.  Nevertheless, we are now focusing on the first type of presentation, thus choosing the point of view where the parameters govern the co-algebraic structure.
\end{rmk}

\bigskip
 \vskip13pt

\section{Formal quantum general linear supergroups}  \label{sec: mp-QFSHSA's}
 \vskip7pt
   In this section we introduce the new contribution of the present paper, namely a ``quantum formal supergroup'' of type  $ A $,  both in uniparametric and in multiparametric version, realised as dual to Yamane's QUESA  $ \Uhglnp $   --- from  \S \ref{subsec: Yamane's QUESA}  --- and to its multiparametric counterpart  $ \UhPhiglnp $   --- from  \S \ref{subsec: Yam-Mp_QUESA's}.

\vskip11pt

\subsection{QFSHSA's for Yamane's quantum general linear supergroup}  \label{subsec: QFSHSA's x Yamane}
 Let us consider Yamane's QUESA  $ \Uhglnp $  as in  \S \ref{subsec: Yamane's QUESA},  and its linear dual, denoted by  $ \Fhglnp \, $.  In Drinfeld's language   --- as in  \cite{Dr}, \S 7,  only slightly adapted to fit the (quantum) supergroup framework ---   the latter is a ``quantum formal series Hopf superalgebra.  We now undertake a detailed description of this object, eventually yielding an explicit presentation by generators and relations.

\medskip

\begin{free text}
 \textbf{The QFSHSA over  $ \textit{GL}_n^{\,p} \, $:  first construction.}
 For our QUESA  $ \Uhglnp $  there exists a well-known ``standard'' (or ``natural'', or ``vector'') representation  $ V_n $  which is straightforward quantisation of the standard representation of the classical superalgebra  $ U\big(\liegl_{\,n}^{\,p}\big) \, $.  Explicitly,  $ V_n $  is the free  $ \kh $--module  of rank  $ n $  with  $ \kh $--basis  $ \, \{ v_1 \, , \dots , v_n \} \, $  with  $ \Uhglnp $--action  defined on generators by
  $$  E_i\,.\,v_\ell \, := \, \delta_{i+1,\ell} \, v_{\ell-1} \; ,  \!\!\quad  \varGamma_k\,.\,v_\ell \, := \, \delta_{i,\ell} \, v_\ell \; ,  \!\!\quad  F_i\,.\,v_\ell \, := \, \delta_{i,\ell} \, v_{\ell+1}  \;\quad  \forall \;\; i \in I_{n-1} \, , \; k, \ell \in I_n  $$
 Indeed, one checks by straightforward computations that this yields a correctly defined representation of  $ \Uhglnp $  onto  $ V_n \, $,  which then is a  $ \Uhglnp $--module.
                                                                                       \par
   For each pair  $ \, (i\,,j\,) \in I_n \, $,  the corresponding  \textit{matrix coefficient}  $ \, x_{i{}j} \, $  is defined: it is the  $ \kh $--linear  function  $ \, x_{i{}j} : \Uhglnp \relbar\joinrel\relbar\joinrel\relbar\joinrel\longrightarrow \kh \, $  defined by  $ \, x_{i{}j}(u) := v_i^*(u\,.\,v_j) \, , \; \forall \, u \in \Uhglnp \, $,  \,where  $ \, \{ v^*_1 \, , \dots , v^*_n \} \, $  is the dual  $ \kh $--basis  of  $ V_n $  to the built-in  $ \kh $--basis  $ \, \{ v_1 \, , \dots , v_n \} \, $.
 In other words,  $ x_{i,k} $  is the composition of the representation morphism  $ \, \rho\raise-3pt\hbox{$ {}_{V_n} $} : \Uhglnp \relbar\joinrel\relbar\joinrel\longrightarrow \End_\kh\!\big( V_n \big) \cong \operatorname{Mat}_{\,n \times n}\!\big(\kh\big) \, $  with the function  $ \, \operatorname{Mat}_{\,n \times n}\!\big(\kh\big) \relbar\joinrel\relbar\joinrel\relbar\joinrel\longrightarrow \kh \, , \; {\big( m_{\,r,s} \big)}_{r \in I_n}^{s \in I_n} \mapsto m_{\,i,j} \, $  --- i.e., evaluation with respect to the  $ (i\,,j\,) $--th  entry.
                                                                                       \par
   Let  $ \, {\Uhglnp}^* \, $  be the linear dual to the Hopf superalgebra  $ \Uhglnp \, $.  This dual is in turn a topological Hopf superalgebra, the topology involved being now the $ * $--weak  one: in particular, its coproduct is dual to the product in  $ \Uhglnp \, $,  and it takes values into a suitable completion of the algebraic tensor product  $ \, {\Uhglnp}^* \otimes {\Uhglnp}^* \, $.  However, for all the  $ x_{i{}j} $'s   --- that do belong to  $ {\Uhglnp}^* $  indeed ---   it follows by construction that their coproduct obeys the remarkable formula
\begin{equation}  \label{eq: Delta(x_r,s)}
  \qquad   \Delta(x_{r{}s})  \,\; = \;\,  {\textstyle \sum_{k=1}^n}\, {(-1)}^{p_{i{}k}\,p_{k{}j}} x_{r{}k} \otimes x_{k{}s}   \qquad \qquad  \forall \;\; r, s \in I_n
\end{equation}
 --- where  $ \, p_{r{}s} := p(r) + p(s) \, $  ---   in short, because of the simple reason that the product in  $ \operatorname{Mat}_{n+1}\!\big(\kh\big) $  is nothing but row-by-column multiplication; in particular, each  $ \, \Delta(x_{i{}j}) \, $  belongs to the standard, algebraic tensor product  $ \, {\Uhglnp}^* \otimes {\Uhglnp}^* \, $.  Let us then consider the unital  $ \kh $--subalgebra  $ \, \dot{F}_\hbar = \dot{F}_\hbar\big[\textit{GL}_{\,n}^{\,p}\big] \, $  of  $ {\Uhglnp}^* $  generated by  $ \, \big\{ x_{r{}s} \,\big|\, r, s \in I_n \big\} \, $.  Thanks to  \eqref{eq: Delta(x_r,s)},  $ \dot{F}_\hbar\big[\textit{GL}_n^p\big] $  is in fact a sub-super\textsl{bi\/}\-algebra of  $ {\Uhglnp}^* $,  and even in ``standard'' (i.e., non-topological) sense; in particular, its counit map  $ \, \epsilon := \epsilon_{{U_\hbar(\liegl_{\,n}^{\,p})}^*}{\Big|}_{\dot{F}_\hbar} \, $  is described by  $ \, \epsilon(x_{i{}j}) = \delta_{i{}j} \, $  for all  $ \, i \, , j \in I_n \, $.
                                                                                       \par
   Let now  $ \, I_{\dot{F}_\hbar} := \Ker(\epsilon) + \hbar\,\dot{F}_\hbar \, $.  This is a two-sided ideal of  $ \dot{F}_\hbar \, $,  hence we can consider the associated  $ I_{\dot{F}_\hbar} $--adic  topology in  $ \dot{F}_\hbar \, $.  Indeed, such a topology coincides with the one induced by the  $ * $--weak  topology in  $ {\Uhglnp}^* \, $:  the the  $ I_{\dot{F}_\hbar} $--adic  completion of  $ \dot{F}_\hbar $  coincides with the closure of  $ \dot{F}_\hbar $  itself in  $ {\Uhglnp}^* $  with respect to the  $ * $--weak  topology: we denote this closure by  $ \, F_\hbar = \Fhglnp \, $.  Note that  $ F_\hbar $  is clearly a sub-super\textsl{bi\/}algebra of  $ {U\big(\liegl_{\,n}^{\,p}\big)}^* \, $;  \,moreover, by construction the semiclassical limit of $ F_h $  is the  $ \dot{I} $--adic  completion  $ F $  of the subalgebra  $ \dot{F} $  of  $ {\Uhglnp}^* $  generated by the ``classical'' matrix coefficient functions  $ x_{i,j} $  (for all  $ \, i, j \in I_n \, $),  where  $ \, \dot{I} := \Ker\big( \epsilon_{\!{}_F} \big) \, $.  But classically we have  $ \, F = {U\big(\liegl_{\,n}^{\,p}\big)}^* \, $,  whence we conclude that  $ \, F_\hbar := \Fhglnp = {\Uhglnp}^* \, $  as well.  Thus, in the end, the (topological) Hopf superalgebra  $ {\Uhglnp}^* $  can be realised as topologically generated by the  $ x_{i,j} $'s;  we will presently improve this remark by providing a concrete, explicit presentation.  We begin by fixing the terminology:
\end{free text}

\vskip7pt

\begin{definition}  \label{def: QFSHSA-h}
 We call  \textit{quantum formal series Hopf superalgebra\/}  (=QFSHSA in short) over the supergroup  $ \textit{GL}_{\,n}^{\,p} $  the $ I_{\dot{F}_\hbar} $--adic  completion of  $ \dot{F}_\hbar $   --- or, equivalently (see above), the closure of  $ \dot{F}_\hbar $  in  $ {\Uhglnp}^* $  with respect to the  $ * $--weak  topology.
                                                                            \par
   We denote this QFSHSA by  $ \, \Fhglnp \, $.   \hfill  $ \diamond $
\end{definition}

\vskip7pt

   The previous construction of  $ \, \Fhglnp \, $  is certainly a bit hasty, with several details to be filled in in order to clarify all steps.  However, we will now present instead an alternative construction, which leads to a concrete realisation of  $ F_\hbar $  that is in fact totally independent of the above constructions.

\medskip

\begin{free text}  \label{free: constr.-FhG}
 \textbf{New construction of  $ \, \Fhglnp \, $.}
 We proceed in several steps, simultaneously building both the topological superbialgebra  $ F_\hbar $  and its pairing with  $ \Uhglnp \, $.
 \vskip7pt
   \textbf{\textit{(1)}}\;  We begin with the free unital  $ \kh $--superalgebra  $ \mathbb{F}_\hbar $  with generators  $ \, x_{i{}j} \, $   --- for all  $ \, i \, , j \in I_n \, $  ---   having parity  $ \, |x_{i{}j}| = p_{i{}j} := p(i) + p(j) \, $   --- notation as in  Definition \ref{def: Yamane-Uhg}.  This  $ \mathbb{F}_\hbar $  admits a unique superbialgebra structure whose coproduct and counit are given on generators by the following formulas (the proof is trivial):
\begin{equation}  \label{eq: coprod+counit for x(i,j)}
  \Delta(x_{ij}) = {\textstyle \sum\limits_{a=1}^n} \, {(-1)}^{p_{i{}a} p_{a{}j}} x_{i{}a} \otimes x_{a{}j}   \quad ,  \qquad
      \epsilon(x_{ij}) = \delta_{ij}
   \qquad \quad  \forall \;\; i \, , j \in I_n   \quad
\end{equation}
   \indent   On the other hand, let  $ \mathbb{U}_\hbar $  be the free unital  $ \kh $--superalgebra  generated by  $ \, E_i \, , \varGamma_t \, , F_i \, $   --- for all  $ \, i \in I_{n-1} \, $  and  $ \, t \in I_n \, $  ---   with parity  $ \, \big|E_i\big| = p_{i,\,i+1} = p_{i+1,\,i} = \big|F_i\big| \, $  and  $ \, \big|\varGamma_t\big| = \zero \, $.  This  $ \mathbb{U}_\hbar $  is free as a  $ \kh $--module,  with basis given by all ordered monomials in the generators.  Let now  $ \, \hat{\mathbb{U}}_\hbar \, $  be the  $ \hbar $--adic  completion of  $ \mathbb{U}_\hbar \, $,  with its natural structure of Complete topological) unital  $ \kh $--superalgebra:  as such, it is freely generated by  $ \, E_i \, , \varGamma_t \, , F_i \, $  (for  $ \, i \in I_{n-1} \, $  and  $ \, t \in I_n \, $).  Then the formulas in  Theorem \ref{thm: Hopf-struct x Uhglnp}  yield well-defined coproduct and counit  $ \hat{\mathbb{U}}_\hbar $  that make the latter into a topological  super\textit{bi\/}algebra  (again, the proof is trivial).
 \vskip7pt
   \textbf{\textit{(2)}}\;  There exists a unique  $ \kh $--bilinear  pairing  $ \; \langle\,\ ,\ \rangle : \mathbb{F}_h \times \hat{\mathbb{U}}_\hbar \relbar\joinrel\longrightarrow \kh \; $  which is defined as follows.  For each generator  $ x_{i{}j} $  of  $ \mathbb{F}_\hbar \, $,  we define the values of  $ \, \big\langle x_{i{}j} \, , \mathcal{M} \big\rangle \, $  for every monomial  $ \mathcal{M} $  in the generators of  $ \hat{\mathbb{U}}_\hbar \, $:  as these monomials form a  $ \kh $--basis  of  $ \hat{\mathbb{U}}_\hbar \, $,  this is enough to uniquely define, by  $ \kh $--linear  extension, all values of  $ \, \big\langle x_{i{}j} \, , \,-\, \big\rangle \, $  on all of  $ \hat{\mathbb{U}}_\hbar \, $.  Moreover, as the  $ x_{i{}j} $'s  are  \textsl{free\/}  generators of  $ \mathbb{F}_h $  as a $ \kh $--algebra,  we can choose those values  $ \, \big\langle x_{i,j} \, , \mathcal{M} \big\rangle \, $  as freely as possible.  We set
\begin{equation}  \label{eq: values for pairing degree(M)=0}
   \big\langle\, x_{i{}j} \, , 1 \big\rangle  \; := \;  \epsilon\big(x_{i{}j}\big)  \; := \;  \delta_{i{}j}  \\
\end{equation}
 which accounts for the unique ``monomial''  $ \, \mathcal{M} = 1 \, $  having degree 0, and then
\begin{equation}  \label{eq: values for pairing degree(M)=1}
   \big\langle x_{i{}j} \, , E_t \big\rangle  \; = \;  \delta_{i,\,t} \, \delta_{j,\,t+1} \;\; ,  \quad
\big\langle x_{i{}j} \, , F_t \big\rangle  \; = \,  \delta_{i,\,t+1} \, \delta_{j,\,t} \;\; ,  \quad  \big\langle x_{i{}j} \, , \varGamma_k \big\rangle  \; = \;  \delta_{i,\,k} \, \delta_{j,\,k}
\end{equation}
 which accounts for all possible monomials  $ \mathcal{M} $  of degree 1.  As to monomials  $ \mathcal{M} $  of degree greater than 1, we define  $ \, \big\langle x_{i{}j} \, , \mathcal{M} \big\rangle \, $  by induction on such a degree via
\begin{equation}  \label{eq: values for pairing degree(M)>1}
   \big\langle x_{i{}j} \, , \mathcal{M} \,\big\rangle  \; = \;  \big\langle x_{i{}j} \, , \mathcal{Y} \, \mathcal{Z} \,\big\rangle  \; := \;  \big\langle \Delta\big(x_{i{}j}\big) \, , \mathcal{Y} \otimes \mathcal{Z} \,\big\rangle  \\
\end{equation}
 for any possible factorisation of  $ \mathcal{M} $  as a product  $ \, \mathcal{M} = \mathcal{Y} \, \mathcal{Z} \, $  of two monomials $ \mathcal{Y} $  and  $ \mathcal{Z} $  of strictly smaller degree, where in the rightmost term we are using the formula
\begin{equation}  \label{eq: pairing with coproduct}
  \big\langle \Delta(X) \, , \mathcal{Y} \otimes \mathcal{Z} \,\big\rangle  \; := \;  {(-1)}^{|X_{(2)}| \, |\mathcal{Y}|} \, \big\langle X_{(1)} \, , \mathcal{Y} \,\big\rangle \, \big\langle X_{(2)} \, , \mathcal{Z} \,\big\rangle
\end{equation}
 where  $ \; \Delta(X) = X_{(1)} \otimes X_{(2)} \; $  is the standard Sweedler's notation for the coproduct of  $ X \, $   --- now applied to  $ \, X := x_{i,j} \, $.  Due to the coassociativity of the coproduct  $ \Delta $  of  $ \mathbb{F}_\hbar \, $,  formula  \eqref{eq: values for pairing degree(M)>1}  is independent of any particular factorisation  $ \, \mathcal{M} = \mathcal{Y} \, \mathcal{Z} \, $,  \,hence in particular it does make sense to properly define  $ \, \big\langle x_{i{}j} \, , \mathcal{M} \,\big\rangle \, $.
                                                                                                   \par
   Now, with  \eqref{eq: values for pairing degree(M)=0},  \eqref{eq: values for pairing degree(M)=1}  and  \eqref{eq: values for pairing degree(M)>1}  we have in fact defined a linear operator  $ \, \rho\big(x_{i{}j}\big) \in {\hat{\mathbb{U}}}_\hbar^{\,*} \, $  for each generator  $ x_{i{}j} $  of  $ \mathbb{F}_h \, $:  as these are  \textsl{free\/}  generators, the above also extends to yield a well-defined morphism  $ \; \rho : \mathbb{F}_h \!\relbar\joinrel\longrightarrow {\hat{\mathbb{U}}}_\hbar^{\,*} \; $  of unital  $ \kh $--algebras.  In detail, this means that one has (for all  $ \, f', f'' \in \mathbb{F}_h \, $,  $ \, u \in \hat{\mathbb{U}}_\hbar \, $)
\begin{equation}  \label{eq: rho-product}
\begin{aligned}
  \big(\rho\big(\, f' f'' \,\big)\big)(u)  \;  &  = \;  \big(\, \rho\big(\,f'\,\big) \star \rho\big(\,f''\,\big) \big)(u)  \; = \;  \big(\, \rho\big(\,f'\,\big) \otimes \rho\big(\,f''\,\big) \big) \big(\Delta(u)\big)  \; =  \\
    &  = \;  {(-1)}^{|f''| \, |u_{(1)}|} \, \big(\, \rho\big(\,f'\,\big) \big)\!\big( u_{(1)} \big) \cdot \big(\, \rho\big(\,f''\,\big) \big)\!\big( u_{(2)} \big)
\end{aligned}
\end{equation}
 just because of the very definition of the product  $ \, \star \, $  (``convolution'') in  $ {\hat{\mathbb{U}}}_\hbar^{\,*} \, $.
                                                                                                   \par
   In turn, the map  $ \; \rho : \mathbb{F}_h \!\relbar\joinrel\longrightarrow {\hat{\mathbb{U}}}_\hbar^{\,*} \; $  uniquely corresponds to a  $ \kh $--bilinear  pairing
\begin{equation}  \label{eq: pairing BbbF-hatBbbU}
   \langle\,\ ,\ \rangle : \mathbb{F}_h \times \hat{\mathbb{U}}_\hbar \relbar\joinrel\relbar\joinrel\relbar\joinrel\longrightarrow \kh  \quad ,  \qquad  \big\langle\, f \, , u \,\big\rangle \, := \, \big(\,\rho(f)\big)(u)
\end{equation}
 which is uniquely characterised (and described) by formulas  \eqref{eq: values for pairing degree(M)=0},  \eqref{eq: values for pairing degree(M)=1}  and  \eqref{eq: values for pairing degree(M)>1}.  Furthermore, those same formulas guarantee that this pairing is also a  \textsl{(super) coalgebra--algebra pairing},  in that the following holds (for all  $ \, f \in \mathbb{F}_h \, $,  $ \, u' , u'' \in \hat{\mathbb{U}}_\hbar \, $):
\begin{equation}  \label{eq: pairing coproduct-product}
  \big\langle\, f \, , \, u' \, u'' \,\big\rangle  \; = \;  \big\langle \Delta(f) \, , u' \otimes u'' \,\big\rangle  \; := \;  {(-1)}^{|f_{(2)}| \, |u'|} \, \big\langle\, f_{(1)} \, , u' \,\big\rangle \, \big\langle\, f_{(2)} \, , u'' \,\big\rangle
\end{equation}
   \indent   On the other hand, symmetrically, formula  \eqref{eq: rho-product}  can be read also
 as
\begin{equation}  \label{eq: pairing product-coproduct}
  \big\langle\, f' f'' , \, u \,\big\rangle  \; = \;  \big\langle\, f' \otimes f'' , \Delta(u) \,\big\rangle  \; := \;  {(-1)}^{|f''| \, |u_{(1)}|} \, \big\langle\, f' , u_{(1)} \,\big\rangle \, \big\langle\, f'' , u_{(2)} \,\big\rangle
\end{equation}
 which means that this pairing is also a  \textsl{(super) algebra--coalgebra pairing}.  To sum up,  \eqref{eq: pairing coproduct-product}  and  \eqref{eq: pairing product-coproduct},  along with the unital/counital properties
\begin{equation}  \label{eq: pairing unit/counit-counit/unit}
  \big\langle\, f \, , \, 1 \,\big\rangle  \; = \;  \epsilon(f)  \quad ,  \qquad
  \big\langle\, 1 \, , \, u \,\big\rangle  \; = \;  \epsilon(u)   \qquad \qquad  \forall \;\; f \in \mathbb{F}_h \, , \; u \in \hat{\mathbb{U}}_\hbar
\end{equation}
 tell us that the pairing in  \eqref{eq: pairing BbbF-hatBbbU}  is actually a pairing  \textsl{of (topological)  $ \kh $--super\textit{bi}al\-gebras},  or  \textsl{a (topological)  $ \kh $--super\textit{bi}algebra  pairing}.
                                                                                                   \par
   Finally, via a canonical recipe, the pairing  \eqref{eq: pairing BbbF-hatBbbU}  also induces a similar pairing
\begin{equation}  \label{eq: pairing tensor-product}
  \langle\,\ ,\ \rangle : \big(\, \mathbb{F}_h \otimes \mathbb{F}_h \big) \times \big(\, \hat{\mathbb{U}}_\hbar \otimes \hat{\mathbb{U}}_\hbar \big) \relbar\joinrel\relbar\joinrel\relbar\joinrel\relbar\joinrel\longrightarrow \kh
\end{equation}
 given by
  $ \; \big\langle\, f' \otimes f'' , u' \otimes u'' \,\big\rangle \, := \, {(-1)}^{|f''| \, |u'|} \big\langle\, f' , u' \,\big\rangle \, \big\langle\, f' , u' \,\big\rangle \; $.
%
%
 \vskip7pt
   \textbf{\textit{(3)}}\;  We now consider the (super)bi-ideal  $ \, \mathbb{I}_\hbar := \Ker\big(\epsilon_{\!{}_{\mathbb{F}_\hbar}}\big) + \hbar\,\mathbb{F}_\hbar \, $,  and the  $ \mathbb{I}_\hbar $--adic  completion of  $ \mathbb{F}_\hbar \, $,  denoted by  $ \, \tilde{\mathbb{F}}_\hbar \, $:  \,by construction, this is a topological  $ \kh $--super\textit{bi\/}algebra,  whose coproduct takes values in  $ \, \tilde{\mathbb{F}}_\hbar \,\widetilde{\otimes}\, \tilde{\mathbb{F}}_\hbar \, $,  the  $ \mathbb{I}_\hbar^\otimes $--adic  completion of the algebraic tensor product  $ \, \tilde{\mathbb{F}}_\hbar \otimes \tilde{\mathbb{F}}_\hbar \, $  with $ \, \mathbb{I}_\hbar^\otimes := \mathbb{F}_\hbar \otimes \mathbb{I}_\hbar + \mathbb{I}_\hbar \otimes \mathbb{F}_\hbar \, $.  One then easily sees that the pairing  \eqref{eq: pairing BbbF-hatBbbU}  extends   --- by continuity  ---   to a similar pairing
\begin{equation}  \label{eq: pairing hatBbbF-hatBbbU}
   \langle\,\ ,\ \rangle : \tilde{\mathbb{F}}_h \times \hat{\mathbb{U}}_\hbar \relbar\joinrel\relbar\joinrel\relbar\joinrel\longrightarrow \kh
\end{equation}
 which is in turn a  $ \kh $--linear  (topological)  \textsl{superbialgebra pairing\/}  on its own.
 \vskip7pt
   \textbf{\textit{(4)}}\;  Let  $ \, \mathcal{R} $  be the  $ \hbar $--adically  closed, two-sided ideal in  $ \hat{\mathbb{U}}_\hbar $  generated by all the elements that appear as left-hand side terms in the identities  \eqref{eq: UhgR1}--\eqref{eq:UqgR7}.  Then the quotient topological  $ \kh $--superalgebra  $ \, \hat{\mathbb{U}}_\hbar \Big/ \mathcal{R} \, $  is isomorphic to  $ \Uhglnp \, $,  just by the very definition of the latter.  We can then identify  $ \; \hat{\mathbb{U}}_\hbar \Big/ \mathcal{R} \, = \, \Uhglnp \, $,  \;and through this we get the following claim:
 \vskip3pt
   \textsl{$ \underline{\text{Claim}} $:}  \textit{The pairing  \eqref{eq: pairing hatBbbF-hatBbbU}  factors (on the right) through  $ \mathcal{R} \, $,  thus inducing a similar pairing of (topological) superbialgebras}
\begin{equation}  \label{eq: pairing tildeBbbF-Uhglnp}
   \langle\,\ ,\ \rangle : \tilde{\mathbb{F}}_h \times \Uhglnp \relbar\joinrel\relbar\joinrel\relbar\joinrel\longrightarrow \kh
\end{equation}
 \vskip3pt
   To see that, since  $ \tilde{\mathbb{F}}_h $  is generated (as a unital, topological algebra) by the  $ x_{i,\,j} $'s,  it is enough to show that:
 \vskip3pt
   \textbf{\textit{(4--a)}}\;  $ \, \big\langle\, 1 \, , \rho \,\big\rangle = 0 \, $  and  $ \, \big\langle\, x_{i,j} \, , \rho \,\big\rangle = 0 \, $  for every  $ \rho $  in the selected set of generators of the ideal  $ \, \mathcal{R} $  and for all  $ \, i \, , j \in I_n \, $;
 \vskip3pt
   \textbf{\textit{(4--b)}}\;  the two-sided ideal  $ \, \mathcal{R} $  is in fact a coideal as well, which boils down to having  $ \; \Delta(\rho\hskip0,5pt) \in \, \mathcal{R} \otimes \hat{\mathbb{U}}_\hbar + \hat{\mathbb{U}}_\hbar \otimes \mathcal{R} \; $  for every  $ \rho $  in our fixed set of generators of  $ \, \mathcal{R} \, $.
 \vskip7pt
   Proving these two points is a matter of sheer computations, though non-trivial ones.  The interested reader find the details in  Appendix \ref{appendix}.
 \vskip9pt
   \textbf{\textit{(5)}}\;  Let  $ \, \J $  be the closed, two-sided ideal in  $ \tilde{\mathbb{F}}_\hbar $  generated (in topological sense) by the following elements
\begin{equation}  \label{eq: gen.'s-ideal-J-in-Fh}
  \begin{gathered}
   \hskip69pt   x_{i{}j}^{\,2}   \hskip169pt \qquad  \text{\ for \ \ }  p_{i{}j} = \one  \cr
   x_{i{}j} \, x_{i{}k} \, - \, {(-1)}^{p_{i{}j} p_{i{}k}} \, e^{+\hbar \, {(-1)}^{p(i)}} \, x_{i{}k} \, x_{i{}j}  \hskip75pt \qquad  \text{for \ \ } j < k
  \cr
   x_{i{}j} \, x_{h{}j} \, - \, {(-1)}^{p_{i{}j} p_{h{}j}} \, e^{+\hbar \, {(-1)}^{p(j)}} \, x_{h{}j} \, x_{i{}j}  \hskip71pt \qquad  \text{for \ \ } i<h
  \cr
   \qquad \quad   x_{i{}j} \, x_{h{}k} \, - \, {(-1)}^{p_{i{}j} p_{h{}k}} x_{h{}k} \, x_{i{}j}  \qquad \hskip51pt \qquad \  \text{for \ \ }  i<h \, , \; j>k   \hskip15pt  \cr
   x_{i{}j} \, x_{h{}k} \, - \, {(-1)}^{p_{i{}j} p_{h{}k}} x_{h{}k} \, x_{i{}j} \, - \, {(-1)}^{p_{i{}j} p_{i{}k}} \Big( e^{+\hbar \, {(-1)}^{p(i)}} \! - e^{-\hbar \, {(-1)}^{p(i)}} \Big) \, x_{i{}k} \, x_{h{}j}   \hskip5pt
  \\
   \hfill   \text{for \ \ }  i<h \, , \; j<k   \hskip15pt
  \end{gathered}
\end{equation}
 and set  $ \, \Fhglnp := \tilde{\mathbb{F}}_\hbar \Big/ \J \, $  for the quotient topological  $ \kh $--superalgebra.
 \vskip3pt
   \textsl{$ \underline{\text{Note}} $:}\,  the last-line elements can be written in a slightly different form, namely
  $$  x_{i{}j} \, x_{h{}k} \, - \, {(-1)}^{p_{i{}j} p_{h{}k}} x_{h{}k} \, x_{i{}j} \, - \,
 {(-1)}^{p_{i{}j} p_{i{}k} + p(i)} \big(\, e^{+\hbar} - e^{-\hbar} \,\big) \, x_{i{}k} \, x_{h{}j}  $$
 (for  $ \, i<h \, $  and  $ \, j<k \, $):  we shall indeed use such an alternative form in the sequel.
 \vskip3pt
   We aim now to prove the following
 \vskip5pt
   \textsl{$ \underline{\text{Claim}} $:}  \textit{The pairing  \eqref{eq: pairing tildeBbbF-Uhglnp}  factors (on the left) through  $ \J \, $,  thus inducing a similar pairing of (topological) superbialgebras}
\begin{equation}  \label{eq: pairing Fhglnp-Uhglnp}
   \langle\,\ ,\ \rangle : \Fhglnp \times \Uhglnp \relbar\joinrel\relbar\joinrel\relbar\joinrel\relbar\joinrel\relbar\joinrel\longrightarrow \kh
\end{equation}
 \vskip3pt
   To see that, it is enough to show that:
 \vskip3pt
   \textbf{\textit{(5--a)}}\;  $ \, \big\langle\, \eta \, , 1 \,\big\rangle = 0 \, $  and  $ \, \big\langle\, \eta \, , \gamma \,\big\rangle = 0 \, $  for every  $ \eta $  in the selected set  \eqref{eq: gen.'s-ideal-J-in-Fh}  of generators of the ideal  $ \, \J $  and for all  $ \, \gamma \in \big\{ E_r \, , \varGamma_s \, , F_r \;\big|\; r \in I_{n-1} \, , \, s \in I_n \big\} \, $;
 \vskip3pt
   \textbf{\textit{(5--b)}}\;  the two-sided ideal  $ \, \J $  is in fact a coideal as well, which boils down to having  $ \; \Delta(\eta\hskip0,5pt) \in \, \J \otimes \tilde{\mathbb{F}}_\hbar + \tilde{\mathbb{F}}_\hbar \otimes \J \; $  for every  $ \eta $  in our given set of generators of  $ \, \J \, $.
 \vskip7pt
   Once again, the proof of the above two points is just a matter of computations, although non-trivial indeed; the interested reader will find them in  Appendix \ref{appendix}.
\end{free text}

\vskip11pt

   We are now ready to recap the payout of the previous construction.

\vskip17pt

\begin{theorem}  \label{thm: pres-Fhg}
 There exists a unique topological  $ \kh $--superbialgebra  $ \Fhglnp $  enjoying the following properties:
 \vskip5pt
   (a)\;  it is topologically generated by  $ \, \big\{\, x_{i{}j} \,\big|\, i \, , j \in\! I_n \big\} \, $,
   with parity
 $ \, |x_{i{}j}| = p_{i{}j} := p(i)+p(j) \, $,  \,subject to the following relations, where  $ \, q := e^\hbar \, $,  $ \, q_s := q^{{(-1)}^{p(s)}} \, $:
%
%
  $$  \displaylines{
   \hskip35pt   x_{i{}j}^2  \; = \;  0   \hskip185pt \qquad  \text{\ for \ \ }  p_{i{}j} = \one  \cr
   x_{i{}j} \, x_{i{}k}  \; = \;  {(-1)}^{p_{i{}j} p_{i{}k}} q_i \, x_{i{}k} \, x_{i{}j}  \hskip125pt \qquad  \text{for \ \ } j < k  \cr
   x_{i{}j} \, x_{h{}j}  \; = \;  {(-1)}^{p_{i{}j} p_{h{}j}} q_j \, x_{h{}j} \, x_{i{}j}  \hskip125pt \qquad  \text{for \ \ } i<h  \cr
   \qquad \hskip19pt   x_{i{}j} \, x_{h{}k}  \; = \;  {(-1)}^{p_{i{}j} p_{h{}k}} x_{k{}h} \, x_{i{}j}  \hskip119pt \qquad  \text{for \ \ }  i < h \, , \, j > k   \hskip61pt  \cr
   \hskip3pt   x_{i{}j} \, x_{h{}k}  \; = \;  {(-1)}^{p_{i{}j} p_{h{}k}} x_{h{}k} \, x_{i{}j} \, + {(-1)}^{{p_{i{}j} p_{i{}k}}} \big( q_i^{+1} - q_i^{-1} \big) \, x_{i{}k} \, x_{h{}j}   \hskip11pt  \text{\;\;\ \ for \ \ } i < h \, , \, j < k  }  $$
 \vskip5pt
   (b)\;  it is complete with respect to the  $ I_\hbar $--adic  relation,  where  $ I_\hbar $  is the two-sided ideal of  $ \Fhglnp $  generated by the set  $ \, \big\{ x_{i{}j} - \delta_{i{}j} \,\big|\, i , j \in I_n \big\} \cup \big\{\, \hbar \, 1_{F_\hbar[[\textit{GL}_n^p]]} \,\big\} \, $;
 \vskip5pt
   (c)\;  its coproduct and counit are defined, in terms of the above presentation, by
\begin{equation}  \label{eq: coprod+counit for x(i,j) - BIS}
  \Delta(x_{ij}) = {\textstyle \sum\limits_{a=1}^n} \, {(-1)}^{p_{i{}a} p_{a{}j}} x_{i{}a} \otimes x_{a{}j}   \quad ,  \qquad
      \epsilon(x_{i{}j}) = \delta_{i{}j}
   \qquad \quad  \forall \;\; i \, , j \in I_n   \quad
\end{equation}
%
%
 \vskip5pt
   (d)\;  it admits a non-degenerate pairing of (topological)\/  $ \kh $--superbialgebras
  $$  \langle\,\ ,\ \rangle : \Fhglnp \times \Uhglnp \relbar\joinrel\relbar\joinrel\relbar\joinrel\longrightarrow \kh  $$
 given on generators   --- $ \, \forall \, i \, , j \, , k \in I_n \, $,  $ \, t \in I_{n-1} \, $  ---   by
\begin{equation}  \label{eq: pairing-formulas}
  \big\langle x_{i{}j} \, , E_t \big\rangle  \; = \;  \delta_{i,t} \, \delta_{j,t+1} \;\; ,  \quad
\big\langle x_{i{}j} \, , F_t \big\rangle  \; = \,  \delta_{i,t+1} \, \delta_{j,t} \;\; ,  \quad  \big\langle x_{i{}j} \, , \varGamma_k \big\rangle  \; = \;  \delta_{i,k} \, \delta_{j,k}
\end{equation}
 \vskip5pt
   (e)\;  through the pairing in (d) above, it is isomorphic (as a topological\/  $ \kh $--superbialgebra)  to  $ \, {\Uhglnp}^* $,  \,the linear dual to  $ \Uhglnp \, $.  In particular,  $ \Fhglnp $  itself is a  \textsl{(topological) Hopf superalgebra}  over\/  $ \kh \, $.
\end{theorem}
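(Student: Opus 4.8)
\emph{Proof strategy.}\ The plan is to take for $\Fhglnp$ the object produced in the ``New construction'' above, namely the topological $\kh$--superbialgebra $\,\tilde{\mathbb{F}}_\hbar\big/\J\,$ together with the superbialgebra pairing \eqref{eq: pairing Fhglnp-Uhglnp} with $\Uhglnp$ — which Steps \textit{(1)}--\textit{(5)} already certify to be a well-defined pairing of topological $\kh$--superbialgebras — and then to verify (a)--(e) for this object one by one. Properties (a), (b), (c) are essentially read off from the construction, so the real substance lies in the non-degeneracy asserted in (d) and in the identification (e).

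First I would dispose of (a)--(c). For (a): the $x_{i{}j}$'s topologically generate $\,\tilde{\mathbb{F}}_\hbar/\J\,$ by definition, and, substituting $\,q_s=q^{{(-1)}^{p(s)}}=e^{{(-1)}^{p(s)}\hbar}\,$ and clearing factors, the displayed identities become precisely the vanishing of the generators \eqref{eq: gen.'s-ideal-J-in-Fh} of $\J$; as $\J$ is topologically generated by exactly those elements, (a) together with (b) amounts to a presentation. For (b): $\,\tilde{\mathbb{F}}_\hbar\,$ is $\mathbb{I}_\hbar$--adically complete and $\J$ is closed, so $\Fhglnp$ is complete for the quotient topology, which is the $I_\hbar$--adic one with $\,I_\hbar=\big(\mathbb{I}_\hbar+\J\big)/\J\,$; since $\,\Ker\big(\epsilon_{\mathbb{F}_\hbar}\big)\,$ is generated in the free algebra $\mathbb{F}_\hbar$ by the $\,x_{i{}j}-\delta_{i{}j}\,$, the ideal $I_\hbar$ is generated by $\,\big\{x_{i{}j}-\delta_{i{}j}\big\}\cup\{\hbar\,1\}\,$. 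For (c): the coproduct and counit of $\,\tilde{\mathbb{F}}_\hbar\,$ act on the $x_{i{}j}$'s by \eqref{eq: coprod+counit for x(i,j)}; the counit descends because $\,\J\subseteq\Ker(\epsilon)\,$ (part of \textit{(5--a)}) and the coproduct descends because $\J$ is a coideal (point \textit{(5--b)}). Finally the pairing in (d) is \eqref{eq: pairing Fhglnp-Uhglnp}, with values on generators as in \eqref{eq: pairing-formulas} by \eqref{eq: values for pairing degree(M)=1}.

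The core of the argument is the non-degeneracy of \eqref{eq: pairing Fhglnp-Uhglnp}, and here I would route through a PBW-type description of $\Fhglnp$ and the semiclassical limit. Fixing a total order on the index pairs $(i,j)$, the relations in (a) form a confluent straightening system (of the usual quantum-matrix type, with $\,x_{i{}j}^2=0\,$ when $\,p_{i{}j}=\one\,$), so every product of $x_{i{}j}$'s rewrites as a finite $\kh$--linear combination of ordered monomials of the same total degree; taking $I_\hbar$--adic closures, these ordered monomials topologically span $\Fhglnp$. Next I would reduce everything modulo $\hbar$: the whole construction specialises at $\,\hbar=0\,$ to $\,\Uhglnp/\hbar\,\Uhglnp\cong U\big(\liegl_{\,n}^{\,p}\big)\,$, to the classical $I$--adically completed matrix-coefficient bialgebra $F$ inside $U\big(\liegl_{\,n}^{\,p}\big)^*$ — which by the first construction equals $U\big(\liegl_{\,n}^{\,p}\big)^*$ — and to the canonical nondegenerate evaluation duality between the two. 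The reductions modulo $\hbar$ of the ordered monomials above form a topological $\k$--basis of $F$, hence are $\k$--linearly independent; by $\hbar$--adic completeness the ordered monomials are then a topological $\kh$--basis of $\Fhglnp$, which is in particular $\hbar$--torsion free with $\,\Fhglnp/\hbar\,\Fhglnp\cong F\,$ (this is the PBW theorem for $\Fhglnp$). With such a basis on the $\Fhglnp$--side and a PBW basis on the $\Uhglnp$--side, the pairing matrix reduces modulo $\hbar$ to the classical pairing matrix, which is invertible (the classical pairing being perfect); a Nakayama-type argument using $\hbar$--adic completeness and $\hbar$--torsion freeness then yields non-degeneracy of \eqref{eq: pairing Fhglnp-Uhglnp} on both sides, settling (d).

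Finally, for (e): non-degeneracy on the $\Fhglnp$--side provides an injective continuous morphism of topological superbialgebras $\,\rho:\Fhglnp\hookrightarrow\Uhglnp^*\,$ (for the $*$--weak topology), which modulo $\hbar$ is the classical isomorphism $\,F\xrightarrow{\ \sim\ }U\big(\liegl_{\,n}^{\,p}\big)^*\,$. Since $\,\Uhglnp^*\,$ is $\hbar$--torsion free and $\hbar$--adically complete with $\,\Uhglnp^*/\hbar\,\Uhglnp^*\cong U\big(\liegl_{\,n}^{\,p}\big)^*\,$ (a standard property of the $\kh$--dual of a topologically free $\kh$--module), and $\Fhglnp$ is $\hbar$--torsion free, the map $\rho$ is surjective as well, hence an isomorphism; transporting along it the Hopf structure of $\,\Uhglnp^*\,$ — in particular the antipode, dual to that of $\Uhglnp$ — makes $\Fhglnp$ a topological Hopf superalgebra. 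Uniqueness then follows, since (a)--(c) present $\Fhglnp$ as the $I_\hbar$--adic completion of the superbialgebra on the $x_{i{}j}$'s modulo the listed relations, so any two objects with properties (a)--(c) are canonically isomorphic and automatically share (d)--(e). The hard part will be the ``lower bound'' half of the PBW theorem for $\Fhglnp$ — the $\kh$--linear independence of the ordered monomials — together with the sign bookkeeping and the tracking of powers of $\hbar$ needed to identify the semiclassical limit precisely; some care is also needed in juggling the $\hbar$--adic and the $I_\hbar$--adic (equivalently $*$--weak) topologies in the Nakayama-type steps.
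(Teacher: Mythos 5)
Your proposal is correct and follows essentially the same route as the paper: take the object $\tilde{\mathbb{F}}_\hbar\big/\J$ from \S\ref{free: constr.-FhG}, read off (a)--(c) from the construction, then identify the semiclassical limit $\Fhglnp\big/\hbar\,\Fhglnp$ with $F\big[\big[\textit{GL}_{\,n}^{\,p}\big]\big]={U\big(\liegl_{\,n}^{\,p}\big)}^*$ and lift non-degeneracy and the identification with ${\Uhglnp}^*$ by an $\hbar$--adic approximation (Nakayama-type) argument. The only organisational difference is that you inline a PBW-basis step to get non-degeneracy, whereas the paper argues directly from the presentation of the semiclassical limit and defers the PBW statement to Theorem \ref{thm: PBW x Fhglnp}, whose proof uses exactly the mod-$\hbar$ reduction you sketch.
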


\begin{proof}
 The construction in  \S \ref{free: constr.-FhG}  provides us with a topological  $ \kh $--superbialgebra  $ \Fhglnp $  which, indeed, do fulfill all requirements of claims  \textit{(a)\/}  through  \textit{(d)\/} above,  \textsl{but\/}  for the non-degeneracy of the pairing in  \textit{(d)}.  As to this last property, as well as to claim  \textit{(e)},  we still need some extra work.
                                                              \par
   To begin with, when passing to the ``semiclassical limit'', we have that
  $$  F_0\big[\big[\textit{GL}_{\,n}^{\,p}\big]\big]  \; := \;  \Fhglnp \Big/ \hbar \, \Fhglnp  $$
 is a  \textsl{commutative\/}  topological  $ \k $--superbialgebra  which is generated by the cosets of the  $ x_{i{}j} $'s   --- that we still denote by the same symbols, slightly abusing notation ---   having parity  $ \, |x_{i{}j}| = p_{i{}j} \, $.  Moreover,  $ F_0\big[\big[\textit{GL}_{\,n}^{\,p}\big]\big] $  is complete with respect to the ideal generated by all the  $ (x_{i{}j} - \delta_{i{}j}) $'s,  and it has (super)coalgebra structure described as in  \eqref{eq: coprod+counit for x(i,j) - BIS}  again.  Finally, the  $ \kh $--bilinear  pairing
%
%
 $ \, \langle\,\ ,\ \rangle \, $
 in  \textit{(d)\/}  between  $ \Fhglnp $  and  $ \Uhglnp $  induces   --- just by modding out  $ \hbar $  ---   a  $ \k $--bilinear  pairing
  $ \; {\langle\,\ ,\ \rangle}_0 : F_0\big[\big[\textit{GL}_{\,n}^{\,p}\big]\big] \times U_0\big(\liegl_{\,n}^{\,p}\big) \relbar\joinrel\relbar\joinrel\relbar\joinrel\longrightarrow \kh \; $
 of (topological)  $ \k $--superbialgebras.  Now recall that  $ \; U_0\big(\liegl_{\,n}^{\,p}\big) := \Uhglnp \Big/ \hbar \, \Uhglnp \; $  naturally identifies with  $ U\big(\liegl_{\,n}^{\,p}\big) \, $.  Then, using the above presentation of  $ \, F_0\big[\big[\textit{GL}_{\,n}^{\,p}\big]\big] \, $   --- with specific generators whose coproduct and counit are given by the specific formulas  \eqref{eq: coprod+counit for x(i,j) - BIS}  ---   and its superbialgebra pairing  $ \, {\langle\,\ ,\ \rangle}_0 \, $ with  $ \, U_0\big(\liegl_{\,n}^{\,p}\big) = U\big(\liegl_{\,n}^{\,p}\big) \, $   --- also explicitly described by  \eqref{eq: pairing Fhglnp-Uhglnp}  ---   one quickly realises that  $ \, F_0\big[\big[\textit{GL}_{\,n}^{\,p}\big]\big] \, $  in turn identifies with  $ \, F\big[\big[\textit{GL}_{\,n}^{\,p}\big]\big] = {U\big(\liegl_{\,n}^{\,p}\big)}^* \, $,  the  \textsl{``algebra of functions'' on the formal general linear supergroup}  $ \textit{GL}_{\,n}^{\,p} \, $.  In fact, the identification is the unique one through which  $ \, \big\{\, x_{i{}j} \,\big|\, i \, , j \in I_n \,\big\} \, $  identifies with the dual basis to the basis  $ \, \big\{\, \textrm{e}_{i{}j} \,\big|\, i \, , j \in I_n \,\big\} \, $  of elementary matrices of  $ \liegl_{\,n}^{\,p} \, $.  In particular, the ``semi\-clas\-sical'' pairing  $ \, {\langle\,\ ,\ \rangle}_0 \, $  is non-degenerate.
                                                              \par
   The previous analysis proves that  $ \; F_0\big[\big[\textit{GL}_{\,n}^{\,p}\big]\big] \, := \, \Fhglnp \Big/ \hbar \, \Fhglnp \; $  is isomorphic to  $ \, {U\big(\liegl_{\,n}^{\,p}\big)}^* = {\Big( \Uhglnp \Big/ \hbar \, \Uhglnp \!\Big)}^{\!*} \, $,  with the pairing  $ \, {\langle\,\ ,\ \rangle}_0 \, $  identifying with the standard ``evaluation pairing'', which is non-degenerate.  In addition, we note that, since  $ I_\hbar $  contains  $ \, \hbar \, \Fhglnp \, $,  then  $ \Fhglnp $  is also  $ \hbar $--adically  complete.  Therefore, by a standard argument (namely an ``\,approximation modulo  $ \hbar^{\,n} \, $''  process, followed by taking the limit for  $ \, n \rightarrow +\infty \, $)  we can conclude that the pairing
 $ \; \langle\,\ ,\ \rangle : \Fhglnp \times \Uhglnp \relbar\joinrel\relbar\joinrel\longrightarrow \kh \; $
 itself is non-degenerate and, even more, through it  $ \Fhglnp $  identifies with  $ {\big( \Uhglnp \big)}^* $,  \,hence  \textit{(e)\/}  is proved too.
\end{proof}

\vskip7pt

\begin{rmk}  \label{rmk: Fhglnp-is-QFSHSA}
 Because of its properties, detailed in  Theorem \ref{thm: pres-Fhg}  above   --- in particular, claim  \textit{(e)}  ---   the Hopf superalgebra  $ \Fhglnp $  introduced there is a ``quantized formal series Hopf superalgebra'' in the sense of Drinfeld   --- cf.\  \cite{Dr}, \S 7,  and  \cite{Ga}, Definition 1.2, suitably adapted to the present setup of quantum  \textsl{super\/}groups).  This leads us to the following definition.
\end{rmk}

\vskip7pt

\begin{definition}  \label{def: QFSHSA Fhglnp}
 We call the (topological) Hopf superalgebra  $ \Fhglnp $  introduced in  Theorem \ref{thm: pres-Fhg}  above the  \textit{quantized formal series Hopf superalgebra (or just ``QFSHSA'', in short) associated with the Poisson supergroup  $ \textit{GL}_{\,n}^{\,p} \, $}.   \hfill  $ \diamond $
\end{definition}

\vskip9pt

   We conclude this part with a PBW-like theorem, providing for  $ \Fhglnp $  a (topological) basis of ordered monomials in the generators:

\vskip11pt

\begin{theorem}  \label{thm: PBW x Fhglnp}
 \textsl{(PBW Theorem for  $ \Fhglnp \, $)}
 Let us fix any total order in the set  $ \, \big\{\, x_{i{}j} \,\big|\, i , j \in I_n \,\big\} \, $  of generators of  $ \Fhglnp \, $.  Then the set of all truncated ordered monomials in the  $ x_{i{}j} $'s,  namely
  $$  \bigg\{\, {\textstyle \mathop{\overrightarrow{\prod}}\limits_{i, j \in I_n}} \hskip-3pt x_{i{}j}^{\,e_{i{}j}} \;\bigg|\; e_{i{}j} \in \NN \, , \, \forall \; i \, , j \in \! I_n \, , \; e_{i{}j} \leq 1 \text{\;\ if\;\ }  p_{i,j} = \one \,\bigg\}  $$
 is a\/  $ \kh $--basis  (in topological) of the\/  $ \kh $--module  $ \Fhglnp \, $.
\end{theorem}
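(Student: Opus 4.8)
The plan is to reduce the assertion to a genuine (non-topological) basis statement for the honest $\kh$--subalgebra $\,\dot{F}_\hbar \subseteq \Fhglnp\,$ generated by $\,\big\{ x_{i{}j} \mid i,j \in I_n \big\}\,$: since by Theorem \ref{thm: pres-Fhg}(a)--(b) the algebra $\Fhglnp$ is the $I_\hbar$--adic completion of $\dot{F}_\hbar\,$, it is enough to show that the truncated ordered monomials form an ordinary $\kh$--basis of $\dot{F}_\hbar\,$, and then pass to the completion. The first step is to exploit the evident $\,\NN^{\,n} \times \NN^{\,n}$--grading of $\dot{F}_\hbar$ by \emph{row content} and \emph{column content} of a monomial in the $x_{i{}j}$'s: every defining relation in Theorem \ref{thm: pres-Fhg}(a) is bihomogeneous for it --- in the last relation the term $\,x_{i{}k}\,x_{h{}j}\,$ has the same row-- and column--content as $\,x_{i{}j}\,x_{h{}k}\,$ --- so $\,\dot{F}_\hbar = \bigoplus_\gamma \dot{F}_\hbar(\gamma)\,$, and each component $\dot{F}_\hbar(\gamma)$ is spanned over $\kh$ by the \emph{finitely many} monomials of that bidegree, hence is a finitely generated $\kh$--module. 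Moreover $\dot{F}_\hbar$ is $\hbar$--torsion--free: by Theorem \ref{thm: pres-Fhg}(e) it embeds into $\,{\big(\Uhglnp\big)}^{*}\,$, the linear dual of a topologically free $\kh$--module, and such a dual is torsion--free over the domain $\kh\,$. Consequently each $\dot{F}_\hbar(\gamma)$, being finitely generated and torsion--free over the PID $\kh\,$, is free of finite rank.

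Next I would identify the semiclassical reduction $\,\dot{F}_\hbar \big/ \hbar\,\dot{F}_\hbar\,$. Because $\,q_i = e^{\pm\hbar} \equiv 1 \pmod{\hbar}\,$ and $\,q_i^{+1} - q_i^{-1} \equiv 0 \pmod{\hbar}\,$, all the relations of Theorem \ref{thm: pres-Fhg}(a) degenerate modulo $\hbar$ to $\;x_{i{}j}\,x_{h{}k} = {(-1)}^{p_{i{}j}\,p_{h{}k}}\,x_{h{}k}\,x_{i{}j}\;$ (for $(i,j) \neq (h,k)$) together with $\,x_{i{}j}^{\,2} = 0\,$ (for $p_{i{}j} = \one$); hence there is a surjection of $\k$--algebras $\;S \twoheadrightarrow \dot{F}_\hbar\big/\hbar\,\dot{F}_\hbar\;$ from the \emph{free super-commutative} $\k$--algebra $S$ on generators $x_{i{}j}$ of parity $p_{i{}j}\,$. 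To see that this surjection is an isomorphism I would compose it with the $\k$--algebra map $\,\dot{F}_\hbar\big/\hbar\,\dot{F}_\hbar \longrightarrow {U\big(\liegl_{\,n}^{\,p}\big)}^{*}\,$ induced by reducing the pairing of Theorem \ref{thm: pres-Fhg}(d) modulo $\hbar$ (recall $\Uhglnp\big/\hbar\,\Uhglnp = U\big(\liegl_{\,n}^{\,p}\big)$): by the formulas \eqref{eq: pairing-formulas} the resulting map $\,S \longrightarrow {U\big(\liegl_{\,n}^{\,p}\big)}^{*}\,$ sends each $x_{i{}j}$ to the functional dual to the elementary matrix $\textrm{e}_{i{}j}\,$, i.e.\ it is the classical embedding of the super-symmetric algebra $\,S\big({(\liegl_{\,n}^{\,p})}^{*}\big) = S\,$ into $\,{U\big(\liegl_{\,n}^{\,p}\big)}^{*}\,$, which is injective --- this is exactly the semiclassical identification already carried out in the proof of Theorem \ref{thm: pres-Fhg}. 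Therefore $\,S \twoheadrightarrow \dot{F}_\hbar\big/\hbar\,\dot{F}_\hbar\,$ is injective as well, so $\,\dot{F}_\hbar\big/\hbar\,\dot{F}_\hbar \cong S\,$.

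To conclude, I would invoke the classical fact that in the free super-commutative algebra $S$ the truncated ordered monomials --- for \emph{any} total order on the generating set, odd generators occurring with exponent $\leq 1$ --- form a $\k$--basis; this is just the monomial basis of a (super-)polynomial algebra, the order-independence being immediate from super-commutativity together with the vanishing of the squares of odd generators. Restricting this basis to a fixed bidegree $\gamma$ gives a $\k$--basis of $\,\dot{F}_\hbar(\gamma)\big/\hbar\,\dot{F}_\hbar(\gamma)\,$ consisting of the truncated ordered monomials of content $\gamma\,$; since $\dot{F}_\hbar(\gamma)$ is free of finite rank over the local ring $\kh\,$, Nakayama's lemma lifts this to the statement that those same monomials form a $\kh$--basis of $\dot{F}_\hbar(\gamma)\,$. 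Summing over all $\gamma$ shows that the truncated ordered monomials are a $\kh$--basis of $\,\dot{F}_\hbar = \bigoplus_\gamma \dot{F}_\hbar(\gamma)\,$, and taking the $I_\hbar$--adic completion yields the desired topological basis of $\Fhglnp\,$.

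The one step I expect to require real care is the identification $\,\dot{F}_\hbar\big/\hbar\,\dot{F}_\hbar \cong S\,$ of the second paragraph --- that is, checking that no ``hidden'' relations appear upon passing to the semiclassical limit --- although this is essentially subsumed by the non-degeneracy of the semiclassical pairing established for Theorem \ref{thm: pres-Fhg}; everything else is bookkeeping with the bigrading plus two applications of Nakayama's lemma (over $\k$ inside $S$, and over $\kh$ for each $\dot{F}_\hbar(\gamma)$). A more self-contained alternative would be to realise $\dot{F}_\hbar$ directly as an iterated skew (``super-Ore'') polynomial extension of $\kh$ in one fixed order of the $x_{i{}j}$'s --- which yields the PBW basis for that order at once --- and then transport the result to an arbitrary order via the bigrading and Nakayama as above; the cost there is the verification of the overlap ambiguities forced by the inhomogeneous last family of relations in \eqref{eq: gen.'s-ideal-J-in-Fh}, which the semiclassical route avoids.
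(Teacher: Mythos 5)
Your route is genuinely different from the paper's. The paper's proof works entirely on the completed side: it quotes the classical fact that the truncated ordered monomials in the $\overline{x}_{i{}j}$'s form a topological $\k$--basis of the semiclassical limit $\, F_0 = F\big[\big[GL_{\,n}^{\,p}\big]\big] = {U\big(\liegl_{\,n}^{\,p}\big)}^{\!*} \,$, and then lifts this through an explicit successive--approximation argument (expand modulo $\hbar\,$, subtract, divide by $\hbar\,$, iterate, using $\hbar$--adic completeness). You instead try to prove an honest, non--topological PBW basis for the polynomial subalgebra $\dot{F}_\hbar$ generated by the $x_{i{}j}$'s (bigrading by row/column content, torsion--freeness, identification of $\dot{F}_\hbar\big/\hbar\,\dot{F}_\hbar$ with the free super--commutative algebra, Nakayama) and then complete. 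This is an attractive reduction, and your semiclassical input (injectivity of $S \to {U\big(\liegl_{\,n}^{\,p}\big)}^{\!*}$ on monomials in the matrix coefficients) is essentially the same classical fact the paper invokes.

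Two steps, however, are thinner than your write--up suggests. First, the decomposition $\, \dot{F}_\hbar = \bigoplus_\gamma \dot{F}_\hbar(\gamma) \,$ for $\dot{F}_\hbar$ \emph{sitting inside} $\Fhglnp$ requires the full ideal of relations satisfied by the $x_{i{}j}$'s in $\Fhglnp$ to be bihomogeneous, not merely the listed generators: the defining ideal $\J$ is only \emph{topologically} generated (a closed ideal in a completion whose topology mixes degrees, since $x_{i{}i}-1$ is inhomogeneous), so the trace of $\J$ on the polynomial part could a priori contain extra, inhomogeneous relations --- and ruling those out is exactly the linear--independence half of the theorem, so as written there is a circularity risk. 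It can be repaired (e.g.\ prove independence directly in $\Fhglnp\,$: divide a putative relation by a power of $\hbar$ using torsion--freeness of $\,{\big(\Uhglnp\big)}^{*}$, reduce mod $\hbar\,$, and use the classical independence in $\,{U\big(\liegl_{\,n}^{\,p}\big)}^{\!*}$), but then the bigrading/Nakayama machinery is doing little work. Second, ``taking the $I_\hbar$--adic completion yields the desired topological basis'' is not a formality: the $I_\hbar$--adic topology is not a monomial--degree topology (powers of the diagonal generators $x_{i{}i}$ do not tend to $0$), so passing from a $\kh$--basis of the dense subalgebra to the statement about formal expansions of arbitrary elements of $\Fhglnp$ still requires an approximation argument --- which is precisely the content of the paper's proof. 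In short: the skeleton is viable and genuinely different, but the two points above must be argued, not asserted, before it matches the paper's conclusion.
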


\begin{proof}
 Let us write for simplicity  $ \, F_\hbar := \Fhglnp \, $.  The ``semiclassical limit'' of  $ F_\hbar $  is the quotient  $ \, F_0 := F_\hbar \Big/ \hbar\,F_\hbar \, $:  by construction, it is a topological Hopf algebra over  $ \k \, $,  which is isomorphic to  $ \, F\big[\big[GL_{\,n}^{\,p}\big]\big] = {U\big(\liegl_{\,n}^{\,p}\big)}^* \, $,  \,the ``formal superalgebra of functions'' on the (formal) supergroup  $ GL_{\,n}^{\,p} \, $.  Inside  $ F_0 \, $,  denote by  $ \overline{x}_{i{}j} $  the coset of  $ x_{i{}j} $  modulo  $ \, \hbar\,F_\hbar \, $:  then it is known that the set of truncated ordered monomials in the  $ \overline{x}_{i{}j} $'s,  i.e.\
 $ \; \bigg\{\, {\textstyle \mathop{\overrightarrow{\prod}}\limits_{i, j \in I_n}} \hskip-3pt \overline{x}_{i{}j}^{\;e_{i{}j}} \;\bigg|\; e_{i{}j} \in \NN \, , \, \forall \; i \, , j \in \! I_n \, , \; e_{i{}j} \leq 1 \text{\;\ if\;\ }  |i|+|j| = \one \,\bigg\} \; $,
 is a topological  $ \k $--basis  of  $ F\big[\big[GL_{\,n}^{\,p}\big]\big] \, $,  \,that is any  $ \, \overline{\phi} \in F\big[\big[GL_{\,n}^{\,p}\big]\big] \, $  can be uniquely written as a formal  $ \k $--linear  combination
 $ \; \overline{\phi} \, = \hskip-13pt \sum\limits_{\substack{\underline{e} \, \in \, \NN^{I_n \times I_n}  \\   e_{i{}j} \leq 1 \text{\,\ if\,\ }  |i|+|j| = \one}} \hskip-11pt c_{\,\underline{e}} \hskip3pt {\textstyle \mathop{\overrightarrow{\prod}}\limits_{i, j \, \in \, I_n}} \hskip-3pt \overline{x}_{i{}j}^{\;e_{i{}j}} \; $
 of truncated ordered monomials in the  $ \overline{x}_{i{}j} $'s,  for suitable (unique!) coefficients  $ \, c_{\,\underline{e}} \in \k \, $;  hereafter we will write such an expansion with the simpler notation
 $ \; \overline{\phi} \, = \sum_{\,\underline{e}} \hskip1pt c_{\,\underline{e}} \hskip3pt \overrightarrow{\prod}_{\,i,j} \hskip3pt \overline{x}_{i{}j}^{\;e_{i{}j}} \; $.
 From this, a standard argument gives us the expected result, as follows.  Let  $ \, \psi \in \Fhglnp \, $:  then its coset modulo  $ \, \hbar\, F_\hbar \, $  in  $ F_0 $  uniquely expands as
 $ \; \overline{\psi} \, = \sum_{\,\underline{e}} \hskip1pt c^{\,(0)}_{\,\underline{e}} \hskip3pt \overrightarrow{\prod}_{\,i,j} \hskip3pt \overline{x}_{i{}j}^{\;e_{i{}j}} \; $
%
%
%
 for suitable (unique!)  $ \, c^{\,(0)}_{\,\underline{e}} \in \k \, $.  Then  $ \; \psi \, = \, \sum_{\,\underline{e}} \hskip1pt c^{\,(0)}_{\,\underline{e}} \hskip3pt \overrightarrow{\prod}_{\,i,j} \hskip3pt x_{i{}j}^{\;e_{i{}j}} + \, \hbar \, \psi_1 \; $  for some  $ \, \psi_1 \in \Fhglnp \, $,  \,and we can start again with  $ \psi_1 $  instead of  $ \psi \, $.  Iterating this argument, one eventually finds
 $ \; \psi \, = \, \sum_{\,n \in \NN} \hskip1pt \hbar^n \sum_{\,\underline{e}} \hskip1pt c^{\,(n)}_{\,\underline{e}} \hskip1pt \overrightarrow{\prod}_{\,i,j} \hskip3pt x_{i{}j}^{\;e_{i{}j}} = \, \sum_{\,\underline{e}} \Big( \sum_{\,n \in \NN} c^{\,(n)}_{\,\underline{e}} \hbar^n \Big) \hskip1
pt \overrightarrow{\prod}_{\,i,j} \hskip3pt x_{i{}j}^{\;e_{i{}j}} \, $,
 for suitable  $ \, c^{\,(n)}_{\,\underline{e}} \in \k \, $,  \,which actually proves our claim.
\end{proof}

\vskip13pt

\subsection{Multiparametric QFSHSA's for the general linear supergroup}  \label{subsec: Mp-QFSHA's}
 Much like we constructed a QFSHA  $ \Fhglnp $  as dual to Yamane's uniparametric QUESA  $ \Uhglnp \, $,  one might follow the same strategy to construct a  \textsl{multiparametric\/}  QFSHSA dual to the multiparametric QUESA  $ \UhPhiglnp \, $,  eventually finding parallel results.  However, we present hereafter a different approach: as  $ \UhPhiglnp $  is obtained as deformation by twist of  $ \Uhglnp \, $,  we can obtain its dual as 2--cocycle deformation of  $ \, {\big( \Uhglnp \big)}^* = \Fhglnp \, $  using for that the unique 2--cocycle for  $ \Fhglnp $  that ``corresponds'' to the twist  $ \F_\Phi $  for $ \Uhglnp \, $.  This shortcut eventually leads us to obtain a presentation by generators and relations of the final, ``deformed'' object, directly ``by deforming'' the presentation by generators and relations that we already have for  $ \Fhglnp $   --- from  Theorem \ref{thm: pres-Fhg}.  The construction also provides us with a non-degenerate Hopf pairing of our multiparametric QUESA with  $ \UhPhiglnp $  through which the former identifies with the dual to the latter.

\vskip11pt

\begin{free text}  \label{free: deform.'s-vs-duality}
 \textbf{Deformation construction vs.\ linear duality.}
 A standard construction in Hopf algebra theory is that of  \textit{deformations},  either by  \textit{twist},  or by  \textit{2--cocycle}.  This extends to various type of ``generalised'' Hopf algebras, including Hopf superalgebras, and even to ``quantum Hopf superalgebras'', which are Hopf superalgebras only in a suitable topological sense, such as our QUESA's in  \S \ref{sec: mp-QUESA}  and our QFSHSA's in  \S \ref{subsec: QFSHSA's x Yamane}.  Without repeating them here, we quote the main relevant definitions from  \cite{GGP}, \S 4.1.5.  We also need an auxiliary result, connecting the two types of ``deformation'' with linear duality: the statement here is written in a ``plain version'', but it also extends to more general situation such as those of topological Hopf superalgebras, including those associated with quantum supergroups.
\end{free text}

\vskip7pt

\begin{prop}  \label{prop: deformation-vs-duality}
 Let  $ H $  be a Hopf superalgebra, and  $ H^* $  its linear dual.
 \vskip3pt
   {\it (a)}\,  Let  $ \F $  be a twist for  $ H \, $,  and  $ \sigma_{{}_\F} $  the image of  $ \F $  in $ {(H \otimes H)}^* $  for the natural composed embedding  $ \, H \otimes H \lhook\joinrel\relbar\joinrel\longrightarrow H^{**} \otimes H^{**} \lhook\joinrel\relbar\joinrel\longrightarrow {\big( H^* \otimes H^* \big)}^* \, $.  Then  $ \sigma_{{}_\F} $  is a 2--cocycle for  $ H^* \, $,  and there exists a canonical isomorphism  $ \, {\big( H^* \big)}_{\sigma_{{}_\F}} \!\cong {\big( H^\F \,\big)}^* \, $.
 \vskip3pt
   {\it (b)}\,  Assume there exists a natural identification  $ \, {(H \otimes H)}^* = H^* \otimes H^* \, $  (e.g.,  $ H $  is finite-dimensional, or  ``$ \,\otimes $''  is meant in a suitable, topological sense).  Let  $ \sigma $  be a 2--cocycle for  $ H \, $,  \,and let  $ \F_\sigma $  be the image of  $ \sigma $  through this identification.  Then  $ \F_\sigma $  is a twist for  $ H^* \, $,  and there exists a canonical isomorphism  $ \, {\big( H^* \big)}^{\F_\sigma} \cong {\big( H_\sigma \big)}^* \, $.
\qed
\end{prop}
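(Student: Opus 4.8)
The plan is to reduce everything to a single ``dictionary'': for a Hopf superalgebra $A$ that is finite dimensional (or, in the topological variants, such that the relevant completed tensor products behave as in \S \ref{subsec: QFSHSA's x Yamane}), a \emph{twist} for $A$ is literally the same datum as a \emph{$2$--cocycle} for $A^*$. Recall from \cite{GGP}, \S 4.1.5, that a twist for $A$ is an invertible even element $\F = \F' \otimes \F'' \in A \otimes A$ with $(\epsilon \otimes \id)(\F) = 1 = (\id \otimes \epsilon)(\F)$ and $(\Delta \otimes \id)(\F)\,(\F \otimes 1) = (\id \otimes \Delta)(\F)\,(1 \otimes \F)$ in $A^{\otimes 3}$, while a $2$--cocycle for a Hopf superalgebra $B$ is a convolution-invertible $\sigma \in {(B \otimes B)}^*$ with $\sigma(\,\cdot\,,1) = \epsilon = \sigma(1,\,\cdot\,)$ together with a ``hexagon'' identity coupling $\sigma$ to the product and the coproduct of $B$. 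When $A$ is finite dimensional one has $A \otimes A = {(A^* \otimes A^*)}^*$ canonically, so $\F \in A \otimes A$ \emph{is} the functional $\sigma_{{}_\F} \in {(A^* \otimes A^*)}^*$ with $\sigma_{{}_\F}(f \otimes g) = {(-1)}^{|g|\,|\F'|} f(\F')\, g(\F'')$ in the evaluation pairing fixed in \S \ref{free: constr.-FhG}; symmetrically, replacing $A \otimes A$ by $A^* \otimes A^*$ in the target, an element $\sigma \in {(A \otimes A)}^*$ corresponds to $\F_\sigma \in A^* \otimes A^*$. The easy half of the dictionary is immediate: invertibility of $\F$ matches convolution-invertibility of $\sigma_{{}_\F}$, with $\sigma_{{}_\F}^{-1} = \sigma_{{}_{\F^{-1}}}$, and the counit normalisation of $\F$ matches $\sigma_{{}_\F}(\,\cdot\,,1) = \epsilon = \sigma_{{}_\F}(1,\,\cdot\,)$. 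The substance is the other half: \textbf{pairing the two sides of the hexagon identity for $\F$ against $A^* \otimes A^* \otimes A^*$ yields exactly the $2$--cocycle identity for $\sigma_{{}_\F}$ on $A^*$} — the coproduct of $A$ occurring in $(\Delta \otimes \id)(\F)$ and $(\id \otimes \Delta)(\F)$ pairs with the product of $A^*$, the multiplications inside $A^{\otimes 3}$ pair with the iterated coproduct of $A^*$, and after feeding in the counit one is left precisely with $\sigma_{{}_\F}(f_{(1)},g_{(1)})\,\sigma_{{}_\F}(f_{(2)} g_{(2)},h) = \sigma_{{}_\F}(g_{(1)},h_{(1)})\,\sigma_{{}_\F}(f,g_{(2)} h_{(2)})$. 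This one computation (with Koszul signs carried through the conventions of \S \ref{free: constr.-FhG}) is the technical core.

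To prove \textit{(a)}, take $A = H$, so $\sigma_{{}_\F}$ is a $2$--cocycle for $H^*$, and form ${(H^*)}_{\sigma_{{}_\F}}$. As a coalgebra this is just $H^*$ (a $2$--cocycle deformation leaves the coalgebra untouched), and the coalgebra of ${(H^\F)}^*$ is dual to the product of $H^\F$, which is the product of $H$; hence the two coalgebras coincide and the natural candidate isomorphism is $\id_{H^*}$. It then remains to match the algebra structures, i.e.\ to show that the $\sigma_{{}_\F}$--deformed convolution product on $H^*$ equals the product of ${(H^\F)}^*$, which on $f,g \in H^*$ is the functional $h \mapsto (f \otimes g)\big(\F\,\Delta(h)\,\F^{-1}\big)$. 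Expanding $\F\,\Delta(h)\,\F^{-1} = \F' h_{(1)} {(\F^{-1})}' \otimes \F'' h_{(2)} {(\F^{-1})}''$ and rewriting each tensor leg by means of the iterated coproduct of $H^*$ (dual to triple multiplication in $H$) turns this functional into $\sigma_{{}_\F}\big(f_{(1)},g_{(1)}\big)\,(f_{(2)} g_{(2)})(h)\,\sigma_{{}_\F}^{-1}\big(f_{(3)},g_{(3)}\big)$, which is exactly the deformed product — again a matter of tracking signs. Thus $\id_{H^*}$ is a bialgebra isomorphism ${(H^*)}_{\sigma_{{}_\F}} \xrightarrow{\sim} {(H^\F)}^*$; since a deformation by twist, resp.\ by $2$--cocycle, of a Hopf superalgebra is again a Hopf superalgebra (with suitably modified antipode), this bialgebra isomorphism automatically respects the antipodes, so \textit{(a)} is proved.

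Part \textit{(b)} is the mirror image. The hypothesis $(H \otimes H)^* = H^* \otimes H^*$ gives reflexivity $H^{**} = H$, so $H^*$ is itself a Hopf superalgebra to which the dictionary applies with the roles of ``$A$'' and ``$A^*$'' interchanged: a $2$--cocycle $\sigma$ for $H$ is, through $(H \otimes H)^* = H^* \otimes H^*$, exactly a twist $\F_\sigma$ for $H^*$ (its hexagon identity being the other reading of the cocycle identity for $\sigma$, i.e.\ the same computation of the first paragraph run on the other side), with $\F_\sigma^{-1} = \F_{\sigma^{-1}}$ and the required counit normalisation. Applying \textit{(a)} to the Hopf superalgebra $H^*$ and the twist $\F_\sigma$ — using $H^{**}=H$ to identify back — gives a canonical isomorphism $H_\sigma \cong {\big({(H^*)}^{\F_\sigma}\big)}^*$, and dualising once more, again by reflexivity, yields the desired canonical isomorphism ${(H^*)}^{\F_\sigma} \cong {(H_\sigma)}^*$; alternatively one simply repeats the computation of the previous paragraph with products and coproducts interchanged, the candidate isomorphism again being $\id_{H^*}$. \textbf{The main obstacle is not structural but bookkeeping:} one must keep the Koszul signs — those in the pairing $\langle f \otimes g, a \otimes b\rangle = {(-1)}^{|g|\,|a|} f(a)\,g(b)$, in the coproduct of $H^* \otimes H^*$, in the hexagon and $2$--cocycle identities, and in the deformed product and coproduct — aligned so that all the asserted identities hold on the nose. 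In the topological extensions one must moreover check that $\sigma_{{}_\F}$, resp.\ $\F_\sigma$, lands in the appropriate completed tensor product, but this is routine given the way those completions were set up in \S \ref{subsec: QFSHSA's x Yamane} and \cite{GGP}.
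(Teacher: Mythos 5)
The paper gives no proof of this proposition at all: it is stated with a terminal \qed as a standard duality fact, with only the definitions (twist, 2--cocycle, and the corresponding deformations) quoted from \cite{GGP}, \S 4.1.5. So there is no argument of the authors to compare yours against; what you have written is the standard proof, and in outline it is correct. Your ``dictionary'' (under the evaluation embedding $H\otimes H\hookrightarrow (H^*\otimes H^*)^*$, the hexagon identity for $\F$ dualises to the Sweedler cocycle identity for $\sigma_{{}_\F}$, counit normalisations and invertibility matching via $\sigma_{{}_\F}^{-1}=\sigma_{{}_{\F^{-1}}}$), followed by the observation that $(H^\F)^*$ and $(H^*)_{\sigma_{{}_\F}}$ share the coalgebra of $H^*$ and that dualising the conjugated coproduct $\F\,\Delta(\,\cdot\,)\,\F^{-1}$ produces exactly the $\sigma_{{}_\F}$--deformed convolution product, is precisely how one proves \textit{(a)}; and a bialgebra isomorphism between Hopf superalgebras automatically intertwines antipodes, as you say.

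Two caveats worth recording. First, in \textit{(a)} the very definition of $(H^*)_{\sigma_{{}_\F}}$ needs $H^*$ to be a Hopf superalgebra, i.e.\ $\Delta_{H^*}$ must land in $H^*\otimes H^*$ (or a suitable completion); this is implicit in the ``plain version'' (finite dimension) and is exactly what the topological framework of \S \ref{subsec: QFSHSA's x Yamane} supplies in the case the paper actually uses, namely $H=\Uhglnp$, $H^*=\Fhglnp$. Second, in \textit{(b)} your main route deduces reflexivity $H^{**}=H$ from the identification $(H\otimes H)^*=H^*\otimes H^*$ and then applies \textit{(a)} to $H^*$; that deduction is fine in the finite-dimensional case but is not automatic in the topological situations the proposition is meant to cover, and the double-dualisation step needs reflexivity again. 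The ``alternative'' you mention in passing --- running the same conjugation/dualisation computation directly, with products and coproducts interchanged, so that the twist identity for $\F_\sigma$ is the dual reading of the cocycle identity for $\sigma$ and the coproduct of $(H^*)^{\F_\sigma}$ is dual to the deformed product of $H_\sigma$ --- avoids reflexivity altogether and is the argument to keep. Finally, make sure the side conventions (on which side $\F\otimes 1$, resp.\ $1\otimes\F$, multiplies in the twist identity, and the corresponding ordering in the cocycle identity) are taken verbatim from \cite{GGP}, since a mismatch there silently exchanges $\sigma$ with $\sigma^{-1}$; with the paper's even twists the Koszul signs you worry about are harmless, but the convention matching is not.
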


\vskip11pt

\begin{free text}  \label{free: constr.-FhPhiG}
 \textbf{Multiparametric QFSHSA over  $ \textit{GL}_n^{\,p} \, $:  the construction.}
 We resume assumptions, notation and terminology from  \S \ref{subsec: Yam-Mp_QUESA's}.  Let  $ \Uhglnp $ be Yamane's QUESA from  Definition \ref{def: QUESA Uhglnp},  and let  $ \, \Fhglnp \, $  be the QFSHSA from  Definition \ref{def: QFSHSA Fhglnp},  which naturally identifies with  $ \, {\big( \Uhglnp \big)}^* \, $  by  Theorem \ref{thm: pres-Fhg}\textit{(e)}.
                                                              \par
   We choose any antisymmetric matrix  $ \, \Phi = {\big( \phi_{t,\ell} \big)}_{t=1,\dots,n;}^{\ell=1,\dots,n;} \in \mathfrak{so}_n\big(\kh\big) \, $,  \,and from it two we define the element
\begin{equation}   \label{eq: Resh-twist_F-uPhgd - again}
  \F_\Phi  \,\; := \;\,  \exp\Big(\hskip1pt \hbar \; 2^{-1} \, {\textstyle \sum_{t,\ell=1}^n} \, \phi_{t,\ell} \, \varGamma_t \otimes \varGamma_\ell \Big)  \;\; \in \;\;  \Uhglnp \,\widehat{\otimes}\, \Uhglnp
\end{equation}
 which is a  {\sl twist\/}  element for  $ \Uhglnp \, $,  in the standard sense recalled right above.  The corresponding twist deformation of Yamane's QUESA is our MpQUESA, namely  $ \, \UhPhiglnp := {\Uhglnp}^{\F_\Phi} $  of  Definition \ref{def: Multipar UhPhiglnp}.
                                                       \par
   To make life easier, we adopt hereafter the simpler notation  $ \, \uhg := \Uhglnp \, $  and  $ \, \fhg := \Fhglnp \, $.  Now, according to  Proposition \ref{prop: deformation-vs-duality},  the twist  $ \F_\Phi $  of  $ \uhg $  identifies with some  $ \, \sigma_\Phipicc := \sigma_{\F_\Phi} $  which is a 2--cocycle for  $ \, {\big( \uhg \big)}^* = \fhg \, $.  Indeed,  $ \sigma_\Phipicc $  is simply given by  \textsl{evaluation at  $ \F \, $},  namely
\begin{equation}   \label{eq: sigma_Phi}
  \sigma_\Phipicc \, : \, \fhg
  \times \fhg \relbar\joinrel\relbar\joinrel\longrightarrow \kh
  \quad ,  \qquad  (\varphi\,,\psi) \,\mapsto\, \big\langle\,
  \varphi \otimes \psi \, , \, \F \,\big\rangle
\end{equation}
   \indent   Now, from  \eqref{eq: sigma_Phi}  and the formulas  \eqref{eq: pairing-formulas},  direct calculation gives
  $$  \displaylines{
   \sigma_\Phipicc\big( x_{i,\,r} \, , x_{\ell,\,h} \big)  \;
 = \;  \big\langle\, x_{i,\,r} \otimes x_{\ell,\,h} \, , \, \F \,\big\rangle  \; =   \hfill  \cr
   \qquad   = \;  {\textstyle \sum\limits_{m=0}^{+\infty}} {{\,\hbar^m\,} \over {\,m! \, 2^m\,}} \,
   \left\langle\, x_{i,\,r} \otimes x_{\ell,\,h} \, , \, {\left(\, {\textstyle \sum_{t,\,k=1}^n} \,
   \phi_{t,k} \, \varGamma_t \otimes \varGamma_k \right)}^m \,\right\rangle  \; =   \hfill  \cr
   = \;  {\textstyle \sum\limits_{m=0}^{+\infty}} {{\,\hbar^m\,} \over {\,m! \, 2^m\,}} \, \left\langle\, \Delta^{(m-1)}\big( x_{i,\,r} \otimes x_{\ell,\,h} \big) \, , \, {\left(\, {\textstyle \sum_{t,\,k=1}^n} \, \phi_{t,k} \, \varGamma_t \otimes \varGamma_k \right)}^{\otimes m} \,\right\rangle  }  $$
 Let us consider  $ \; \left\langle\, \Delta^{(m-1)}\big( x_{i,\,r} \otimes x_{\ell,\,h} \big) \, ,
 \, {\left(\, {\textstyle \sum_{t,\,k=1}^n} \, \phi_{t,k} \,
 \varGamma_t \otimes \varGamma_k \right)}^{\otimes m} \,\right\rangle \; $.  Definitions give
  $$  \displaylines{
   \left\langle\, \Delta^{(m-1)}\big( x_{i,\,r} \otimes x_{\ell,\,h} \big) \, ,
   \, {\left(\, {\textstyle \sum_{t,\,k=1}^n} \, \phi_{t,k} \,
   \varGamma_t \otimes \varGamma_k \right)}^{\otimes m} \,\right\rangle  \; =   \hfill  \cr
   = \!  \hskip-23pt {\textstyle \sum\limits_{\substack{\qquad s_1,\dots,\,s_{m-1}=1  \\
   \qquad e_1,\dots,\,e_{m-1}=1}}^n}
 \hskip-25pt \epsilon(\underline{s},\underline{e})
 \left\langle x_{i,\,s_1} \otimes x_{\ell,\,e_1} \otimes \cdots \otimes x_{s_{m-1},\,r} \otimes x_{e_{m-1},\,h} \, ,
   {\bigg(\, {\textstyle \sum\limits_{t,\,k=1}^n} \phi_{t,k} \,
   \varGamma_t \otimes \varGamma_k \bigg)}^{\!\!\otimes m} \right\rangle  \; =  \cr
   \hfill   = \;  {\textstyle \sum\limits_{\substack{s_1,\dots,\,s_{m-1}=1  \\
   e_1,\dots,\,e_{m-1}=1}}^n}
 \hskip-11pt \epsilon(\underline{s},\underline{e})
 \, {\textstyle \prod\limits_{c=1}^m} \,
   {\textstyle \sum\limits_{t,\,k=1}^n} \, \phi_{t,k} \, \big\langle x_{s_{c-1},\,s_c} \, ,
   \varGamma_t \big\rangle \, \big\langle x_{e_{c-1},\,e_c} \, , \varGamma_k \, \big\rangle  }  $$
 where we set  $ \, s_0 := i \, $,  $ \, s_m := r \, $,  $ \, e_0 := \ell \, $,  $ \, e_m := h \, $
 and
%
\begin{equation}  \label{eq: sigma(s,e)}
   \epsilon(\underline{s}\,,\underline{e}\,)  \, := \,  {(-1)}^{\sum_{t=0}^{m-2} \sum_{k=t+1}^{m-1} p(x_{s_k,s_{k+1}}) \cdot p(x_{e_t,e_{t+1}})}
\end{equation}
 Now, the formulas defining the pairing imply that
 $ \; \big\langle x_{s_{c-1},\,s_c} \, , \varGamma_t \big\rangle \, \big\langle x_{e_{c-1},\,e_c} \, ,
 \varGamma_k \, \big\rangle \, = \, 0 \; $
 when  $ \, s_{c-1} \not= s_c \, $  or  $ \, e_{c-1} \not= e_c \, $;  \,therefore, in the previous computation all relevant signs  $ \epsilon(\underline{s}\,,\underline{e}\,) $  as in  \eqref{eq: sigma(s,e)}  actually boil down to be  ``$ \, +1 \, $''  and eventually one gets
  $$  \displaylines{
   \sigma_\Phipicc \big( x_{i,\,r} \, , x_{\ell,\,h} \big)  \;
   = \;  \delta_{i,r} \, \delta_{\ell,h} \,
   {\textstyle \sum\limits_{m=0}^{+\infty}} {{\,\hbar^m\,} \over {\,m! \, 2^m\,}} \,
   {\left(\, {\textstyle \sum_{t,\,k=1}^n} \, \phi_{t,k} \, \big\langle x_{i,\,i} \, ,
   \varGamma_t \big\rangle \, \big\langle x_{\ell,\,\ell} \, , \varGamma_k \, \big\rangle \right)}^m  \; =
   \hfill  \cr
   \quad \quad \quad \quad   = \;  \delta_{i,r} \, \delta_{\ell,h} \,
   {\textstyle \sum\limits_{m=0}^{+\infty}} {{\,\hbar^m\,} \over {\,m! \, 2^m\,}} \, {(\phi_{i,\ell})}^m  \; = \;
   \delta_{i,r} \, \delta_{\ell,h} \, \exp\big( \hbar \, \phi_{i,\ell} \big)  \; = \;
   \delta_{i,r} \, \delta_{\ell,h} \, e^{\hbar \, 2^{-1} \, \phi_{i,\ell}}   \hfill  }  $$
%
\begin{equation}  \label{eq: formula x sigma_F}
  \hskip-13pt \text{i.e.} \hskip21pt \qquad   \sigma_\Phipicc\big( x_{i,\,r} \, , x_{\ell,\,h} \big)  \; = \;  \delta_{i,r} \,
  \delta_{\ell,h} \, e^{\hbar \; 2^{-1} \phi_{i,\ell}}   \qquad \qquad
  \forall \;\; i \, , r , \ell , h \in \{1,\dots,n\}   \hfill
\end{equation}
   \indent   Using this formula, the deformed product in  $ {\fhg}_{\sigma_\Phipicc} $  can be described as follows:
  $$  \displaylines{
   x_{r,\,s} \raise-1pt\hbox{$ \; \scriptstyle \dot\sigma_\Phipicc $}\, x_{\ell,\,t}  \; := \;
   \sigma_\Phipicc\big( {(x_{r,\,s})}_{(1)} \, , \, {(x_{\ell,\,t})}_{(1)} \big) \; {(x_{r,\,s})}_{(2)} \, {(x_{\ell,\,t})}_{(2)} \; \sigma_\Phipicc^{-1}\!\big( {(x_{r,\,s})}_{(3)} \, , \, {(x_{\ell,\,t})}_{(3)} \big)  \; = \hfill  \cr
   \hfill   = \;  \sigma_\Phipicc\big( x_{r,\,r} \, , \, x_{\ell,\,\ell} \big) \; x_{r,\,s} \, x_{\ell,\,t} \; \sigma_\Phipicc^{-1}\!\big( x_{s,\,s} \, , \, x_{t,\,t} \big)  \; =  \;  e^{\hbar \; 2^{-1} (\phi_{r,\ell} \, - \, \phi_{s,t})} \, x_{r,\,s} \, x_{\ell,\,t}  }  $$
%
\begin{equation}  \label{eq: formula x sigma-deformed-product}
 \hskip-11pt \text{i.e.} \hskip17pt \qquad
  x_{r,\,s} \raise-1pt\hbox{$ \; \scriptstyle \dot\sigma_\Phipicc $}\, x_{\ell,\,t}  \; = \;
  e^{\hbar \; 2^{-1} (\phi_{r,\ell} \, - \, \phi_{s,t})} \; x_{r,\,s} \, x_{\ell,\,t}
  \quad \qquad  \forall \;\; r , s , \ell , t \in \{1,\dots,n\}
\end{equation}
 \vskip5pt
   Note that this formula shows how the new, deformed product is equivalent modulo  $ \, \hbar \, $  to the old one: this happens because we work with 2--cocycles  $ \sigma_\Phipicc $  of the form  $ \, \sigma_\Phipicc = \exp\big( \hbar \, \varsigma \big) \, $  where  $ \, \varsigma \in {\big( \fhg \otimes \fhg \big)}^* \, $,  \,so that  $ \, \sigma_\Phipicc = \text{id} + \mathcal{O}(\hbar) \, $.
By this same reason,  \textit{any set of elements which generate, as an algebra, the QFSHSA  $ \fhg $  will also generate it w.r.t.\ the new, deformed product}.  For this reason,  \eqref{eq: formula x sigma-deformed-product}  is enough to describe  $ {\fhg}_{\sigma_\Phipicc} $  as the latter is generated (w.r.t.\ the new product) by the  $ x_{r,\,s} $'s,  just like  $ \fhg $  was (with the old product).
                                                                                       \par
   More in detail, from the original presentation of  $ \fhg $  by generators
   --- the  $ x_{r,\,s} $'s  ---   and relations   --- namely, those in  Theorem \ref{thm: pres-Fhg}  ---
   using  \eqref{eq: formula x sigma-deformed-product}  above we find a similar presentation of
   $ {\fhg}_{\sigma_\Phipicc} $  by generators   --- the  $ x_{r,\,s} $'s  again ---
   and relations, where the latter depend on ``multiparameters'' of the form
  $$  q_{\,r,s}  \, := \,  q^{\,\phi_{r,s}} \, = \, e^{\hbar\,\phi_{r,s}}   \qquad \qquad   \forall \;\; r, s \in \{1,\dots,n\}  $$
 Now, when writing these relations in detail, one finds the following outcome:
\end{free text}

\vskip9pt

\begin{theorem}  \label{thm: pres-FhPhiglnp}
 There exists a unique topological  $ \kh $--superbialgebra  $ \FhPhiglnp $  enjoying the following properties:
 \vskip5pt
   (a)\;  it is topologically generated by  $ \, \big\{\, x_{i{}j} \,\big|\, i \, , j \in\! I_n \big\} \, $,  with parity
 $ \, |x_{i{}j}| = p_{i{}j} := p(i)+p(j) \, $,  \,subject to the following relations:
  $$  \displaylines{
   \hskip51pt   x_{i{}j}^2  \; = \;  0   \hskip173pt  \big(\; p_{i{}j} = \one \,\big)  \cr
   x_{i{}j} \, x_{i{}k}  \; = \;  {(-1)}^{p_{i{}j} p_{i{}k}} q_{\,i} \, q_{j,k}^{\;-1} \, x_{i{}k} \, x_{i{}j}  \hskip105pt  \big(\; j < k \,\big)  \cr
   x_{i{}j} \, x_{h{}j}  \; = \;  {(-1)}^{p_{i{}j} p_{h{}j}} q_j \, q_{\,i,h}^{\;+1} \, x_{h{}j} \, x_{i{}j}  \hskip105pt  \big(\, i<h \,\big)  \cr
   \hskip17pt   x_{i{}j} \, x_{h{}k}  \; = \;  {(-1)}^{p_{i{}j} p_{h{}k}\,} q_{\,i,h}^{\;+1} \, q_{j,k}^{\;-1} \, x_{k{}h} \, x_{i{}j}  \hskip81pt  \big(\, i<h \, , \; j>k \,\big)  \cr
   \hskip7pt   x_{i{}j} \, x_{h{}k}  \; = \;  {(-1)}^{p_{i{}j} p_{h{}k}\,} q_{\,i,h}^{\;+1} \, q_{j,k}^{\;-1} \, x_{h{}k} \, x_{i{}j} \, + {(-1)}^{{p_{i{}j} p_{i{}k}}} \big( q_{\,i}^{+1} - q_{\,i}^{-1} \big) \, q_{j,k}^{\;-1} \, x_{i{}k} \, x_{h{}j}  \hskip17pt  \bigg(\; {{i < h} \atop {j < k}}  \,\bigg)  }  $$
 where  $ \; q := e^\hbar \, $,  $ \; q_{\,s} := e^{\hbar \, {(-1)}^{p(s)}} = q^{{(-1)}^{p(s)}} \, $,  $ \; q_{\,r,s} := e^{\hbar \, \phi_{r,s}} = q^{\,\phi_{r,s}} \, $;
 \vskip5pt
   (b)\;  it is complete with respect to the  $ I_\hbar $--adic  relation,  where  $ I_\hbar $  is the two-sided ideal of  $ \FhPhiglnp $  generated by the set  $ \, \big\{ x_{i{}j} - \delta_{i{}j} \,\big|\, i , j \in I_n \big\} \cup \big\{\, \hbar \, 1_{F^\Phipicc_\hbar[[\textit{GL}_n^p]]} \,\big\} \, $;
 \vskip5pt
   (c)\;  its coproduct and counit are defined, in terms of the above presentation, by
\begin{equation}  \label{eq: Phi-coprod+counit for x(i,j) - Phi-case}
  \Delta(x_{ij}) = {\textstyle \sum\limits_{a=1}^n} \, {(-1)}^{p_{i{}a} p_{a{}j}} x_{i{}a} \otimes x_{a{}j}   \quad ,  \qquad
      \epsilon(x_{i{}j}) = \delta_{i{}j}
   \qquad \quad  \forall \;\; i \, , j \in I_n   \quad
\end{equation}
%
%
 \vskip5pt
   (d)\;  it admits a non-degenerate pairing of (topological)\/  $ \kh $--superbialgebras
  $$  \langle\,\ ,\ \rangle : \FhPhiglnp \times \UhPhiglnp \relbar\joinrel\relbar\joinrel\relbar\joinrel\longrightarrow \kh  $$
 given on generators   --- $ \, \forall \, i \, , j \, , k \in I_n \, $,  $ \, t \in I_{n-1} \, $  ---   by
\begin{equation}  \label{eq: pairing-formulas x Phi-case}
  \big\langle x_{i{}j} \, , E_t \big\rangle  \; = \;  \delta_{i,t} \, \delta_{j,t+1} \;\; ,  \quad
\big\langle x_{i{}j} \, , F_t \big\rangle  \; = \,  \delta_{i,t+1} \, \delta_{j,t} \;\; ,  \quad  \big\langle x_{i{}j} \, , \varGamma_k \big\rangle  \; = \;  \delta_{i,k} \, \delta_{j,k}
\end{equation}
 \vskip5pt
   (e)\;  through the pairing in (d) above, it is isomorphic (as a topological\/  $ \kh $--superbialgebra)  to  $ \, {\UhPhiglnp}^* $,  \,the linear dual to  $ \UhPhiglnp \, $.  In particular,  $ \FhPhiglnp $  itself is a  \textsl{(topological) Hopf superalgebra}  over\/  $ \kh \, $.
 \vskip5pt
   (f)\;  it is isomorphic (as a topological\/  $ \kh $--superbialgebra)  to the deformation of  $ \, \Fhglnp $  by the 2--cocycle  $ \sigma_\Phipicc $  in  \eqref{eq: sigma_Phi},  that is  $ \; \FhPhiglnp \cong {\big( \Fhglnp \big)}_{\sigma_\Phipicc} \; $.
\end{theorem}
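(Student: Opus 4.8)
The plan is to \emph{define} $\FhPhiglnp$ to be the $2$--cocycle deformation $\big(\Fhglnp\big)_{\sigma_\Phipicc}$ with $\sigma_\Phipicc$ the cocycle of \eqref{eq: sigma_Phi}, so that claim \textit{(f)} holds by construction, and then to read off \textit{(a)}--\textit{(e)} from the general deformation machinery together with the explicit formulas \eqref{eq: formula x sigma_F} and \eqref{eq: formula x sigma-deformed-product} already established in \S\ref{free: constr.-FhPhiG}. For \textit{(e)} and \textit{(d)} I would invoke Proposition \ref{prop: deformation-vs-duality}\textit{(a)}: since $\sigma_\Phipicc = \sigma_{\F_\Phi}$ is the cocycle attached to the twist $\F_\Phi$ of $\uhg := \Uhglnp$, and $\fhg := \Fhglnp \cong \big(\uhg\big)^*$ by Theorem \ref{thm: pres-Fhg}\textit{(e)}, that proposition produces a canonical isomorphism of (topological) Hopf superalgebras $\big(\fhg\big)_{\sigma_\Phipicc} \cong \big(\uhg^{\F_\Phi}\big)^* = \big(\UhPhiglnp\big)^*$, which is exactly claim \textit{(e)} and in particular makes $\FhPhiglnp$ a topological Hopf superalgebra over $\kh$. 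Under this identification the pairing required in \textit{(d)} is, on the underlying $\kh$--modules, literally the pairing $\langle\,,\,\rangle$ of Theorem \ref{thm: pres-Fhg}\textit{(d)}: deformation by twist alters only the \emph{co}algebra structure of $\uhg$ (hence leaves its algebra, its standard representation $V_n$, and therefore the matrix-coefficient values $\langle x_{ij},E_t\rangle$, $\langle x_{ij},F_t\rangle$, $\langle x_{ij},\varGamma_k\rangle$ untouched), while deformation by the cocycle alters only the algebra structure of $\fhg$ (leaving its coalgebra untouched), so the same bilinear map is now a superbialgebra pairing of the deformed objects, non-degenerate because it was before; this gives \textit{(d)} with the formulas \eqref{eq: pairing-formulas x Phi-case}.

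It remains to extract the presentation \textit{(a)}--\textit{(c)}. Since a $2$--cocycle deformation leaves the coalgebra structure intact, $\Delta$ and $\epsilon$ are those of $\Fhglnp$, giving \textit{(c)} from Theorem \ref{thm: pres-Fhg}\textit{(c)}. As $\sigma_\Phipicc = \exp(\hbar\,\varsigma) = \mathrm{id} + \mathcal{O}(\hbar)$, the new product coincides with the old one modulo $\hbar$, so the $x_{ij}$ still generate topologically and the ideal $I_\hbar$ together with the $I_\hbar$--adic completeness transfer verbatim, yielding \textit{(b)} and the generation part of \textit{(a)}. For the relations in \textit{(a)} I would rewrite each of the five families of relations of Theorem \ref{thm: pres-Fhg}\textit{(a)} in terms of the deformed product via \eqref{eq: formula x sigma-deformed-product}: inverting that formula and using the antisymmetry of $\Phi$ (so $\phi_{i,i}=0$ and $\phi_{j,k}=-\phi_{k,j}$), each undeformed product $x_{r,s}\,x_{\ell,t}$ equals $e^{-\hbar\,2^{-1}(\phi_{r,\ell}-\phi_{s,t})}$ times the deformed product of $x_{r,s}$ and $x_{\ell,t}$; substituting and collecting the scalars turns the factor $q_i$ (resp. $q_j$, resp. $1$) of the old relations into $q_i\,q_{j,k}^{-1}$ (resp. $q_j\,q_{i,h}^{+1}$, resp. $q_{i,h}^{+1}\,q_{j,k}^{-1}$) and multiplies the extra term in the last relation by $q_{j,k}^{-1}$, which is precisely the list displayed in \textit{(a)} with $q_{r,s}:=e^{\hbar\phi_{r,s}}$. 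These relations are moreover a \emph{complete} set of defining relations, not merely valid ones: a deformation by a cocycle of the form $\exp(\hbar\,\varsigma)$ preserves the underlying $\kh$--module, so the PBW basis of truncated ordered monomials of Theorem \ref{thm: PBW x Fhglnp} remains a topological $\kh$--basis after deformation, which forces the deformed relations to be exhaustive (alternatively, completeness follows from the non-degenerate pairing with $\UhPhiglnp$ just established). Uniqueness of a topological $\kh$--superbialgebra with properties \textit{(a)}--\textit{(c)} is then argued exactly as in the proof of Theorem \ref{thm: pres-Fhg}.

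The only genuine labour is the bookkeeping in the preceding paragraph — running each of the five relation families through \eqref{eq: formula x sigma-deformed-product} and checking that the parity signs $(-1)^{p_{ij}p_{hk}}$ survive unchanged; they do, because the diagonal generators $x_{r,r}$ appearing inside $\sigma_\Phipicc$--evaluations are even ($p_{r,r}=\zero$), so the sign factors in the general cocycle-deformation formula trivialise. Conceptually the whole statement is immediate from Proposition \ref{prop: deformation-vs-duality}\textit{(a)} together with the computations \eqref{eq: formula x sigma_F}--\eqref{eq: formula x sigma-deformed-product} already carried out in \S\ref{free: constr.-FhPhiG}; the one point that deserves an explicit sentence is the observation above that the pairing values on $E_t$, $F_t$, $\varGamma_k$ are unaffected because the standard representation $V_n$ does not see the twist.
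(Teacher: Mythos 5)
Your proposal is correct and follows essentially the same route as the paper's own proof: take $\FhPhiglnp := \big(\Fhglnp\big)_{\sigma_\Phipicc}$ so that claim \textit{(f)} holds by construction, obtain the relations in \textit{(a)} (and claims \textit{(b)}, \textit{(c)}) by rewriting the relations of Theorem \ref{thm: pres-Fhg}\textit{(a)} through the deformed-product formula \eqref{eq: formula x sigma-deformed-product}, and deduce \textit{(d)}--\textit{(e)} from Theorem \ref{thm: pres-Fhg} together with Proposition \ref{prop: deformation-vs-duality}\textit{(a)} applied to $H = \Uhglnp$, $H^* = \Fhglnp$. Your additional remarks (completeness of the relation set via the PBW basis surviving the deformation up to invertible scalars, and the trivialisation of parity signs because the diagonal $x_{r{}r}$ are even) are correct refinements of points the paper leaves implicit.
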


\begin{proof}
 The key to the whole statement is claim  \textit{(f)},  with the latter   --- and then everything else --- following from the construction detailed in  \S \ref{free: constr.-FhPhiG}  above.
 \vskip5pt
   Let now see the details.  In  \S \ref{free: constr.-FhPhiG}  above we got an explicit, concrete description of the deformed Hopf superalgebra  $ \, {\big( \Fhglnp \big)}_{\sigma_\Phipicc} \, $.  In particular:
 \vskip3pt
   \textit{(1)}\;  as a (topological)  $ \kh $--coalgebra  it coincides with  $ \Fhglnp \, $,  by definition;
 \vskip3pt
   \textit{(2)}\;  the original generators  $ x_{i{}j} $  in  $ \, \Fhglnp = {\big( \Fhglnp \big)}_{\sigma_\Phipicc} \, $  still generate  $ {\big( \Fhglnp \big)}_{\sigma_\Phipicc} $  with respect to its new, deformed product  ``$ \, \raise-1pt\hbox{$ \; \scriptstyle \dot\sigma_\Phipicc $} \, $'';
 \vskip3pt
   \textit{(3)}\;  the link between the new product  ``$ \, \raise-1pt\hbox{$ \; \scriptstyle \dot\sigma_\Phipicc $} \, $''  and the old one  ``$ \, \cdot \, $''  is given by
\begin{equation}  \label{eq: formula x sigma-deformed-product - Phi-case BIS}
   \qquad   x_{r,\,s} \raise-1pt\hbox{$ \; \scriptstyle \dot\sigma_\Phipicc $}\, x_{\ell,\,t}  \; = \;
{\big( q_{\,r,\ell}^{\,+1} \, q_{\,s,t}^{\,-1} \big)}^{+1/2} \; x_{r,\,s} \cdot x_{\ell,\,t}
   \quad \qquad  \forall \;\; r , s , \ell , t \in \{1,\dots,n\}
\end{equation}
 where  $ \; q_{\,r,s}^{\,\pm 1} := e^{\,\pm \hbar \, \phi_{r,s}} = q^{\,\pm \phi_{r,s}} \; $.
 \vskip5pt
   Then we get a presentation for  $ \, {\big( \Fhglnp \big)}_{\sigma_\Phipicc} $  using as generators the  $ x_{i{}j} $'s  (for all  $ \, i \, , j \in I_n \, $)  and deducing a full set of relations among them by taking the original relations among them  \textsl{with respect to the old product and re-writing them in terms of the new one},  by means of repeated applications of  \eqref{eq: formula x sigma-deformed-product - Phi-case BIS}.
                                                                        \par
   In order to illustrate this on a concrete example, we perform the computation to deduce the las trelation in claim  \textit{(a)}   --- where the symbol  ``$ \, \raise-1pt\hbox{$ \; \scriptstyle \dot\sigma_\Phipicc $} \, $'' is omitted for simplicity ---   directly from the last relation in claim  \textit{(a)\/}  of  Theorem \ref{thm: pres-Fhg},  i.e.\ for the original, undeformed Hopf superalgebra  $ \Fhglnp \, $.  By repeated applications of  \eqref{eq: formula x sigma-deformed-product - Phi-case BIS},  direct calculations give (for  $ \, i < h \, $,  $ \, j < k \, $)
  $$  \displaylines{
   x_{i{}j} \raise-1pt\hbox{$ \; \scriptstyle \dot\sigma_\Phipicc $}\, x_{h{}k}
     \; {\buildrel \eqref{eq: formula x sigma-deformed-product - Phi-case BIS} \over =} \;
   {\big( q_{\,i,h}^{\,+1} \, q_{\,j,k}^{\,-1} \big)}^{+1/2} \; x_{i,\,j} \cdot x_{h,\,k}  \; =   \hfill  \cr
   = \;  q_{\,i,h}^{\,+1/2} \, q_{\,j,k}^{\,-1/2} \,
 \Big( {(-1)}^{p_{i{}j} p_{h{}k}\,} \, x_{h{}k} \cdot x_{i{}j} \, + {(-1)}^{{p_{i{}j} p_{i{}k}}} \big( q_{\,i}^{+1} - q_{\,i}^{-1} \big) \, x_{i{}k} \cdot x_{h{}j} \Big)  \; =   \hfill  \cr
   = \;  {(-1)}^{p_{i{}j} p_{h{}k}\,} q_{\,i,h}^{\;+1/2} \, q_{j,k}^{\;-1/2} \, x_{h{}k} \cdot x_{i{}j} \, + {(-1)}^{{p_{i{}j} p_{i{}k}}} \big( q_{\,i}^{+1} - q_{\,i}^{-1} \big) \, q_{\,i,h}^{\;+1/2} \, q_{j,k}^{\;-1/2} \, x_{i{}k} \cdot x_{h{}j}  \; {\buildrel \eqref{eq: formula x sigma-deformed-product - Phi-case BIS} \over =}  \cr
   \qquad   {\buildrel \eqref{eq: formula x sigma-deformed-product - Phi-case BIS} \over =} \;
   {(-1)}^{p_{i{}j} p_{h{}k}\,} q_{\,i,h}^{\;+1/2} \, q_{j,k}^{\;-1/2} \, q_{\,h,i}^{\;-1/2} \, q_{k,j}^{\;+1/2} \; x_{h{}k} \raise-1pt\hbox{$ \; \scriptstyle \dot\sigma_\Phipicc $}\, x_{i{}j}  \,\; +   \hfill  \cr
   \hfill   + \;\,  {(-1)}^{{p_{i{}j} p_{i{}k}}} \big( q_{\,i}^{+1} - q_{\,i}^{-1} \big) \, q_{\,i,h}^{\;+1/2} \, q_{j,k}^{\;-1/2} \, q_{\,i,h}^{\;-1/2} \, q_{k,j}^{\;+1/2} \; x_{i{}k} \raise-1pt\hbox{$ \; \scriptstyle \dot\sigma_\Phipicc $}\, x_{h{}j}  \; =   \qquad  \cr
   \hfill   = \;  {(-1)}^{p_{i{}j} p_{h{}k}\,} q_{\,i,h}^{\;+1} \, q_{j,k}^{\;-1} \; x_{h{}k} \raise-1pt\hbox{$ \; \scriptstyle \dot\sigma_\Phipicc $}\, x_{i{}j} \, + {(-1)}^{{p_{i{}j} p_{i{}k}}} \big( q_{\,i}^{+1} - q_{\,i}^{-1} \big) \, q_{j,k}^{\;-1} \; x_{i{}k} \raise-1pt\hbox{$ \; \scriptstyle \dot\sigma_\Phipicc $}\, x_{h{}j}  }  $$
 which eventually, dropping the symbols  ``$ \, \raise-1pt\hbox{$ \; \scriptstyle \dot\sigma_\Phipicc $} \, $'',  yields the expected formula, q.e.d.
                                                                         \par
   All other relations are obtained through a similar process.  By this analysis, we get that  $ \, \FhPhiglnp := {\big( \Fhglnp \big)}_{\sigma_\Phipicc} \, $  does fulfil claims  \textit{(a)},  \textit{(b)},  \textit{(c)\/}  and  \textit{(f)}.
 \vskip5pt
   We are left now with claims  \textit{(d)\/}  and  \textit{(e)}.  In fact, both follow from the parallel claims in  Theorem \ref{thm: pres-Fhg}  and a direct application of  Proposition \ref{prop: deformation-vs-duality}\textit{(a)},  namely to  $ \, H := \Uhglnp \, $  and  $ \, H^* = \Fhglnp \, $.
\end{proof}

\vskip7pt

\begin{definition}  \label{def: QFSHSA FhPhiglnp}
 We call the (topological) Hopf superalgebra  $ \FhPhiglnp $  introduced in  Theorem \ref{thm: pres-Fhg}  above the  \textit{multiparametric quantized formal series Hopf superalgebra (or just ``QFSHSA'', in short) associated with the Poisson supergroup  $ \textit{GL}_{\,n}^{\,p} \, $  and the multiparameter  $ \, \Phi := {\big( \phi_{i{}j} \big)}_{i \in I_n}^{j \in I_n} \in \lieso_n\big( \kh \big) \, $}.   \hfill  $ \diamond $
\end{definition}

\vskip11pt

\begin{free text}  \label{free: semiclassical}
 \textbf{Specialisation to semiclassical limit.}
 When dealing with the QFSHSA's  $ \Fhglnp $  and  $ \FhPhiglnp \, $,  only one ``specialisation'' of the quantum parameter  $ \hbar $  is possible, namely the one yielding the ``semiclassical limits''
  $$  F_0\big[\big[{GL}_{\,n}^{\,p}\big]\big]  \; := \;  \Fhglnp \Big/ \hbar \, \Fhglnp  $$
 in the first case and
  $$  F_0^\Phipicc\big[\big[{GL}_{\,n}^{\,p}\big]\big]  \; := \;  \FhPhiglnp \Big/ \hbar \, \FhPhiglnp  $$
 in the second one.  One easily sees that
  $$  F_0\big[\big[{GL}_{\,n}^{\,p}\big]\big]  \; \cong \;  F\big[\big[{GL}_{\,n}^{\,p}\big]\big]   \qquad  \text{and}  \qquad   F_0^\Phipicc\big[\big[{GL}_{\,n}^{\,p}\big]\big]  \; \cong \;  F\big[\big[{GL}_{\,n}^{\,p}\big]\big]  $$
 as Hopf superalgebras over  $ \k \, $,  where  $ F\big[\big[{GL}_{\,n}^{\,p}\big]\big] $  is the ``algebra of functions on the formal algebraic group'' associated with  $ {GL}_{\,n}^{\,p} \, $.  There is, however, a difference: indeed, from its ``uniparametric'' quantisation  $ \Fhglnp $  the Hopf superalgebra  $ F\big[\big[{GL}_{\,n}^{\,p}\big]\big] $  inherits in addition a Poisson bracket  $ \{\,\ ,\ \} $   --- turning  $ {GL}_{\,n}^{\,p} $  into a ``formal Poisson group'' ---   which is different from the bracket  $ {\{\,\ ,\ \}}_\Phi $ inherited from the ``multiparametric'' quantisation  $ \FhPhiglnp \, $.  For instance, computations give
  $$  \big\{ x_{i{}j} \, , x_{i{}k} \big\} \, = \, {(-1)}^{p_{i{}j} \, p_{i{}k}} \, {(-1)}^{p(i)}  \quad  ,  \qquad  {\big\{ x_{i{}j} \, , x_{i{}k} \big\}}_\Phi \, = \, {(-1)}^{p_{i{}j} \, p_{i{}k}} \big( {(-1)}^{p(i)} - \phi_{j{}k} \big)  $$
 for all  $ \, i \, , j \, , k \in I_n \, $  with  $ \, j < k \, $.  In a nutshell, this proves that the QFSHA's  $ \Fhglnp $  and  $ \FhPhiglnp $   --- for different  $ \phi $'s  ---   are all quantisations of one and the same formal group  (over  $ {GL}_{\,n}^{\,p} \, $),  but they endow this shared semiclassical limit with different Poisson structures, depending on the different multiparameters  $ \Phi \, $.
\end{free text}

\vskip9pt

   We finish this subsection with a PBW-like theorem, that is the multiparametric counterpart of  Theorem \ref{thm: PBW x Fhglnp}  above (which deals with the uniparametric case): indeed, the roof follows exactly the same line of reasoning, with the same arguments, so we do not need to replicate it again.

\vskip11pt

\begin{theorem}  \label{thm: PBW x FhPhiglnp}
 \textsl{(PBW Theorem for  $ \FhPhiglnp \, $)}
 Let us fix any total order in the set  $ \, \big\{\, x_{i{}j} \,\big|\, i , j \in I_n \,\big\} \, $  of generators of  $ \FhPhiglnp \, $.  Then the set of all ordered monomials in the  $ x_{i{}j} $'s,  namely
  $$  \bigg\{\, {\textstyle \mathop{\overrightarrow{\prod}}\limits_{i, j \in I_n}} \hskip-3pt x_{i{}j}^{\,e_{i{}j}} \,\bigg|\, e_{i{}j} \in \NN \, , \, \forall \; i \, , j \in \! I_n \, , \; e_{i{}j} \leq 1 \text{\;\ if\;\ }  p_{i,j} = \one \,\bigg\}  $$
 is a\/  $ \kh $--basis  (in topological) of the\/  $ \kh $--module  $ \FhPhiglnp \, $.   \qed
\end{theorem}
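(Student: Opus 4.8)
The plan is to prove this exactly as Theorem~\ref{thm: PBW x Fhglnp}, by reducing everything to the semiclassical limit. Write for brevity $F_\hbar^\Phipicc := \FhPhiglnp$ and let $F_0^\Phipicc := F_\hbar^\Phipicc \big/ \hbar\,F_\hbar^\Phipicc$ be its semiclassical limit. By the discussion in~\S~\ref{free: semiclassical}, $F_0^\Phipicc$ is isomorphic, as a topological Hopf superalgebra over $\k$, to $F\big[\big[{GL}_{\,n}^{\,p}\big]\big] = {U\big(\liegl_{\,n}^{\,p}\big)}^*$, the ``formal superalgebra of functions'' on $GL_{\,n}^{\,p}$, in such a way that the coset $\overline{x}_{i{}j}$ of $x_{i{}j}$ modulo $\hbar\,F_\hbar^\Phipicc$ corresponds to the matrix coefficient dual to the elementary matrix $\textrm{e}_{i{}j} \in \liegl_{\,n}^{\,p}$. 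The crucial point is that this identification is totally independent of $\Phi$: indeed all the multiparameters $q_{\,r,s} = e^{\hbar\,\phi_{r,s}}$ reduce to $1$ modulo $\hbar$, so that $F_0^\Phipicc$ is a \emph{(super)commutative} superbialgebra coinciding with the uniparametric semiclassical limit $F_0 := \Fhglnp \big/ \hbar\,\Fhglnp$ treated in the proof of Theorem~\ref{thm: PBW x Fhglnp}.

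Next I would invoke the classical fact that the set of truncated ordered monomials $\overrightarrow{\prod}_{i,j\in I_n} \overline{x}_{i{}j}^{\;e_{i{}j}}$, with $e_{i{}j} \in \NN$ and $e_{i{}j} \leq 1$ whenever $p_{i{}j} = \one$, is a topological $\k$--basis of $F\big[\big[{GL}_{\,n}^{\,p}\big]\big]$ --- for \emph{any} chosen total order on the $\overline{x}_{i{}j}$'s, since the algebra is supercommutative. This can be seen either by duality from the PBW basis of $U\big(\liegl_{\,n}^{\,p}\big)$, or directly from the fact that $F\big[\big[{GL}_{\,n}^{\,p}\big]\big]$ is the completion of the free supercommutative $\k$--algebra on the $\overline{x}_{i{}j}$'s --- i.e.\ the symmetric algebra on the even generators tensored with the exterior algebra on the odd ones, the latter accounting exactly for the truncation $e_{i{}j}\leq 1$ when $p_{i{}j}=\one$; this matches precisely the relation $x_{i{}j}^2 = 0$ (for $p_{i{}j}=\one$) of Theorem~\ref{thm: pres-FhPhiglnp}\textit{(a)}.

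Finally I would run the standard $\hbar$--adic approximation argument, verbatim as in the proof of Theorem~\ref{thm: PBW x Fhglnp}. Since $I_\hbar \supseteq \hbar\,F_\hbar^\Phipicc$ and $F_\hbar^\Phipicc$ is $I_\hbar$--adically complete by Theorem~\ref{thm: pres-FhPhiglnp}\textit{(b)}, it is in particular $\hbar$--adically complete. Given $\psi \in F_\hbar^\Phipicc$, expand its coset $\overline{\psi}$ in $F_0^\Phipicc$ uniquely as $\sum_{\underline{e}} c^{\,(0)}_{\,\underline{e}}\,\overrightarrow{\prod}_{i,j}\overline{x}_{i{}j}^{\;e_{i{}j}}$; lift to get $\psi = \sum_{\underline{e}} c^{\,(0)}_{\,\underline{e}}\,\overrightarrow{\prod}_{i,j}x_{i{}j}^{\;e_{i{}j}} + \hbar\,\psi_1$ with $\psi_1 \in F_\hbar^\Phipicc$; iterate with $\psi_1$ in place of $\psi$; and in the limit obtain the convergent expansion $\psi = \sum_{\underline{e}} \big(\sum_{n\in\NN} c^{\,(n)}_{\,\underline{e}}\,\hbar^n\big)\,\overrightarrow{\prod}_{i,j}x_{i{}j}^{\;e_{i{}j}}$. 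Existence of such an expansion gives topological spanning, while uniqueness of the coefficients --- inherited from the uniqueness at each step together with $\hbar$--adic separatedness of $F_\hbar^\Phipicc$ --- gives topological linear independence; hence the truncated ordered monomials form a topological $\kh$--basis.

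I do not expect any genuine obstacle here: the entire content is already in place. The non-formal input --- the identification of $F_0^\Phipicc$ with the classical function algebra $F\big[\big[{GL}_{\,n}^{\,p}\big]\big]$, irrespective of $\Phi$ --- is precisely what was established in~\S~\ref{free: semiclassical}, and the remaining $\hbar$--adic lifting is literally the manipulation already carried out in the proof of Theorem~\ref{thm: PBW x Fhglnp}. One could equally argue directly from Theorem~\ref{thm: PBW x Fhglnp} by tracking how an ordered monomial in the $x_{i{}j}$'s transforms under the passage from the undeformed product to the deformed one via repeated use of~\eqref{eq: formula x sigma-deformed-product - Phi-case BIS} (which only introduces invertible scalar factors in $\kh$), since $\FhPhiglnp$ and $\Fhglnp$ share the same underlying $\kh$--module. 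Either way, the only point deserving a line of care is checking that the truncation condition on the exponents matches exactly the odd-square relations of Theorem~\ref{thm: pres-FhPhiglnp}\textit{(a)} --- which is immediate, and is in any case verified at the (super)commutative semiclassical level.
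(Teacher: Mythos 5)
Your argument is correct and is essentially the paper's own: the paper explicitly declares that the proof of this theorem follows the same line of reasoning as Theorem~\ref{thm: PBW x Fhglnp} (semiclassical identification of the quotient modulo $\hbar$ with $F\big[\big[{GL}_{\,n}^{\,p}\big]\big]$, independence of $\Phi$ at that level, then the $\hbar$--adic lifting), which is exactly what you carry out. Your closing remark about deducing the result instead from Theorem~\ref{thm: PBW x Fhglnp} via the 2--cocycle rescaling of monomials is also sound --- it is the route the paper itself takes later for the polynomial multiparametric case (Theorem~\ref{thm: PBW x Fbqmlnp}).
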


\bigskip
 \vskip13pt

\section{Polynomial versions of quantum general linear supergroups}  \label{sec: polyn-QFSA's}
 \vskip7pt
   In this section we introduce ``polynomial versions'', so to speak, of the QFSHSA's considered in  \S \ref{sec: mp-QFSHSA's},  both in the uniparametric and in the multiparametric setup.

\vskip11pt

\subsection{Yamane's polynomial QUESA's for the general linear supergroup}  \label{subsec: polyn-QUESA x Yamane}
 Just like in the non-super framework, from Yamane's QUESA  $ \Uhglnp $  as in  \S \ref{subsec: Yamane's QUESA}  one can pull out a ``polynomial version'', i.e.\ a Jimbo-Lusztig version of it: the main difference is that, while  $ \Uhglnp $  is a  \textsl{formal\/}  Hopf superalgebra   --- over  $ \kh $  ---   its polynomial version is instead a usual, non-formal Hopf superalgebra.
                                                                        \par
   We retain terminology and notation as in  \S \ref{subsec: Yamane's QUESA}.  In addition, we let  $ \kqqm $  be the  $ \k $--algebra  Laurent polynomials in the indeterminate  $ q $  with coefficients in  $ \k \, $:  \,its field of fractions is the field  $ \k(q) $  of rational functions in  $ q $  with coefficients in  $ \k \, $.

\vskip9pt

\begin{definition}  \label{def: ration-QUESA}
 We define  \textsl{rational}  \textit{QUESA over  $ \, \liegl_{\,n}^{\,p} \, $},  denoted by  $ \, \Uqglnp \, $,  the associative, unital  $ \k(q) $--superalgebra  with generators
  $$  E_1 \, , \, E_2 \, , \, \dots \, , \, E_{n-1} \, , \, L_1^{\pm 1} \, , \, L_2^{\pm 1} \, , \, \dots \, , \, L_{n-1}^{\pm 1} \, , \, L_n^{\pm 1} \, ,  \, F_1 \, , \, F_2 \, , \, \dots \, , \, F_{n-1}  $$
 having parity  $ \; |E_r| := p_{r,r+1} \; $,  $ \; \big|L_s^{\pm 1}\big| := 0 \; $,  $ \; |F_r| := p_{r+1,r} \; $   --- for all  $ \, r \in I_{n-1} \, $  and  $ \, s \in I_n \, $  ---   and relations (for all  $ \, i \, , j \in I_{n-1} := \{1, \dots , n-1\} \, $,  $ \, k \, , \ell \in I_n := \{1, \dots , n\} \, $)
\begin{equation}  \label{eq: pol-UqgR1}
  \begin{gathered}
    \big[ L_k^{\pm 1} \, , L_\ell^{\pm 1} \,\big]  \; = \;  0  \quad ,
    \qquad  L_k^{\pm 1} \, L_k^{\mp 1}  \; = \;  1
  \end{gathered}
\end{equation}
\begin{equation}  \label{eq: pol-UqgR2}
  \begin{gathered}
    L_k^{\,\pm 1} F_j \, L_k^{\,\mp 1}  \; = \;  q^{\pm(\delta_{k,j+1} - \delta_{k,j})} \, F_j  \quad ,   \qquad  L_k^{\,\pm 1} E_j \, L_k^{\,\mp 1}  \; = \;  q^{\pm(\delta_{k,j} - \delta_{k,j+1})} \, E_j \, = \, 0  \\
  \end{gathered}
\end{equation}
\begin{equation}  \label{eq: pol-UqgR3}
  \begin{gathered}
    [E_i \, , F_j] \, - \, \delta_{i,j} \, \frac{\; K_i^{+1} - K_i^{-1} \;}{\; q_i^{+1} - q_i^{-1} \;}  \; = \;  0
  \end{gathered}
\end{equation}
\begin{equation}  \label{eq: pol-UqgR4}
  \begin{gathered}
    \quad   E_i^2 \, = \, 0 \; ,  \quad  F_i^2 \, = \, 0   \quad \qquad  \text{if \ }  p_{i,\,i+1} = \one = p_{i+1,\,i}
  \end{gathered}
\end{equation}
\begin{equation}  \label{eq: pol-UqgR5}
  \begin{gathered}
    [E_i \, , E_j] \, = \, 0  \quad ,   \qquad   [F_i \, , F_j] \, = \, 0  \qquad  \text{if \ }  |i-j| > 1
  \end{gathered}
\end{equation}
\begin{equation}  \label{eq: pol-UqgR6}
  \begin{gathered}
    \hskip-5pt  E_i^2 \, E_j \, - \, \big( q + q^{-1} \big) \, E_i \, E_j \, E_i \, + \, E_j \, E_i^2 \, = \, 0   \qquad  \text{if \ }  p_{i,\,i+1} = \zero  \text{\;\ \ and \;\ }  |i-j| = 1  \\
    \hskip-0pt  F_i^2 \, F_j - \big( q + q^{-1} \big) \, F_i \, F_j \, F_i \, + \, F_j \, F_i^2 \, = \, 0   \;\qquad  \text{if \ }  p_{i+1,\,i} = \zero  \text{\;\ \ and \;\ }  |i-j| = 1
  \end{gathered}
\end{equation}
\begin{equation}  \label{eq: pol-UqgR7}
  \begin{gathered}
    \big[ \big[ {[E_i,E_j]}_{q_j} , E_k \big]_{q_{j+1}} , E_j \big] \, = \, 0   \qquad  \text{\; if \ }  p_{j,\,j+1} = \one  \text{\;\ \ and \;\ }  k = j+1 = i+2  \\
    \big[ \big[ {[F_i,F_j]}_{q_j} , F_k \big]_{q_{j+1}} , E_j \big] \, = \, 0   \qquad  \text{\; if \ }  p_{j+1,\,j} = \one  \text{\;\ \ and \;\ }  k = j+1 = i+2
  \end{gathered}
\end{equation}
where hereafter we use such notation as
  $$  \displaylines{
   K_i^{\pm 1}  \, := \,  {\Big( L_i^{{(-1)}^{p(i)}} L_{i+1}^{{(-1)}^{p(i+1)}} \Big)}^{\pm 1}  \cr
   \hfill   {[A\,,B]}_c  \, := \,  A B - c \, {(-1)}^{|A|\,|B|} B A  \quad ,   \qquad   [A\,,B]  \, := \,  {[A\,,B]}_1   \hfill   \diamond  }  $$
\end{definition}

\vskip9pt

   The key fact about the superalgebra  $ \Uqglnp $  is the following easy result:

\vskip11pt

\begin{theorem}  \label{thm: Hopf-struct x Uqglnp}
 There exists a unique structure of Hopf  $ \, \kq $--superalgebra  on the superalgebra  $ \Uqglnp $  which is described by the following formulas on generators (for all  $ \, i \in I_{n-1} \, , \; k \in I_n \, $):
  $$  \displaylines{
   \Delta(E_i)  \; = \;  E_i \otimes 1 + K_i^{+1} \otimes E_i \; ,  \qquad  S(E_i)  \; = \;  -K_i^{-1} \, E_i \; ,  \qquad  \epsilon(E_i) \; = \; 0  \cr
   \hskip17pt  \Delta\big(L_k^{\pm 1}\big)  \; = \;  L_k^{\pm 1} \otimes L_k^{\pm 1}  \;\; ,  \hskip24pt \qquad S\big(L_k^{\pm 1}\big) \; = \; L_k^{\mp 1}  \;\; ,   \hskip13pt \quad  \epsilon\big(L_k^{\pm 1}\big)  \; = \;  1  \cr
   \Delta(F_i)  \; = \;  F_i\otimes K_i^{-1} + 1 \otimes F_i \; ,  \;\qquad  S(F_i) \; = \; - F_i \, K_i^{+1} \; ,  \;\qquad  \epsilon(F_i) \; = \; 0  }  $$
\end{theorem}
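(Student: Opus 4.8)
The plan is the standard one for quantum groups. \emph{Uniqueness} is immediate: $\Uqglnp$ is generated as a $\kq$-superalgebra by the $E_i$, the $L_k^{\pm 1}$ and the $F_i$, so the values of $\Delta$ and $\epsilon$ on these generators determine the bialgebra structure completely, and the antipode of a Hopf algebra is then uniquely determined by $\Delta$ and $\epsilon$; hence the entire content of the statement is \emph{existence}. To establish it I would write $\Uqglnp = \mathbb{U}\big/\mathcal{R}$, where $\mathbb{U}$ is the free unital $\kq$-superalgebra on the symbols $E_i, L_k^{\pm1}, F_i$ (with the stated parities) and $\mathcal{R}$ is the two-sided ideal generated by the left-hand sides of \eqref{eq: pol-UqgR1}--\eqref{eq: pol-UqgR7}, and then produce $\Delta$, $\epsilon$, $S$ first on $\mathbb{U}$ and show that each factors through $\mathcal{R}$.

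For the counit, the assignment $E_i, F_i \mapsto 0$, $L_k^{\pm1}\mapsto 1$ extends uniquely to a morphism of unital $\kq$-superalgebras $\widetilde{\epsilon}:\mathbb{U}\to\kq$, and this map annihilates every generator of $\mathcal{R}$, since each of \eqref{eq: pol-UqgR1}--\eqref{eq: pol-UqgR7} becomes a tautology ($0=0$ or $1=1$) after the substitution $E\mapsto 0$, $F\mapsto 0$, $L^{\pm1}\mapsto 1$; hence $\widetilde{\epsilon}$ descends to $\epsilon:\Uqglnp\to\kq$. For the coproduct, endow $\Uqglnp\otimes_{\kq}\Uqglnp$ with its super-tensor-product algebra structure, in which $(a\otimes b)(c\otimes d)=(-1)^{|b||c|}ac\otimes bd$; the assignment $E_i\mapsto E_i\otimes 1 + K_i^{+1}\otimes E_i$, $L_k^{\pm1}\mapsto L_k^{\pm1}\otimes L_k^{\pm1}$, $F_i\mapsto F_i\otimes K_i^{-1}+1\otimes F_i$ extends uniquely to a morphism of unital $\kq$-superalgebras $\widetilde{\Delta}:\mathbb{U}\to\Uqglnp\otimes\Uqglnp$, and the real task is to verify that $\widetilde{\Delta}$ kills each generator of $\mathcal{R}$, so that it descends to $\Delta:\Uqglnp\to\Uqglnp\otimes\Uqglnp$. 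The elementary inputs here are that the $K_i^{\pm1}=\big(L_i^{(-1)^{p(i)}}L_{i+1}^{(-1)^{p(i+1)}}\big)^{\pm1}$ are grouplike and $q$-commute with the $E_j$ and $F_j$ as prescribed by \eqref{eq: pol-UqgR2}, which reduces \eqref{eq: pol-UqgR1}, \eqref{eq: pol-UqgR2}, \eqref{eq: pol-UqgR3} and \eqref{eq: pol-UqgR4} to short direct computations; the genuinely delicate cases are the $q$-Serre relations \eqref{eq: pol-UqgR6} and the fourth-order relations \eqref{eq: pol-UqgR7}, whose preservation under $\widetilde{\Delta}$ rests on the familiar $q$-binomial identities, now carried out with the Koszul signs tracked through the parities $p_{i,i+1}$. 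Finally, for the antipode one works with the opposite superalgebra $\Uqglnp^{\mathrm{op}}$ (product $a *_{\mathrm{op}} b = (-1)^{|a||b|} ba$): the assignment $E_i\mapsto -K_i^{-1}E_i$, $L_k^{\pm1}\mapsto L_k^{\mp1}$, $F_i\mapsto -F_iK_i^{+1}$ extends uniquely to an algebra morphism $\mathbb{U}\to\Uqglnp^{\mathrm{op}}$, which one checks kills $\mathcal{R}$, hence descends to a superalgebra anti-homomorphism $S:\Uqglnp\to\Uqglnp$.

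Once $\Delta$ and $\epsilon$ are algebra morphisms and $S$ is an algebra anti-morphism, the remaining Hopf axioms — coassociativity $(\Delta\otimes\id)\Delta=(\id\otimes\Delta)\Delta$, the counit law $(\epsilon\otimes\id)\Delta=\id=(\id\otimes\epsilon)\Delta$, and the antipode identity $m\circ(S\otimes\id)\circ\Delta=\eta\circ\epsilon=m\circ(\id\otimes S)\circ\Delta$ — need only be checked on the generators $E_i$, $L_k^{\pm1}$, $F_i$, since all the maps in sight are algebra (anti)morphisms; each check is a one-line computation (e.g.\ $m(S\otimes\id)\Delta(E_i)=S(E_i)\cdot 1+S(K_i^{+1})\cdot E_i=-K_i^{-1}E_i+K_i^{-1}E_i=0=(\eta\circ\epsilon)(E_i)$, using $S(K_i^{+1})=K_i^{-1}$). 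Together with the uniqueness remark, this proves the theorem.

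The main obstacle is the single non-formal step: checking that $\widetilde{\Delta}$ respects the $q$-Serre relations \eqref{eq: pol-UqgR6} and the higher relations \eqref{eq: pol-UqgR7}. This is where the $q$-commutator and $q$-binomial identities and the super-sign bookkeeping genuinely interact, and it runs exactly parallel to the corresponding (quoted) verification for the formal QUESA underlying Theorem \ref{thm: Hopf-struct x Uhglnp}; in practice one can simply transport that computation, since the two presentations differ only in replacing the primitive $\varGamma_k$'s and their exponentials $e^{\pm\hbar H_i}$ by the grouplike $L_k^{\pm1}$'s and $K_i^{\pm1}$'s.
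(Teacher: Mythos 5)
Your proposal is correct in outline, but it takes a genuinely different route from the paper. You construct the Hopf structure from scratch: define $\widetilde\Delta$, $\widetilde\epsilon$, $\widetilde S$ on the free superalgebra, check they annihilate the ideal $\mathcal{R}$ of defining relations, and then verify the Hopf axioms on generators. The paper instead proves the theorem by \emph{inheritance}: it realises $\Uqglnp$ as the $\kq$--subsuperalgebra $U_q^{\,\prime}$ of the scalar extension $\k(\!(\hbar)\!)\otimes_{\kh}\Uhglnp$ generated by the $E_i$, $F_i$ and $L_k^{\pm1}:=\exp(\pm\hbar\,\varGamma_k)$, observes that the presentation of Definition \ref{def: ration-QUESA} is induced by that of $\Uhglnp$, and then notes that the coproduct, counit and antipode of Yamane's formal QUESA (Theorem \ref{thm: Hopf-struct x Uhglnp}) restrict to $U_q^{\,\prime}$, land in the algebraic tensor product $U_q^{\,\prime}\otimes_{\k(q)}U_q^{\,\prime}$, and are given on generators by the stated formulas — so all the delicate compatibilities come for free. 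What your route buys is self-containedness: no appeal to the formal QUESA and no scalar-extension bookkeeping; its cost is that the one genuinely non-formal step — showing that $\widetilde\Delta$ (and $S$) respect the Serre relations \eqref{eq: pol-UqgR6} and the quartic relations \eqref{eq: pol-UqgR7}, with the super signs tracked — is only outlined, and your closing suggestion to ``transport'' that verification from the formal case is in effect a pointwise version of the paper's embedding argument. Conversely, the paper's route avoids any computation with the Serre relations, but it silently relies on the claim that $U_q^{\,\prime}$ has \emph{exactly} the presentation of Definition \ref{def: ration-QUESA} (no extra relations appear in the subalgebra), a point your direct construction never needs. Either way the uniqueness part is as you say: $\Delta$ and $\epsilon$ are algebra maps determined on generators, and the antipode is the convolution inverse of the identity, hence unique.
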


\begin{proof}
 Consider the field  $ \k(\!(\hbar)\!) $  of Laurent series in  $ \hbar \, $,  which is the quotient field of  $ \kh \, $.  Take in  $ \kh $  the element  $ \, q := \exp(\hbar) \, $:  \,then the subfield  $ \, \k(q) \, $  of  $ \k(\!(\hbar)\!) $  is an isomorphic copy of the field of rational functions in one indeterminate with coefficients in $ \k \, $.  Let  $ \, \k(\!(\hbar)\!) \otimes_\kh \Uhglnp \, $  be the scalar extension   --- from  $ \kh $  to  $ \k(\!(\hbar)\!) $  ---   of  $ \Uhglnp \, $:  inside it, we consider the unital  $ \k(q) $--subsuperalgebra  generated by the set  $ \, \big\{\, E_i \, , \, L_k^{\pm 1} := \exp(\pm \hbar \, \varGamma_k) \, , \, F_i \;\big|\; i \in I_{n-1} \, , \, k \in I_n \,\big\} \, $,  \,denoted by  $ U_q^{\,\prime} \, $.  Then the presentation of  $ \Uhglnp $  by generators and relations induces at once a presentation for  $ U_q^{\,\prime} $  with the given generators and with relations from  \eqref{eq: pol-UqgR1}  through  \eqref{eq: pol-UqgR7}.  Therefore,  $ U_q^{\,\prime} $  is isomorphic to  $ \Uqglnp $  as a  $ \kq $--superalgebra.  Finally, it is immediate to see that the coproduct, counit and antipode maps for  $ \Uhglnp $  automatically induce similar, well-defined maps for  $ U_q^{\,\prime} \, $,  that are explicitly described on generators by the formulas in the statement above.  In particular, the coproduct map takes values in the standard, algebraic tensor product  $ \, U_q^{\,\prime} \otimes_{\k(q)} U_q^{\,\prime} \, $:  \,therefore  $ U_q^{\,\prime} $   --- hence  $ \Uqglnp $  as well ---   is a (standard) Hopf algebra over  $ \k(q) \, $,  as all required conditions are automatically fulfilled as they were by the original structure maps in  $ \Uhglnp \, $.
\end{proof}

\vskip5pt

   For later use, we need also an ``integral'' version of  $ \Uqglnp \, $,  defined as follows:

\vskip9pt

\begin{definition}  \label{def: integr-QUESA}
 Inside  $ \Uqglnp \, $,  consider the elements
 \vskip3pt
   \centerline{ $ \displaystyle{ \varTheta_i  \,\; := \;\,  {{\, L_i^{+{(-1)}^{p(i)}} L_{i+1}^{-{(-1)}^{p(i+1)}} \! - \, L_i^{-{(-1)}^{p(i)}} L_{i+1}^{+{(-1)}^{p(i+1)}} \,} \over {\, q_i^{+1} - q_i^{-1} \,}}   \qquad \qquad  \forall \;\; i \in I_{n-1} } $ }
 \vskip3pt
   We define  \textsl{integral}  \textit{QUESA over  $ \, \liegl_{\,n}^{\,p} \, $},  denoted by  $ \, \Uqintglnp \, $,  the unital subsuperalgebra  over  $ \kqqm $  generated by  $ \; {\big\{\, E_i \, , L_k^{\pm 1} , \varTheta_i \, , F_i \;\big|\; i \in I_{n-1} \, , \, k \in I_n \,\big\}} \; $.
\end{definition}

\vskip7pt

   The following is an immediate consequence of  Theorem \ref{thm: Hopf-struct x Uqglnp}  above:

\vskip11pt

\begin{prop}  \label{prop: Uqintglnp-is-Hopf}
 $ \Uqintglnp $  is a  \textsl{Hopf}  subsuperalgebra   --- over  $ \kqqm $  ---   of the Hopf $ \, \k(q) $--superalgebra  $ \Uqglnp \, $.   \qed
\end{prop}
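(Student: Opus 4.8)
The plan is to exploit that, by Definition~\ref{def: integr-QUESA}, $\Uqintglnp$ is \emph{by construction} the unital $\kqqm$-subsuperalgebra of $\Uqglnp$ generated by the (homogeneous) finite set $\Sigma := \big\{\, E_i \,,\, L_k^{\pm 1} \,,\, \varTheta_i \,,\, F_i \;\big|\; i \in I_{n-1} \,,\, k \in I_n \,\big\}$. Thus it is automatically a $\kqqm$-subsuperalgebra (it is generated by homogeneous elements, hence a graded $\kqqm$-submodule closed under multiplication and containing $1$), and the whole content of the statement is that it is stable under the Hopf structure maps of $\Uqglnp$ provided by Theorem~\ref{thm: Hopf-struct x Uqglnp}. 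Since $\Delta$ and $\epsilon$ are superalgebra morphisms and $S$ is a superalgebra anti-morphism, it is enough to check, for each $\sigma \in \Sigma$, that $\epsilon(\sigma) \in \kqqm$, that $S(\sigma) \in \Uqintglnp$, and that $\Delta(\sigma)$ lies in the image of the natural map $\Uqintglnp \otimes_{\kqqm} \Uqintglnp \longrightarrow \Uqglnp \otimes_{\kq} \Uqglnp$. I would note, once and for all, that this last map is injective, so that no topological completion intervenes: indeed $\Uqglnp$ is an honest (non-formal) Hopf algebra over the field $\kq$ — so its coproduct already takes values in the algebraic tensor product — while $\kqqm$ is a principal ideal domain and $\Uqintglnp$, $\Uqglnp$ and $\Uqglnp \otimes_{\kqqm} \Uqglnp$ are all torsion-free, hence flat, over $\kqqm$; tensoring the inclusion $\Uqintglnp \hookrightarrow \Uqglnp$ over $\kqqm$ twice and then localising to $\kq$ preserves injectivity.

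For the generators $E_i$, $F_i$ and $L_k^{\pm 1}$ everything is read off directly from Theorem~\ref{thm: Hopf-struct x Uqglnp}: each $K_i^{\pm 1}$ appearing there is a monomial in the $L_k^{\pm 1}$, hence already lies in $\Uqintglnp$, so that $\Delta(E_i) = E_i \otimes 1 + K_i^{+1} \otimes E_i$, $S(E_i) = -K_i^{-1} E_i$ and $\epsilon(E_i) = 0$ all stay inside the integral form; likewise for $F_i$; and $L_k^{\pm 1}$ is group-like with $\Delta\big(L_k^{\pm 1}\big) = L_k^{\pm 1} \otimes L_k^{\pm 1}$, $S\big(L_k^{\pm 1}\big) = L_k^{\mp 1} \in \Uqintglnp$ and $\epsilon\big(L_k^{\pm 1}\big) = 1 \in \kqqm$. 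None of this requires any work.

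The one step that is not purely formal — and the reason the element $\varTheta_i$ was put among the integral generators in the first place — concerns $\varTheta_i$ itself. Set $M_i := L_i^{+{(-1)}^{p(i)}} L_{i+1}^{-{(-1)}^{p(i+1)}}$, an invertible group-like element of $\Uqintglnp$, so that $\varTheta_i = \big( M_i - M_i^{-1} \big) \big/ \big( q_i^{+1} - q_i^{-1} \big)$. Here the scalar $\big( q_i^{+1} - q_i^{-1} \big)^{-1}$ does \emph{not} lie in $\kqqm$, so distributing $\Delta$ term by term does not immediately land in $\Uqintglnp \otimes_{\kqqm} \Uqintglnp$; the remedy is the group-like splitting identity $M_i \otimes M_i - M_i^{-1} \otimes M_i^{-1} = \big( M_i - M_i^{-1} \big) \otimes M_i + M_i^{-1} \otimes \big( M_i - M_i^{-1} \big)$, which upon division by $q_i^{+1} - q_i^{-1}$ gives $\Delta(\varTheta_i) = \varTheta_i \otimes M_i + M_i^{-1} \otimes \varTheta_i \in \Uqintglnp \otimes_{\kqqm} \Uqintglnp$. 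In the same vein $\epsilon(\varTheta_i) = \big( 1 - 1 \big) \big/ \big( q_i^{+1} - q_i^{-1} \big) = 0 \in \kqqm$ and $S(\varTheta_i) = \big( M_i^{-1} - M_i \big) \big/ \big( q_i^{+1} - q_i^{-1} \big) = -\,\varTheta_i \in \Uqintglnp$. This closes the verification on $\Sigma$, hence proves that $\Uqintglnp$ is a Hopf subsuperalgebra of $\Uqglnp$ over $\kqqm$. The only point needing a moment's thought is this $\varTheta_i$ computation — recognising that the non-integral denominator is reabsorbed once $\Delta(\varTheta_i)$ is re-expressed through $\varTheta_i$ itself; everything else is bookkeeping.
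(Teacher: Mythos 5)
Your proposal is correct, and it is essentially the argument the paper has in mind: the paper states the proposition as an immediate consequence of Theorem~\ref{thm: Hopf-struct x Uqglnp} and writes no proof, so the content is exactly the generator-by-generator check you carry out. Your treatment of $\varTheta_i$ --- rewriting $\Delta(\varTheta_i) = \varTheta_i \otimes M_i + M_i^{-1} \otimes \varTheta_i$ via the group-like splitting so that the denominator $q_i^{+1}-q_i^{-1}$ is reabsorbed, together with $S(\varTheta_i)=-\varTheta_i$ and $\epsilon(\varTheta_i)=0$ --- is precisely the one non-trivial point being implicitly invoked, and your flatness remark justifying $\Uqintglnp \otimes_{\kqqm} \Uqintglnp \hookrightarrow \Uqglnp \otimes_{\kq} \Uqglnp$ is a sound (if tacit in the paper) piece of bookkeeping.
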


\vskip13pt

\subsection{Polynomial QFSA's for the general linear supermonoid}  \label{subsec: Mp-polyn-QFSA's x super-MLn}
 Let us consider the unital  $ \, \k $--superalgebra  $ \, \MLnp := \textit{Mat}_{\,n \times n}(\k) \, $  of all square matrices of size  $ n $  with entries in  $ \k \, $,  with  $ \ZZ_2 $--grading given by  $ \, |e_{i{}j}| := p_{i,\,j} = p(i) + p(j) \, $  for all  $ \, i \, , j \in I_n \, $,  where  $ e_{i{}j} $  denotes the  $ (i\,,j) $--th  elementary matrix.  This is of course an algebraic supermonoid (=supersemigroup with unit)   --- with respect to the row-by-column multiplication ---   whose  $ \k $--superalgebra  of ``regular functions'' is  $ \, F\big[\MLnp\big] = \k\big[ \{\, \overline{x}_{i,\,j} \,\big|\, i \, , j \! \in \! I_n \} \big] \, $,  the  $ \k $--superalgebra  of (super)polynomials in the  $ \overline{x}_{i{}j} $'s  with  $ \ZZ_2 $--grading  induced by  $ \, \big|\overline{x}_{i{}j}\big| := p_{i,\,j} = p(i) + p(j) \, $.
 \vskip5pt
   Our next result provides a suitable ``quantisation'' of  $ F\big[\MLnp\big] \, $,  as follows:

\vskip11pt

\begin{theorem}  \label{thm: pres-Fqmln}
 There exists a unique  $ \kqqm $--superbialgebra  $ \Fqintmlnp $  enjoying the following properties:
 \vskip3pt
   (a)\;  it is generated by  $ \, \big\{\, x_{i{}j} \,\big|\, i \, , j \in\! I_n \big\} \, $,
   with parity
 $ \, |x_{i{}j}| = p_{i{}j} := p(i)+p(j) \, $,  \,subject to the following relations, where  $ \, q_s := q^{{(-1)}^{p(s)}} \, $:
  $$  \displaylines{
   \hskip35pt   x_{i{}j}^2  \; = \;  0   \hskip185pt \qquad  \text{\ for \ \ }  p_{i{}j} = \one  \cr
   x_{i{}j} \, x_{i{}k}  \; = \;  {(-1)}^{p_{i{}j} p_{i{}k}} q_i \, x_{i{}k} \, x_{i{}j}  \hskip125pt \qquad  \text{for \ \ } j < k  \cr
   x_{i{}j} \, x_{h{}j}  \; = \;  {(-1)}^{p_{i{}j} p_{h{}j}} q_j \, x_{h{}j} \, x_{i{}j}  \hskip125pt \qquad  \text{for \ \ } i<h  \cr
   \qquad \hskip19pt   x_{i{}j} \, x_{h{}k}  \; = \;  {(-1)}^{p_{i{}j} p_{h{}k}} x_{k{}h} \, x_{i{}j}  \hskip119pt \qquad  \text{for \ \ }  i < h \, , \, j > k   \hskip61pt  \cr
   \hskip3pt   x_{i{}j} \, x_{h{}k}  \; = \;  {(-1)}^{p_{i{}j} p_{h{}k}} x_{h{}k} \, x_{i{}j} \, + {(-1)}^{{p_{i{}j} p_{i{}k}}} \big( q_i^{+1} - q_i^{-1} \big) \, x_{i{}k} \, x_{h{}j}   \hskip11pt  \text{\;\;\ \ for \ \ } i < h \, , \, j < k  }  $$
 \vskip3pt
   (b)\;  its coproduct and counit are defined, in terms of the above presentation, by
\begin{equation}  \label{eq: coprod+counit for x(i,j) - TER}
  \Delta(x_{ij}) = {\textstyle \sum\limits_{a=1}^n} \, {(-1)}^{p_{i{}a} p_{a{}j}} x_{i{}a} \otimes x_{a{}j}   \quad ,  \qquad
      \epsilon(x_{i{}j}) = \delta_{i{}j}
   \qquad \quad  \forall \;\; i \, , j \in I_n   \quad
\end{equation}
 \vskip3pt
   (c)\;  it admits a non-degenerate pairing of\/  $ \kqqm $--superbialgebras
  $$  \langle\,\ ,\ \rangle : \Fqintmlnp \times \Uqintglnp \relbar\joinrel\relbar\joinrel\relbar\joinrel\longrightarrow \kqqm  $$
 given on generators   --- $ \, \forall \, i \, , j \, , k \in I_n \, $,  $ \, t \in I_{n-1} \, $  ---   by
\begin{equation}  \label{eq: pairing-formulas - TER}
  \big\langle x_{i{}j} \, , E_t \big\rangle  \; = \;  \delta_{i,t} \, \delta_{j,t+1} \;\; ,  \quad
\big\langle x_{i{}j} \, , F_t \big\rangle  \; = \,  \delta_{i,t+1} \, \delta_{j,t} \;\; ,  \quad  \big\langle x_{i{}j} \, , \varGamma_k \big\rangle  \; = \;  \delta_{i,k} \, \delta_{j,k}
\end{equation}
 \vskip3pt
   (d)\;  it admits an embedding  $ \, \Fqintmlnp \!\lhook\joinrel\relbar\joinrel\relbar\joinrel\relbar\joinrel\longrightarrow\! \Fhglnp \, $  of Hopf\/  $ \kqqm $--superalgebras   --- where on the right-hand side we consider the\/  $ \kqqm $--module  structure given by restriction from\/  $ \kh $  through  $ \, \kqqm \!\lhook\joinrel\relbar\joinrel\relbar\joinrel\longrightarrow \kh \, $  given by  $ \, P\big(q\,,q^{-1}\big) \mapsto P\big(e^{+\hbar}\,,e^{-\hbar}\big) \, $  ---  which is uniquely given by  $ \, x_{i{}j} \mapsto x_{i{}j} \, $  for all  $ \, i \, , j \in I_n \, $.
 \vskip3pt
   (e)\;  Let  $ \, \Fqmlnp \! := \kq \otimes_{\k[q,q^{-1}]} \Fqintmlnp \, $  be the scalar extension
 of  $ \Fqintmlnp $.  Then the like of claims  \textit{(a)}  through  \textit{(c)} hold true as well, with  $ \, \kq $  replacing  $ \, \kqqm \, $,  $ \Fqmlnp $  replacing  $ \Fqintmlnp \, $,  and  $ \Uqglnp $  replacing  $ \Uqintglnp \, $.
\end{theorem}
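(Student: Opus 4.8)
The plan is to realise $\Fqintmlnp$ \emph{concretely} as a $\kqqm$--subsuperalgebra of the formal object $\Fhglnp$ built in Theorem \ref{thm: pres-Fhg}, in exact parallel with the way $\Uqglnp$ was obtained from $\Uhglnp$ in the proof of Theorem \ref{thm: Hopf-struct x Uqglnp}. Fix the embedding $\kqqm \hookrightarrow \kh$, $q \mapsto e^{\hbar}$ (its image is a faithful copy of the Laurent polynomial ring, since $e^{\hbar}$ is transcendental over $\k$ inside $\kh$), and let $\Fqintmlnp$ be the unital $\kqqm$--subsuperalgebra of $\Fhglnp$ generated by $\big\{\, x_{i{}j} \,\big|\, i,j \in I_n \,\big\}$. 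With this definition, all five claims are to be deduced from the corresponding properties of $\Fhglnp$ by restriction of scalars, the only genuinely new input being the interplay between $\kqqm$ and $\kh$.

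\textbf{Claims (a) and (b).} All the relations listed in (a) have coefficients in $\kqqm$ (they involve only $q = e^{\hbar}$ and the Laurent monomials $q_s = q^{{(-1)}^{p(s)}}$), and they hold in $\Fhglnp$ by Theorem \ref{thm: pres-Fhg}(a), hence in $\Fqintmlnp$. To see they form a \emph{complete} set, I would run the usual straightening (Diamond Lemma) argument for Manin--type quantum matrix relations: the last four families rewrite any out-of-order product of two generators as a $\kqqm$--combination of ordered monomials plus lower terms, the first family kills the squares of the odd generators, and the overlap ambiguities are resolvable (this is the classical computation for quantum matrix superbialgebras). Thus the ordered monomials $\kqqm$--span $\Fqintmlnp$; they are moreover $\kqqm$--linearly independent, because by the PBW Theorem \ref{thm: PBW x Fhglnp} their images in $\Fhglnp$ belong to a topological $\kh$--basis, hence are $\kh$--linearly independent, hence $\kqqm$--linearly independent. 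This simultaneously yields the presentation in (a) and a PBW $\kqqm$--basis of $\Fqintmlnp$. Claim (b) is then immediate: by Theorem \ref{thm: pres-Fhg}(c) one has $\Delta(x_{i{}j}) = \sum_a {(-1)}^{p_{i{}a}p_{a{}j}} x_{i{}a} \otimes x_{a{}j} \in \Fqintmlnp \otimes_{\kqqm} \Fqintmlnp$ (the algebraic tensor product, with no completion needed) and $\epsilon(x_{i{}j}) = \delta_{i{}j} \in \kqqm$, so $\Fqintmlnp$ is a $\kqqm$--subsuperbialgebra of $\Fhglnp$; this also gives the embedding of (d), namely the inclusion $x_{i{}j} \mapsto x_{i{}j}$ into the Hopf superalgebra $\Fhglnp$.

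\textbf{Claim (c): the pairing.} First, $\Uqintglnp \subseteq \Uhglnp$: its generators $E_i$, $F_i$, $L_k^{\pm1} = e^{\pm\hbar\varGamma_k}$ and $\varTheta_i = {\big( K_i^{+1} - K_i^{-1} \big)}\big/{\big( q_i^{+1} - q_i^{-1} \big)}$ all converge in, and hence lie in, $\Uhglnp$ (as in the proof of Theorem \ref{thm: Hopf-struct x Uqglnp}). So the formal pairing of Theorem \ref{thm: pres-Fhg}(d) restricts to a $\kh$--valued pairing on $\Fqintmlnp \times \Uqintglnp$, and a direct computation on generators --- $\big\langle x_{i{}j}, E_t\big\rangle, \big\langle x_{i{}j}, F_t\big\rangle \in \{0,1\}$, $\big\langle x_{i{}j}, L_k^{\pm1}\big\rangle = \delta_{i{}j}\, q^{\pm\delta_{i{}k}}$, and $\big\langle x_{i{}j}, \varTheta_i\big\rangle = \big\langle x_{i{}j}, [E_i,F_i]\big\rangle \in \k$ (pair through the coproduct of $x_{i{}j}$) --- together with the superbialgebra--pairing property (which turns pairings of products into $\pm1$--signed sums of products of pairings) shows it takes values in the ring $\kqqm$. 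The crux is \emph{non-degeneracy}, which I would reduce to that of the formal pairing (Theorem \ref{thm: pres-Fhg}(d)--(e)) via two facts: (i) $\kqqm$ is \emph{dense} in $\kh$ --- the substitution $\hbar \mapsto e^{\hbar} - 1$ is a continuous automorphism of $\kh$ carrying $\k[\hbar]$ onto $\k[e^{\hbar}] \subseteq \kqqm$; and (ii) $\hbar\,\varGamma_k = \log L_k = \sum_{j \geq 1} \tfrac{{(-1)}^{j-1}}{j}(L_k - 1)^j$ lies in the $\hbar$--adic closure $\overline{\Uqintglnp}$ inside $\Uhglnp$ (the partial sums lie in $\Uqintglnp$, as $L_k - 1 \in \Uqintglnp$ and $\tfrac1j \in \k \subseteq \kqqm$, and the series converges). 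By (i) and (ii), $\overline{\Uqintglnp}$ is a closed $\kh$--subalgebra of $\Uhglnp$ containing $E_i$, $F_i$ and $\hbar\varGamma_k$, hence it contains $\hbar^{\,r} M$ for every ordered monomial $M$ in the $E$'s, $\varGamma$'s, $F$'s, with $r$ the number of $\varGamma$--factors in $M$. Now if $\varphi \in \Fqintmlnp$ annihilates $\Uqintglnp$, then by continuity of $\varphi \in {\big( \Uhglnp \big)}^*$ it annihilates $\overline{\Uqintglnp}$, so $\hbar^{\,r}\big\langle\varphi, M\big\rangle = 0$ and ($\kh$ being a domain) $\big\langle\varphi, M\big\rangle = 0$ for all $M$; since these topologically span $\Uhglnp$, $\varphi$ annihilates $\Uhglnp$, whence $\varphi = 0$. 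Conversely, (i) also shows $\Fqintmlnp$ is dense in $\Fhglnp$ (the $\kh$--polynomials in the $x_{i{}j}$'s are dense, and each is approximated by $\kqqm$--polynomials), so a $u \in \Uqintglnp$ annihilating $\Fqintmlnp$ annihilates $\Fhglnp$ by continuity of $\langle\,-\,,u\rangle$, forcing $u = 0$. This proves (c) and completes (d).

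\textbf{Claim (e) and the main obstacle.} For $\Fqmlnp := \kq \otimes_{\kqqm}\Fqintmlnp$, right exactness of $\otimes$ base-changes the presentation of (a), the canonical isomorphism $\kq \otimes_{\kqqm}\!\big( \Fqintmlnp \otimes_{\kqqm}\Fqintmlnp \big) \cong \Fqmlnp \otimes_{\kq}\Fqmlnp$ base-changes the superbialgebra structure of (b), and, since $\kqqm \hookrightarrow \kq$ is a localization (hence flat) and likewise $\Uqglnp$ is the $\kq$--scalar extension of $\Uqintglnp$ (compare Definitions \ref{def: ration-QUESA} and \ref{def: integr-QUESA}), the $\kqqm$--pairing base-changes to a $\kq$--pairing $\Fqmlnp \times \Uqglnp \to \kq$ with non-degeneracy preserved (flatness keeps the two adjoint maps injective, using that for a localization the natural map $\kq \otimes_{\kqqm}\Hom_{\kqqm}(N,\kqqm) \to \Hom_{\kq}(\kq \otimes_{\kqqm} N, \kq)$ is injective); alternatively one re-runs the argument of (c) with the dense inclusion $\kq \subseteq \khp$ replacing $\kqqm \subseteq \kh$. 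The only non-formal step is the non-degeneracy in (c): the difficulty is that $\Uqintglnp$ and $\Uhglnp$ are built from \emph{different} Cartan-type generators ($L_k^{\pm1}, \varTheta_i$ over $\kqqm$ versus $\varGamma_k$ over $\kh$), so the two pairings cannot be compared directly — the remedy being to pass to the $\hbar$--adic closure and exploit $\hbar\varGamma_k = \log L_k$ together with the density of $\kqqm$ in $\kh$.
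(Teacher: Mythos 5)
Your construction is exactly the paper's: identify $q = e^{\hbar}$, realise $\Fqintmlnp$ as the unital $\kqqm$--subsuperalgebra of $\Fhglnp$ generated by the $x_{i{}j}$'s, and let it inherit the presentation, the superbialgebra structure, the pairing and the embedding from Theorem \ref{thm: pres-Fhg}, with claim \textit{(e)} obtained by scalar extension to $\kq$. The paper's own proof is much terser (it asserts that the presentation is induced ``at once'' and that claims \textit{(a)}--\textit{(d)} then follow), so your extra details --- straightening plus the formal PBW theorem for completeness of the relations, and the density of $\kqqm$ in $\kh$ together with $\hbar\,\varGamma_k=\log L_k$ lying in the closure of $\Uqintglnp$ for the non-degeneracy of the restricted pairing --- are correct elaborations of steps the paper leaves implicit rather than a genuinely different route.
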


\begin{proof}
 The proof mimics closely that of  Theorem \ref{thm: Hopf-struct x Uqglnp},  and then deduces the result out of  Theorem \ref{thm: pres-Fhg}.  First of all, we define  $ \Fqintmlnp $  by generators and relations as prescribed in claim  \textit{(a)}.
                                                                                         \par
   Second, like in the proof of  Theorem \ref{thm: Hopf-struct x Uqglnp}  we observe that  $ \, e^\hbar := \exp(\hbar) \, $  generates in  $ \k(\!(\hbar)\!) $  a field extension of  $ \k $ isomorphic to  $ \k(q) \, $,  hence we identify that extension with  $ \kq $  and  $ e^\hbar $  with  $ q \, $.  Similarly, the  $ \k $--subalgebra  of  $ \kh $  generated by  $ e^{+\hbar} $  and  $ e^{-\hbar} $  identifies with the  $ \k $--algebra  $ \kqqm \, $.  Now, inside  $ \Fhglnp $  we consider the unital  $ \kqqm $--subalgebra  $ F_q^{\,\dagger} $  generated by the set  $ \, \big\{\, x_{i{}j} \;\big|\; i \, , j \in I_n \,\big\} \, $  of the built-in generators of  $ \Fhglnp \, $.  Then the presentation of  $ \Fhglnp $  by generators and relations induces at once a presentation for  $ F_q^{\,\dagger} $,  with the given generators and with relations as in claim  \textit{(a)}.  It follows that  $ F_q^{\,\dagger} $  is isomorphic to  $ \Fqintmlnp $  as a  $ \kqqm $--superalgebra.  Finally, it is immediate to see that  $ F_q^{\,\dagger} $  is actually a sub-superbialgebra   --- over  $ \kqqm $  ---   of  $ \Fhglnp \, $:  more precisely, as such it is a  \textsl{standard\/}  (i.e., non-topological) superbialgebra, as the coproduct map takes values in the standard, algebraic tensor product  $ \, F_q^{\,\dagger} \otimes_{\k[q,\,q^{-1}]} F_q^{\,\dagger} \, $.  Moreover, its coproduct and counit are described on generators.  All this proves that an object  $ \Fqintmlnp $  as claimed which fulfils claims  \textit{(a)},  \textit{(b)},  \textit{(c)\/}  and  \textit{(d)\/}  does exist.
 \vskip3pt
   Eventually, claim  \textit{(d)\/}  follows trivially from the previous ones.
\end{proof}

\vskip7pt

\begin{definition}  \label{def: polyn-QFSA's x MLnp}
 We call  $ \Fqintmlnp \, $,  resp.\  $ \Fqmlnp \, $,  \textit{the  \textsl{integral},  resp.\  \textit{rational},  quantum function superalgebra (or ``QFSA'' in short) over  $ \textit{ML}_{\,n}^{\,p} \, $}.   \hfill  $ \diamond $
\end{definition}

\vskip9pt

   We have also a PBW-like theorem (an ``integral version'' of  Theorem \ref{thm: PBW x Fhglnp})  that provides a  $ \kqqm$--basis  of ordered, truncated monomials in the generators: it is the ``super counterpart'' of a well-known similar result for the quantum function algebra (\`a la jimbo-Lusztig) over  $ ML(n) \, $,  see for instance  \cite{Tn},  Lemma A.2.

\vskip12pt

\begin{theorem}  \label{thm: PBW x Fqmlnp}
 \textsl{(PBW Theorem for  $ \Fqintmlnp $  and  $ \Fqmlnp \, $)}
 Let us fix any total order in the set  $ \, \big\{\, x_{i{}j} \,\big|\, i , j \in I_n \,\big\} \, $  of generators of  $ \Fqintmlnp \, $.  Then the set
  $$  \mathbb{B}  \; := \;  \bigg\{\, {\textstyle \mathop{\overrightarrow{\prod}}\limits_{i, j \in I_n}} \hskip-3pt x_{i{}j}^{\,e_{i{}j}} \;\bigg|\; e_{i{}j} \in \NN \, , \, \forall \; i \, , j \in \! I_n \, , \; e_{i{}j} \leq 1 \text{\;\ if\;\ }  p_{i,j} = \one \,\bigg\}  $$
 of all truncated ordered monomials in the  $ x_{i{}j} $'s  is a\/  $ \kqqm $--basis  of the\/  $ \kqqm $--module  $ \Fqmlnp \, $.  In particular, then, the  $ \kqqm $--module  $ \Fqintmlnp $  is free.
                                                                    \par
   A parallel result holds for  $ \, \Fqmlnp := \kq \otimes_{\k[q\,,\,q^{-1}]} \Fqintmlnp \, $  as well, namely\/  $ \mathbb{B} $  is a  $ \kq $--basis  of  $ \Fqmlnp \, $.
\end{theorem}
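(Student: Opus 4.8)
The plan is to reduce the whole statement to the PBW theorem already available for the formal object $\Fhglnp$ (Theorem \ref{thm: PBW x Fhglnp}) by means of the embedding of Theorem \ref{thm: pres-Fqmln}\textit{(d)}, splitting the argument into a ``spanning'' half and a ``linear independence'' half and obtaining the rational statement by base change at the very end. Throughout I would fix \emph{once and for all} the row-major lexicographic order on the generators, $x_{ij} \prec x_{hk}$ iff $i<h$, or $i=h$ and $j<k$, and use this same order in all the PBW statements involved (all of which allow an arbitrary total order).

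\emph{Spanning.} I would read the defining relations of Theorem \ref{thm: pres-Fqmln}\textit{(a)} as rewriting rules à la Bergman: $x_{ij}^2\mapsto 0$ when $p_{ij}=\one$, and each ``commutation'' relation turned into a rule whose left-hand side is the degree-two word in which the larger generator stands to the left. For every pair of distinct generators there is precisely one such rule, and the right-hand side of each rule is a $\kqqm$-linear combination of words strictly smaller than its left-hand side in the degree-lexicographic order induced by $\prec$; the only point requiring a glance is the last relation (for $i<h$, $j<k$), whose right-hand side also contains the inhomogeneous term $x_{ik}x_{hj}$, but since $i<h$ this word too begins with a letter $\prec x_{hk}$, hence is $\prec x_{hk}x_{ij}$ as needed. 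Since $\Fqintmlnp$ is spanned over $\kqqm$ by all words in the $x_{ij}$'s, and any word that is not a truncated ordered monomial contains an adjacent out-of-order pair (or an $x_{ij}^2$ with $p_{ij}=\one$), i.e.\ the left-hand side of some rule, iterated rewriting strictly decreases the word in the degree-lexicographic order without ever raising its degree; as there are finitely many words of each degree the process terminates, necessarily at a $\kqqm$-linear combination of truncated ordered monomials. Hence $\mathbb{B}$ spans $\Fqintmlnp$ over $\kqqm$; note I would not need confluence of the system.

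\emph{Linear independence.} Here I would invoke the embedding of Hopf $\kqqm$-superalgebras $\iota:\Fqintmlnp\hookrightarrow\Fhglnp$, $x_{ij}\mapsto x_{ij}$, of Theorem \ref{thm: pres-Fqmln}\textit{(d)}. It carries $\mathbb{B}$ bijectively onto the set of truncated ordered monomials in the topological generators of $\Fhglnp$, which by Theorem \ref{thm: PBW x Fhglnp} is a topological $\kh$-basis of $\Fhglnp$; in particular $\iota(\mathbb{B})$ is $\kh$-linearly independent, hence \emph{a fortiori} $\kqqm$-linearly independent once one recalls that $\kqqm\hookrightarrow\kh$, $q\mapsto e^{\hbar}$, is injective (as already used in the proof of Theorem \ref{thm: Hopf-struct x Uqglnp}): any finite $\kqqm$-linear dependence among the $\iota(b)$'s is a finite $\kh$-linear one, forcing all coefficients to vanish. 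Pulling back along $\iota$ shows $\mathbb{B}$ itself is $\kqqm$-linearly independent in $\Fqintmlnp$.

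Combining the two parts, $\mathbb{B}$ is a $\kqqm$-basis of $\Fqintmlnp$, so in particular the $\kqqm$-module $\Fqintmlnp$ is free; and since $\Fqmlnp=\kq\otimes_{\kqqm}\Fqintmlnp$, applying $\kq\otimes_{\kqqm}(-)$ carries this basis to a $\kq$-basis of $\Fqmlnp$, which is exactly the corresponding set of truncated ordered monomials in the generators of $\Fqmlnp$. I expect the spanning step to be the only real work: it is routine in spirit but does require laying out the rewriting order carefully and checking termination in the presence of the lower-order term $x_{ik}x_{hj}$; once Theorem \ref{thm: PBW x Fhglnp} and the embedding \textit{(d)} are in hand, everything else is formal.
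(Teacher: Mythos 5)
Your argument is sound and, for the linear--independence half, genuinely different from the paper's. The spanning half is in substance the paper's own proof: the paper likewise rewrites an arbitrary word by means of the relations of Theorem \ref{thm: pres-Fqmln}\textit{(a)} and controls termination, only its measure is a weight vector (total degree, the occurrence counts $d_{i,j}$ listed row by row, and the inversion number) rather than your degree--lexicographic order on words. Where you diverge is independence: the paper uses the embedding of Theorem \ref{thm: pres-Fqmln}\textit{(d)} only to see that $\Fqintmlnp$ is torsion-free over $\kqqm$, then clears powers of $(q-1)$ from a putative dependence relation and specialises at $q=1$, where the reduced monomials form the known $\k$--basis of $F\big[\MLnp\big]$; you instead transport the monomials along that same embedding into $\Fhglnp$ and invoke Theorem \ref{thm: PBW x Fhglnp} together with the injectivity of $\kqqm \hookrightarrow \kh$, $q \mapsto e^{\hbar}$. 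This is legitimate --- a topological basis is in particular finitely linearly independent, and your reliance on claim \textit{(d)} is no heavier than the paper's own --- and it buys you independence without any appeal to the classical basis at $q=1$, at the price of leaning more substantially on the formal PBW theorem.

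The one point where your proposal falls short of the statement is the quantifier over orders: the theorem asserts the result for an \emph{arbitrary} total order on the $x_{ij}$'s, while your spanning step is pinned to the row-major order --- your termination check for the last relation uses exactly that $x_{ik} \prec x_{hk}$ because $i<h$, which fails for a general order. The independence half is order-insensitive (Theorem \ref{thm: PBW x Fhglnp} holds for any order), so what is missing is spanning for a general order; this is precisely what the paper's weight is built for: the $d_{i,j}$--components are listed in row-major fashion so that the correction term $x_{ik}\,x_{hj}$ in the fifth relation strictly lowers the weight no matter which ambient total order governs the inversion count. Either import that weight (or an equivalent refinement of your measure) to handle the general case, or weaken the claim to the particular order you fixed.
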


\begin{proof}
 It is enough to prove the claim about  $ \Fqintmlnp \, $,  \,so we focus on that.
                                                                         \par
   To begin with, note that  $ \Fqintmlnp $  is clearly spanned over  $ \kqqm $  by the set of all (possibly unordered) monomials in the  $ x_{i{}j} $'s.  Moreover, giving degree 1 to each generator  $ x_{i{}j} $  defines an  $ \NN $--grading  on  $ \Fqintmlnp $  (in a nutshell, because the relations among the generators are ``homogeneous''), with  $ \, \Fqintmlnp = \! \mathop{\oplus}\limits_{m \in \NN} \! \F_m \, $  where each  $ \F_m $  is the  $ \kqqm $--span  of all monomials (in the  $ x_{i{}j} $'s)  of degree  $ m \, $.  Thus, it is enough for us to prove that each direct summand  $ \, \F_m \, (\, m \in \NN \,) \, $  admits as  $ \kqqm $--basis  the set  $ \mathbb{B}_m $  of all  \textsl{truncated, ordered monomial of degree  $ m \, $}.
 \vskip5pt
   First of all, we prove that  $ \mathbb{B}_m $  does generate all of  $ \F_m $  as a  $ \kqqm $--module.
                                                                       \par
   Take any (possibly unordered) monomial in the  $ x_{i{}j} $'s  of degree   $ m \, (\in \NN) \, $,  say  $ \; \underline{x} \, := \, x_{i_1,j_i} \, x_{i_2,j_2} \cdots \, x_{i_m,j_m} \; $:  \,we  define its  \textit{weight\/}  as
  $$  w(\,\underline{x}\,) \; := \; \big(\, m \, , d_{1,1} \, , d_{1,2} \, ,
\dots, d_{1,n} \, , \dots, d_{2,n} \, , d_{3,1}
\, , \dots, d_{n-1,n} \, , d_{n,1} \, , \dots, d_{n,n} \, , i(\,\underline{x}\,) \big)  $$
 where
  $$  \displaylines{
   d_{i,j}  \; := \;  \big| \big\{\, s \! \in \! \{1,\dots,k\} \,\big|\, (i_s,j_s) = (i,j) \big\} \big| \, = \textsl{number of occurrences of  $ x_{i{}j} $  in  $ \underline{x} \; $}  \cr
   \text{and}  \hfill   i(\,\underline{x}\,)  \; := \;  \textsl{number of  \textit{inversions}  of the order occurring in  $ \underline{x} \; $}   \hfill  }  $$
 Then  $ \, w(\,\underline{x}\,) \in \NN^{\,n^2 + 2} \, $,  \,and we consider  $ \NN^{\,n^2 + 2} $  as a totally ordered set with respect to the (total) lexicographic order  $ \leq_{lex} \, $.  Now consider again the defining relations of  $ \Fqintmlnp \, $,  namely
  $$  \displaylines{
   \hskip35pt   x_{i{}j}^2  \; = \;  0   \hskip185pt \qquad  \text{\ for \ \ }  p_{i{}j} = \one  \cr
   x_{i{}j} \, x_{i{}k}  \; = \;  {(-1)}^{p_{i{}j} p_{i{}k}} q_i \, x_{i{}k} \, x_{i{}j}  \hskip125pt \qquad  \text{for \ \ } j < k  \cr
   x_{i{}j} \, x_{h{}j}  \; = \;  {(-1)}^{p_{i{}j} p_{h{}j}} q_j \, x_{h{}j} \, x_{i{}j}  \hskip125pt \qquad  \text{for \ \ } i<h  \cr
   \qquad \hskip19pt   x_{i{}j} \, x_{h{}k}  \; = \;  {(-1)}^{p_{i{}j} p_{h{}k}} x_{k{}h} \, x_{i{}j}  \hskip119pt \qquad  \text{for \ \ }  i < h \, , \, j > k   \hskip61pt  \cr
   \hskip3pt   x_{i{}j} \, x_{h{}k}  \; = \;  {(-1)}^{p_{i{}j} p_{h{}k}} x_{h{}k} \, x_{i{}j} \, + {(-1)}^{{p_{i{}j} p_{i{}k}}} \big(\, q_i^{+1} - q_i^{-1} \,\big) \, x_{i{}k} \, x_{h{}j}   \hskip7pt  \text{\;\;\ \ for \ \ } i < h \, , \, j < k  }  $$
 as in  Theorem \ref{thm: pres-Fqmln}\textit{(a)}.  We will see now how we can use them to re-write any monomial  $ \underline{x} $  as above into a  $ \kqqm $--linear  combination of similar monomials having lower weight in the totally ordered set  $ \, \big(\, \NN^{\,n^2 + 2} , \, \leq_{lex} \!\big) \, $.
                                                              \par
   The very first relation just tells us that in any monomial  $ \underline{x} $  as above we can always, roughly speaking, ``reduce modulo 2 every exponent  $ e_{i{}j} $  with  $ \, p_{i,j} = \one \, $''.  The outcome of any such ``reduction step'' is a new monomial  $ \underline{x}' $  whose total degree  $ m' $  is strictly smaller than the total degree  $ m $  of  $ \underline{x} \, $:  \,in particular, it is also  $ \, w\big(\,\underline{x}'\big) \lneqq w(\,\underline{x}\,) \; $.
                                                              \par
   We use the other relations, instead, to ``decrease the number of inversions'' in  $ \, \underline{x} \, $.  Indeed, let  $ x_{i{}j} $  and  $ x_{h{}k} $  be two consecutive factors in  $ \, \underline{x} \, $  such that  $ \, x_{i{}j} \succneqq x_{h{}k} \, $,  where  $ \preceq $  is our fixed total order in the set of all generators  $ \, \big\{\, x_{r{}s} \,\big|\, r \, , s \in I_n \big\} \, $.
                                                              \par
   First assume that  $ \, h = i \, $  and  $ \, j < k \, $:  then using the second relation in the above list we replace the factor  ``$ \, x_{i{}j} \, x_{i{}k} \, $''  in  $ \, \underline{x} \, $  with the new factor  ``$ \, {(-1)}^{p_{i{}j} p_{i{}k}} q_i \, x_{i{}k} \, x_{i{}j} \, $''.  Then, pulling out the contribution  ``$ \, {(-1)}^{p_{i{}j} p_{i{}k}} q_i \, $''  we and up with  $ \; \underline{x} = {(-1)}^{p_{i{}j} p_{i{}k}} q_i \, \underline{x}' \; $  where  $ \, \underline{x}' \, $  is a new monomial with  $ \, w\big(\,\underline{x}'\big) \lneqq_{lex} w(\,\underline{x}\,) \, $,  \,because we have  $ \, i\big(\,\underline{x}'\big) \lneqq i(\,\underline{x}\,) \, $  in the last component of the two weights, whereas all other components are the same.  If instead we have  $ \, h = i \, $  but  $ \, j > k \, $,  then the same argument   --- but for reading the same relation the other way round ---   gives us  $ \; \underline{x} = {(-1)}^{p_{i{}j} p_{i{}k}} q_i^{-1} \, \underline{x}' \; $  where  $ \, \underline{x}' \, $  is a new monomial such that  $ \, i\big(\,\underline{x}'\big) \lneqq i(\,\underline{x}\,) \, $  and  $ \, w\big(\,\underline{x}'\big) \lneqq_{lex} w(\,\underline{x}\,) \, $,  just as before.
                                                              \par
   As a second step, assume that  $ \, k = j \, $:  then again, a perfectly similar argument, now using the third relation in the list, yields  $ \; \underline{x} = \pm \, q_j^{\pm 1} \underline{x}' \; $  for a suitable choice of signs with  $ \, i\big(\,\underline{x}'\big) \lneqq i(\,\underline{x}\,) \, $  and  $ \, w\big(\,\underline{x}'\big) \lneqq_{lex} w(\,\underline{x}\,) \, $  again.  The third step is for  $ \, i < h \, $  and  $ \, j > k \, $:  in this case one uses the fourth relation and finds  $ \; \underline{x} = \pm \, \underline{x}' \; $  with  $ \, i\big(\,\underline{x}'\big) \lneqq i(\,\underline{x}\,) \, $  and  $ \, w\big(\,\underline{x}'\big) \lneqq_{lex} w(\,\underline{x}\,) \, $.  Moreover, reading the same formula the other way round, one gets a parallel result in the case when  $ \, h < i \, $  and  $ \, k > j \, $  as well.
                                                              \par
   Finally, we are left with the case when  $ \, i < h \, $  and  $ \, j < k \, $  or  $ \, h < i \, $  and  $ \, k < j \, $;  we start with the first case.  Using the fifth (and last) relation above, we replace the factor  ``$ \, x_{i{}j} \, x_{h{}k} \, $''  in  $ \, \underline{x} \, $  with  ``$ \, {(-1)}^{{p_{i{}j} p_{i{}k}}} x_{h{}k} \, x_{i{}j} \, + {(-1)}^{{p_{i{}j} p_{i{}k}}} \big(\, q_i^{+1} - q_i^{-1} \,\big) \, x_{i{}k} \, x_{h{}j} \, $'':  \,then
  $$  \underline{x}  \; = \;  {(-1)}^{{p_{i{}j} p_{i{}k}}} \underline{x}' \, + \, {(-1)}^{{p_{i{}j} p_{i{}k}}} \big(\, q_i^{+1} - q_i^{-1} \,\big) \, \underline{x}''  $$
 where  $ \underline{x}' $  and  $ \underline{x}'' $  are monomials such that  $ \, i\big(\,\underline{x}'\big) \lneqq i(\,\underline{x}\,) \, $  and  $ \, w\big(\,\underline{x}'\big) \lneqq_{lex} w(\,\underline{x}\,) \, $  and also  $ \, w\big(\,\underline{x}''\big) \lneqq_{lex} w(\,\underline{x}\,) \, $,  \,just by construction.
 \vskip3pt
   Now, in all steps considered above, the initial monomial  $ \underline{x} $  is always re-written as a  $ \kqqm $--linear  combination of (one or two) monomials whose weight is strictly smaller than that of  $ \underline{x} \, $.  Therefore, by induction we can assume that those monomials do belong to the  $ \kqqm $--span  of  $ \mathbb{B}_m \, $,  and then the same holds for  $ \underline{x} $  as well, q.e.d.
 \vskip5pt
   We are still left to show that the set  $ \mathbb{B}_m $  is linearly independent over  $ \kqqm \, $.
%
%
                                                              \par
   First,  $ \Fqintmlnp \, $,  as a  $ \kqqm $--module,  is torsion free, due to the embedding  $ \, \Fqintmlnp \!\lhook\joinrel\relbar\joinrel\relbar\joinrel\longrightarrow\! \Fhglnp \, $,  \,see  Theorem \ref{thm: PBW x Fqmlnp};  then each  $ \F_m $  is torsion free too.
%
%
                                                              \par
   Now assume that  $ \; \sum_{b \in \mathbb{B}_m} c_b \, b \, = \, 0 \; $  is some  $ \kqqm $--linear  dependence relation among elements of  $ \mathbb{B}_m \, $,  with  $ \, c_b \not= 0 \, $  for some  $ \, b \in \mathbb{B}_m \, $.
 Since  $ \F_m $  is torsion free, we can assume that  $ \; \overline{c_b} := c_b \mod (q-1) \kqqm \, $  is such that  $ \, \overline{c_b} \not= 0 \, $  for some  $ \, b \in  \mathbb{B}_m \, $.  Then the relation  $ \; \sum_{b \in \mathbb{B}_m} c_b \, b \, = \, 0 \; $  in  $ \F_m \, \big( \subseteq \Fqintmlnp \big) \, $  yields  $ \; \sum_{\overline{c_b} \not= 0} \, \overline{c_b} \, \overline{b} \, = \, 0 \; $  in  $ F\big[M\!L_{\,n}^{\,p}\big] \, $,  where  $ \, \overline{b} \, $  denotes the coset of  $ b $  modulo  $ \, (q-1) \Fqintmlnp \, $:  \,but this is impossible, because the  $ \overline{b} $'s  are just the truncated monomials in the  $  \overline{x}_{i{}j} $'s,  which form a  $ \k $--basis  of  $ F\big[M\!L_{\,n}^{\,p}\big] \, $.  Thus  $ \mathbb{B}_m $  is  $ \kqqm $--linearly  independent, q.e.d.
\end{proof}

\vskip5pt

\begin{rmks}  \label{rmks: Fqmlnp-not-Hopf}
   \textit{(a)}\,  It is worth stressing that  $ \Fqmlnp $  is a superbialgebra,  but  \textsl{not a  \textit{Hopf}  superalgebra}.  To get such a Hopf superalgebra   ---
%
%
 denoted by  $ F_q\big[GL_{\,n}^{\,p}\big] $  ---   we should enlarge  $ \Fqmlnp $  by adding the inverse of a carefully chosen ``quantum Berezinian'', mimicking the non-super case where one introduces the inverse of a suitable ``quantum determinant'' (cf.\ for instance  \cite{Tn}).  See also  \cite{Ma},  where a ``quantum Berezinian'' is introduced (and later inverted), though through a different perspective.  However, filling in all details needed to correctly introduce such a ``quantum Berezinian'' is far from trivial, hence way beyond the scopes of this work.
 \vskip3pt
   \textit{(b)}\,  An alternative way to construct  $ \Fqmlnp $  (mimicking the non-super case: see  \cite{Tn})  goes as follows.  Instead of taking the full (linear) dual of  $ \Uhglnp $   --- thus finding the \textsl{formal\/}  Hopf superalgebra  $ \, \Fhglnp := {\Uhglnp}^* \, $  ---   one might start from the polynomial QUESA  $ \Uqglnp $  and take for it its  \textsl{finite dual\/}  (or  \textsl{``restricted dual''}, or  \textsl{``Hopf dual''})  $ \, {\Uqglnp}^\circ \, $,  \,just like in the non-super setup of affine algebraic groups.  However, in this case the standard finite dual is a bit too large: one instead has to consider such a ``finite dual'' with respect to a specifically chosen tensor subcategory of the category of all finite-dimensional  $ \Uqglnp $--modules,  namely that of all  $ \Uqglnp $--modules  ``of Type I'' (using Lusztig's terminology).  The ``finite dual with respect to this category'' then is, by definition, the  $ \kq $--subalgebra  of  $ {\Uhglnp}^* $  generated by all matrix coefficients relative to objects in that category.  Now, if we still restrict to the tensor (sub)category   --- within that of Type I  $ \Uqglnp $--modules  ---   generated by the standard (or ``natural'') vector representation  $ V_q $  of  $ \Uqglnp \, $,  one finds the  $ \kq $--subalgebra  of  $ {\Uhglnp}^* $  generated by the matrix coefficients from objects in this subcategory is actually generated by the matrix coefficients of  $ V_q $  alone.  Now, the latter matrix coefficients are just the  $ x_{i{}j} $'s,  hence one gets  $ \Fqmlnp \, $.
 \vskip3pt
   \textit{(c)}\,  On a different note, our choice of an ``integral form''  $ \Fqintmlnp $  of  $ \Fqmlnp $  is somewhat na\"\i{}ve.  Indeed, the best way to introduce such a notion would be to start by considering in  $ \Uqglnp $  the Lusztig-like ``restricted''  $ \kqqm $--integral  form  $ \widehat{U}_q\big(\liegl_{\,n}^{\,p}\big) \, $,  that is, by definition, the unital  $ \kqqm $--subalgebra  of  $ \Uqglnp $ generated by all  $ q $--divided  powers of the $ E_i $'s  and the  $ F_i $'s  and all  $ q $--binomial  coefficients in the  $ L_i^{\pm 1} $'s  (taking the latter as in  \cite{DL}, \S 3,  rather than as in Lusztig's works).  After proving that  $ \widehat{U}_q\big(\liegl_{\,n}^{\,p}\big) $  is in fact a Hopf subsuperalgebra   ---  over  $ \kqqm $  ---   of  $ \Uqglnp \, $,  one should take as  ``$ \kqqm $--integral  form'' of  $ \Fqmlnp $  the ``full  $ \kqqm $--dual  of  $ \widehat{U}_q\big(\liegl_{\,n}^{\,p}\big) $'',  i.e.\ the subset  $ \widehat{F}_q\big[M\!L_{\,n}^{\,p}\big] $  of  $ \Fqmlnp $  of all elements that are  $ \kqqm $--valued  on  $ \widehat{U}_q\big(\liegl_{\,n}^{\,p}\big) \, $.  Now  $ \, \widehat{U}_q\big(\liegl_{\,n}^{\,p}\big) \supseteq \Uqintglnp \, $  yields  $ \, \widehat{F}_q\big[M\!L_{\,n}^{\,p}\big] \subseteq \Fqintmlnp \, $;  \,in fact, we expect identity to hold, i.e.\  $ \, \widehat{F}_q\big[M\!L_{\,n}^{\,p}\big] = \Fqintmlnp \, $.
                                                 \par
   Now, although the plan sketched above is quite clear, fixing all technicalities and filling in all details so to get a mathematically sound outcome is still highly demanding, actually deserving an independent paper on its own.  Here we restricted ourselves to a less ambitious goal, leaving such a task to the interested reader.
\end{rmks}

\vskip13pt

\subsection{Multiparametric QFSA's for the general linear supermonoid}  \label{subsec: Mp-polyn-QFSA's x super-MLnp}
 Just like we did in the uniparametric case, we can introduce suitable (polynomial) QFSA's for  $ {ML}_{\,n}^{\,p} $  in the multiparametric case as well.  To this end, we fix a first indeterminate  $ q $  and a whole antysimmetric matrix  $ \, \bq := {\big(\,q_{\,i{}j}\big)}_{i \in I_n}^{j \in I_n} \, $  of further indeterminates (so that  $ \, q_{\,i{}j} + q_{\,j{}i} = 0 \, $  for all  $ \, i < j \, $):  \,we let  $ \, \kbqqm := \k\Big[ {\big\{ q^{\pm 1} , q_{\,i{}j}^{\pm 1} \big\}}_{i \in I_n}^{j \in I_n} \Big] \, $  be the associated  $ \k $--algebra  of Laurent polynomials, and  $ \, \kbq := \k\Big( {\big\{ q \, , q_{\,i{}j}^{\pm 1} \big\}}_{i \in I_n}^{j \in I_n} \Big) \, $  be the associated field of fractions.
                                                        \par
   The following result is the announced multiparametric counterpart of  Theorem \ref{thm: pres-Fqmln}:  its proof is entirely similar, hence it is left to the reader.

\vskip11pt

\begin{theorem}  \label{thm: pres-Fbqmln}  {\ }
 \vskip3pt
   (a)\;  There exists a unique  $ \kbqqm $--superbialgebra  $ \Fbqintmlnp $  such that:
 \vskip5pt
   \quad   (a.1)\;  it is generated by  $ \, \big\{\, x_{i{}j} \,\big|\, i \, , j \in\! I_n \big\} \, $,
   with parity
 $ \, |x_{i{}j}| = p_{i{}j} := p(i)+p(j) \, $,  \,subject to the following relations, where  $ \, q_s := q^{{(-1)}^{p(s)}} \, $:
  $$  \displaylines{
   \hskip51pt   x_{i{}j}^2  \; = \;  0   \hskip173pt  \big(\; p_{i{}j} = \one \,\big)  \cr
   x_{i{}j} \, x_{i{}k}  \; = \;  {(-1)}^{p_{i{}j} p_{i{}k}} q_{\,i} \, q_{j,k}^{\;-1} \, x_{i{}k} \, x_{i{}j}  \hskip105pt  \big(\; j < k \,\big)  \cr
   x_{i{}j} \, x_{h{}j}  \; = \;  {(-1)}^{p_{i{}j} p_{h{}j}} q_j \, q_{\,i,h}^{\;+1} \, x_{h{}j} \, x_{i{}j}  \hskip105pt  \big(\, i<h \,\big)  \cr
   \hskip17pt   x_{i{}j} \, x_{h{}k}  \; = \;  {(-1)}^{p_{i{}j} p_{h{}k}\,} q_{\,i,h}^{\;+1} \, q_{j,k}^{\;-1} \, x_{k{}h} \, x_{i{}j}  \hskip81pt  \big(\, i<h \, , \; j>k \,\big)  \cr
   \hskip7pt   x_{i{}j} \, x_{h{}k}  \; = \;  {(-1)}^{p_{i{}j} p_{h{}k}\,} q_{\,i,h}^{\;+1} \, q_{j,k}^{\;-1} \, x_{h{}k} \, x_{i{}j} \, + {(-1)}^{{p_{i{}j} p_{i{}k}}} \big( q_{\,i}^{+1} - q_{\,i}^{-1} \big) \, q_{j,k}^{\;-1} \, x_{i{}k} \, x_{h{}j}  \hskip17pt  \bigg(\; {{i < h} \atop {j < k}}  \,\bigg)  }  $$
 \vskip3pt
   \quad   (a.2)\;  its coproduct and counit maps are defined, in terms of generators, by
\begin{equation}  \label{eq: coprod+counit for x(i,j) - QUATER}
  \Delta(x_{ij}) = {\textstyle \sum\limits_{a=1}^n} \, {(-1)}^{p_{i{}a} p_{a{}j}} x_{i{}a} \otimes x_{a{}j}   \quad ,  \qquad
      \epsilon(x_{i{}j}) = \delta_{i{}j}
   \qquad \quad  \forall \;\; i \, , j \in I_n   \quad
\end{equation}
 \vskip1pt
   (b)\;  Let  $ \, \Fbqmlnp := \kbq \otimes_{\k[q^{\pm 1},\,\bq^{\pm 1}]} \Fbqintmlnp \, $  be the scalar extension of  $ \Fbqintmlnp \, $.  Then the like of claims  \textit{(a)}  through  \textit{(c)} hold true as well, with  $ \, \kbq $  replacing  $ \, \kbqqm \, $  and  $ \Fbqmlnp $  replacing  $ \Fbqintmlnp \, $.
\end{theorem}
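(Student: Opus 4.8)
The plan is to run the argument of the proof of Theorem~\ref{thm: pres-Fqmln} almost verbatim, the only substantive change being that the ambient object is now the \emph{multiparametric} formal QFSHSA $\FhPhiglnp$ of Theorem~\ref{thm: pres-FhPhiglnp} --- for a ``generic'' choice of the antisymmetric matrix $\Phi$ --- rather than $\Fhglnp$. Concretely, I would first \emph{define} $\Fbqintmlnp$ to be the unital $\kbqqm$--superalgebra presented by the generators $\{\,x_{i{}j}\mid i,j\in I_n\,\}$, with parities $|x_{i{}j}|=p_{i{}j}$, and the relations listed in \textit{(a.1)}; then the task reduces to showing that the formulas of \textit{(a.2)} endow this algebra with a genuine (non-topological) $\kbqqm$--superbialgebra structure, that such a structure is unique, and finally that the whole object is realised inside $\FhPhiglnp$.

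For the base change: let $\k':=\k\big(\{\,t_{i{}j}\,\}_{1\le i<j\le n}\big)$ be the rational function field in $\binom{n}{2}$ independent indeterminates over $\k$, put $t_{j{}i}:=-t_{i{}j}$ and $t_{i{}i}:=0$, so that $\Phi:=\big(t_{i{}j}\big)\in\lieso_n(\k')\subseteq\lieso_n\big(\k'[[\hbar]]\big)$. Forming $\FhPhiglnp$ over $\k'$ as in Theorem~\ref{thm: pres-FhPhiglnp}, the elements $q:=e^{\hbar}$ and $q_{i{}j}:=e^{\hbar\,t_{i{}j}}$ (with $q_{j{}i}=q_{i{}j}^{-1}$ and $q_{i{}i}=1$) lie in $\k'[[\hbar]]$. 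Since $1$ together with the $t_{i{}j}$ ($i<j$) are $\QQ$--linearly independent in $\k'$, the family $q,\{q_{i{}j}\}_{i<j}$ is algebraically independent over $\k$ inside $\k'(\!(\hbar)\!)$: indeed pairwise distinct exponentials $e^{\mu\hbar}$ are $\k'$--linearly independent (differentiate repeatedly in $\hbar$, evaluate at $\hbar=0$, Vandermonde), while $\ZZ$--linear independence of $\{1\}\cup\{t_{i{}j}\}_{i<j}$ makes all monomials in $q,\{q_{i{}j}\}$ pairwise distinct. Hence the $\k$--subalgebra of $\k'[[\hbar]]$ generated by $q^{\pm1}$ and the $q_{i{}j}^{\pm1}$ is canonically isomorphic to $\kbqqm$ via $q\mapsto q$, $q_{i{}j}\mapsto q_{i{}j}$, and I identify the two.

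Now, inside $\FhPhiglnp$ (over $\k'$) consider the unital $\kbqqm$--subalgebra $F_\bq^{\,\dagger}$ generated by the built-in generators $\{\,x_{i{}j}\mid i,j\in I_n\,\}$. By Theorem~\ref{thm: pres-FhPhiglnp}\textit{(a)} these satisfy exactly the relations of \textit{(a.1)} --- note that $q_s=e^{\hbar{(-1)}^{p(s)}}=q^{{(-1)}^{p(s)}}$, $q_{j,k}=e^{\hbar t_{j,k}}$, and the ``structure constant'' $q_i^{+1}-q_i^{-1}$ lies in $\kbqqm$ --- so there is a canonical surjection $\Fbqintmlnp\twoheadrightarrow F_\bq^{\,\dagger}$. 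It is an isomorphism: on one hand the reduction/diamond--lemma argument of the proof of Theorem~\ref{thm: PBW x Fqmlnp} applies verbatim (the multiparameters occur only as units of $\kbqqm$), showing that $\Fbqintmlnp$ is spanned over $\kbqqm$ by the ordered truncated monomials in the $x_{i{}j}$'s; on the other hand the PBW Theorem~\ref{thm: PBW x FhPhiglnp} says those same monomials form a topological $\kh$--basis of $\FhPhiglnp$, hence are $\kbqqm$--linearly independent in $F_\bq^{\,\dagger}$, so a spanning set mapping onto a linearly independent set is a basis and the surjection is bijective (and $F_\bq^{\,\dagger}$ is $\kbqqm$--free). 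Moreover $F_\bq^{\,\dagger}$ is stable under the coproduct and counit of $\FhPhiglnp$: by the explicit formula Theorem~\ref{thm: pres-FhPhiglnp}\textit{(c)}, $\Delta(x_{i{}j})$ already lies in the \emph{algebraic} tensor square $F_\bq^{\,\dagger}\otimes_{\kbqqm}F_\bq^{\,\dagger}$ (no $\hbar$--completion intervenes) and $\epsilon(x_{i{}j})=\delta_{i{}j}\in\kbqqm$; so $F_\bq^{\,\dagger}$ is a bona fide, non-topological $\kbqqm$--superbialgebra with coproduct and counit precisely as in \textit{(a.2)}. Transporting this along $\Fbqintmlnp\cong F_\bq^{\,\dagger}$ establishes \textit{(a)}; uniqueness follows since any $\kbqqm$--superbialgebra generated by elements with the relations of \textit{(a.1)} and the coproduct of \textit{(a.2)} is a quotient of $\Fbqintmlnp$, which already carries that very coalgebra structure. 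Finally \textit{(b)} is obtained by applying base change along the flat, monoidal inclusion $\kbqqm\hookrightarrow\kbq$ to the $\kbqqm$--free module $\Fbqintmlnp$: one gets a $\kbq$--superbialgebra $\Fbqmlnp$ with the same presentation (now over $\kbq$) and the same coproduct/counit formulas, with no relation collapsing; and, should a multiparametric integral/rational QUESA over $\liegl_n^p$ be set up in the evident parallel fashion, the non-degenerate pairing is gotten by restricting the one of Theorem~\ref{thm: pres-FhPhiglnp}\textit{(d)} and extending scalars, exactly as in Theorem~\ref{thm: pres-Fqmln}.

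The one genuinely delicate point is the algebraic--independence step: without it the polynomial ring $\kbqqm$ does not embed into $\k'[[\hbar]]$ compatibly with the exponential specialisations of the parameters, and the reduction to the formal multiparametric case $\FhPhiglnp$ breaks down. If instead one wishes to remain entirely in the polynomial world, the corresponding obstacle is to verify directly that the coproduct of \textit{(a.2)} respects the multiparameter--twisted relations of \textit{(a.1)}; but that computation is precisely the $2$--cocycle deformation calculation already performed in the proof of Theorem~\ref{thm: pres-FhPhiglnp}, so importing it through the embedding above is by far the more economical route.
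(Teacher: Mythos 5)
Your proposal is correct and follows essentially the route the paper intends: the paper declares the proof ``entirely similar'' to that of Theorem \ref{thm: pres-Fqmln}, i.e.\ one realises $\Fbqintmlnp$ as the $\kbqqm$--subsuperalgebra generated by the $x_{i{}j}$'s inside the formal multiparametric QFSHSA $\FhPhiglnp$ for a suitably generic $\Phi$, which is exactly your construction. Your explicit choice of indeterminate entries $t_{i{}j}$ and the algebraic--independence argument for $e^{\hbar}, e^{\hbar t_{i{}j}}$ (the point the paper only makes explicit in Remark \ref{rmks: QFSA's not-Hopf}\textit{(a)}), together with the PBW--based injectivity of the map onto the subalgebra, are a careful fleshing-out of that same argument rather than a different approach.
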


\vskip7pt

\begin{definition}  \label{def: Mp-polyn-QFSA's x MLnp}
 We call  $ \Fbqintmlnp \, $,  resp.\  $ \Fbqmlnp \, $,  \textit{the  \textsl{integral},  resp.\  \textit{rational},  multiparametric quantum function superalgebra over  $ \textit{ML}_{\,n}^{\,p} \, $}.   \hfill  $ \diamond $
\end{definition}

\vskip9pt

\begin{rmks}  \label{rmks: QFSA's not-Hopf}
    \textit{(a)}\;  Choose a field extension  $ \, \k' \, $  of  $ \k \, $,  and an antisymmetric matrix  $ \, \Phi = {\big( \phi_{t,\ell} \big)}_{t=1,\dots,n;}^{\ell=1,\dots,n;} \in \mathfrak{so}_n\big(\k'[[\hbar]]\big) \, $  such that the set  $ \, \big\{\, e^{\hbar \, \phi_{t,\ell}} := \exp(\hbar \, \phi_{t,\ell}) \,\big|\, t < \ell \,\big\} \cup \big\{\, e^\hbar := \exp(\hbar) \,\big\} \, $  is algebraically independent over  $ \k \, $.  Let  $ \FhPhiglnp $  be the corresponding multiparametric QFSHA over  $ \k'[[\hbar]] \, $.  Then, by its very construction,  $ \Fbqintmlnp $  comes equipped with a canonical embedding
\begin{equation*}  \label{eq: embed-Fbqint-FhPhi}
 \Fbqintmlnp \,\lhook\joinrel\relbar\joinrel\relbar\joinrel\longrightarrow\, \FhPhiglnp \; ,  \qquad  x_{i{}j} \mapsto  x_{i{}j}  \quad \big(\; \forall \;\; i \, , j \in I_n \,\big)
\end{equation*}
 which extends the embedding  $ \; \kbqqm \lhook\joinrel\relbar\joinrel\relbar\joinrel\longrightarrow \k'[[\hbar]] \; $  of their respective ground rings given by  $ \; c \mapsto c \; (\, \forall \; c \in \k \,) \, $,  $ \, q^{\pm 1} \mapsto e^{\pm \hbar} \, $,  $ \; q_{i{}j}^{\pm 1} \mapsto e^{\pm \hbar \, \phi_{i,j} / 2} \; \big(\, \forall \; i \, , j \in I_n \big) \; $.
                                                                                   \par
   Now, through this embedding, one sees that  $ \Fbqintmlnp $  also admits a canonical, non-degenerate pairing which is the multiparametric counterpart of that in  Theorem \ref{thm: pres-Fqmln}\textit{(c)},  and is described in fact by the very same formulas.  Unfortunately, the definition of the ``multiparametric counterpart of  $ \Uqintglnp $''  is somewhat tricky, so we do not delve into details any more.  Similar remarks apply to  $ \Fbqmlnp $  as well.
 \vskip3pt
    \textit{(b)}\;  Here again, we stress that both  $ \Fbqintmlnp $  and  $ \Fbqmlnp $  are just  $ \k $--superbial\-gebras,  but  \textsl{not Hopf\/}  superalgebras.  Again, to get Hopf-like objects one should enlarge those superbialgebras by adding the inverse of some carefully chosen ``quantum Berezinian'', which goes far beyond the scopes of the present work.  Also, about our choice of ``integral forms'' in this multiparameter setup, a similar comment as in  Remarks \ref{rmks: Fqmlnp-not-Hopf}\textit{(c)\/}  is in order again.
\end{rmks}

\vskip9pt

\begin{free text}  \label{free: Mp-QFSA's as 2-cocyc-deform.'s}
 \textbf{Multiparametric QFSA's as 2--cocycle deformations.}
 It is worth stressing that one can also obtain  $ \Fbqintmlnp \, $,  resp.\  $ \Fbqmlnp \, $,  from its ``uniparametric counterpart''  $ \Fqintmlnp \, $,  resp.\  $ \Fqmlnp \, $,  directly via a process of deformation by 2--cocycle, after an initial extension of scalars.  We present the construction for  $ \Fbqintmlnp \, $,  from which that for  $ \Fbqmlnp $  follows too, e.g.\ by scalar extension.
 \vskip5pt
   Let  $ \; \Fqbullintmlnp := \kbqqm \otimes_{\k[\,q\,,\,q^{-1}]} \Fqintmlnp \; $,  \,with its built-in structure of  $ \kbqqm $--superbialgebra; then let us extend again to the larger superbialgebra
 $ \; F_{q,\,\bullet}^{\sqrt{\textsl{int}}}\big[M\!L_{\,n}^{\,p}\big] := \k\big[\,q^{\pm 1/2},\bq^{\pm 1/2}\big] \otimes_{\k[\,q^{\pm 1}\,,\,\bq^{\pm 1}]} \Fqbullintmlnp \; $
 where  $ \k\big[q^{\pm 1/2},\bq^{\pm 1/2}\big] $  is the extension of  $ \kbqqm $  obtained by taking formal square roots  $ \, q^{\pm 1/2} $  and  $ \, q_{i{}j}^{\,\pm 1/2} \, $  of  $ q^{\pm 1} $  and of  $ \, q_{i{}j}^{\,\pm 1} \, $  ($ \, \forall \; i \, , j \in I_n \, $).  Due to  Theorem \ref{thm: PBW x Fqmlnp},  the set of truncated ordered monomials (with obvious abuse of notation)
  $$  \mathbb{B}  \; := \;  \bigg\{\, {\textstyle \mathop{\overrightarrow{\prod}}\limits_{i, j \in I_n}} \hskip-3pt x_{i{}j}^{\,e_{i{}j}} \;\bigg|\; e_{i{}j} \in \NN \, , \, \forall \; i \, , j \in \! I_n \, , \; e_{i{}j} \leq 1 \text{\;\ if\;\ }  p_{i,j} = \one \,\bigg\}  $$
 is a  $ \k\big[q^{\pm 1/2},\,\bq^{\pm 1/2}\big] $--basis  of  $ F_{q,\,\bullet}^{\sqrt{\textsl{int}}}\big[M\!L_{\,n}^{\,p}\big] \, $.  Then we use this  $ \k\big[q^{\pm 1/2},\,\bq^{\pm 1/2}\big] $--basis  to define a map  $ \, \sigma_\Phipicc \in {\Big( F_{q,\,\bullet}^{\sqrt{\textsl{int}}}\big[M\!L_{\,n}^{\,p}\big] \otimes F_{q,\,\bullet}^{\sqrt{\textsl{int}}}\big[M\!L_{\,n}^{\,p}\big] \Big)}^* \, $  by setting
\begin{equation}  \label{eq: sigmaPhi-x-Fbqintmlnp}
 \sigma_\Phipicc \bigg(\, {\textstyle \mathop{\overrightarrow{\prod}}\limits_{i, j \in I_n}} \hskip-3pt x_{i{}j}^{\,e'_{i{}j}} \, , {\textstyle \mathop{\overrightarrow{\prod}}\limits_{i, j \in I_n}} \hskip-3pt x_{i{}j}^{\,e''_{i{}j}} \bigg)  \; := \;  {\textstyle \prod\limits_{i \not= j}} \, \delta_{e'_{i{}j},0} \, \delta_{e''_{i{}j},0} \, {\textstyle \prod\limits_{r,s=1}^n} \, q_{r,s}^{\,e'_{r{}r} \, e''_{s{}s}}
\end{equation}
 Then this  $ \sigma_\Phipicc $  is a 2--cocycle for the Hopf algebra  $ F_{q,\,\bullet}^{\sqrt{\textsl{int}}}\big[M\!L_{\,n}^{\,p}\big] \, $,  in the usual sense: indeed, if we pick the embedding  $ \, \Fqintmlnp \lhook\joinrel\longrightarrow \Fhglnp \, $  in  Theorem \ref{thm: pres-Fhg}\textit{(d)\/}  and extend it to an embedding  $ \, F_{q,\,\bullet}^{\sqrt{\textsl{int}}}\big[M\!L_{\,n}^{\,p}\big] \lhook\joinrel\relbar\joinrel\relbar\joinrel\longrightarrow \k\big[q^{\pm 1/2},\bq^{\pm 1/2}\big] \!\mathop{\otimes}\limits_{\k[\,q^{+1/2},\,q^{-1/2}]}\hskip-5pt \Fhglnp \, $  (via scalar extension), then the  $ \sigma_\Phipicc $  defined by  \eqref{eq: sigmaPhi-x-Fbqintmlnp}  is nothing but the restriction to  $ F_{q,\,\bullet}^{\sqrt{\textsl{int}}}\big[M\!L_{\,n}^{\,p}\big] $  of the 2--cocycle  $ \sigma_\Phipicc $  of  $ \, \k\big[q^{\pm 1/2},\bq^{\pm 1/2}\big] \!\mathop{\otimes}\limits_{\k[\,q^{+1/2},\,q^{-1/2}]}\hskip-5pt \Fhglnp \, $  given (via scalar extension) by  \eqref{eq: sigma_Phi}  in  \S \ref{free: constr.-FhPhiG}   --- in particular, then, the latter is a 2--cocycle just because the former is.
                                                                           \par
   Using the 2--cocycle  $ \sigma_\Phipicc $  of  $ F_{q,\,\bullet}^{\sqrt{\textsl{int}}}\big[M\!L_{\,n}^{\,p}\big] $  we can construct the deformed Hopf superalgebra  $ \, {\Big( F_{q,\,\bullet}^{\sqrt{\textsl{int}}}\big[M\!L_{\,n}^{\,p}\big] \Big)}_{\!\sigma_\Phipicc} \, $.  Then, repeating the analysis made in  \S \ref{free: constr.-FhPhiG}  and in the proof of  Theorem \ref{thm: pres-FhPhiglnp}  (up to minimal changes) we find that
 \vskip7pt
   \centerline{ \textsl{$ \underline{\text{Claim}} $:}\;  \textit{$ {\Big( F_{q,\,\bullet}^{\sqrt{\textsl{int}}}\big[M\!L_{\,n}^{\,p}\big] \Big)}_{\!\sigma_\Phipicc} \, $  has the structure described in  Theorem \ref{thm: pres-Fbqmln},} }
   \centerline{ \textsl{but}  \textit{for the scalar extension from\/  $ \kbqqm $  to\/  $ \k\big[q^{\pm 1/2},\bq^{\pm 1/2}\big] \, $} }
 \vskip7pt
   Finally, at this point  $ \Fbqintmlnp $  identifies with the  $ \kbqqm $--subalgebra  of  $ {\Big( F_{q,\,\bullet}^{\sqrt{\textsl{int}}}\big[M\!L_{\,n}^{\,p}\big] \Big)}_{\sigma_\Phipicc} $  generated (once again!) by the  $ x_{i{}j} $'s.  So, in force of the above  \textsl{$ \underline{\text{Claim}} \, $},  we can loosely say (just neglecting any intermediate change of scalars) that
 \vskip3pt
   \centerline{ \textit{``$ \, \Fbqintmlnp $  is a 2--cocycle deformation of  $ \Fqintmlnp $''} }
 \vskip5pt
\noindent
 and similarly with  $ \Fbqmlnp  $,  resp.\  $ \Fqmlnp  $,  replacing  $ \Fbqintmlnp  $,  resp.\  $ \Fqintmlnp $.
\end{free text}

\vskip11pt

   The following ``multiparametric PBW theorem'' (an ``integral version'' of  Theorem \ref{thm: PBW x FhPhiglnp})  now provides a  $ \kbqqm $--basis  of ordered (truncated) monomials:

\vskip15pt

\begin{theorem}  \label{thm: PBW x Fbqmlnp}
 \textsl{(PBW Theorem for  $ \Fbqintmlnp $  and  $ \Fbqmlnp \, $)}
 Let us fix any total order in the set  $ \, \big\{\, x_{i{}j} \,\big|\, i , j \in I_n \,\big\} \, $  of generators of  $ \Fbqintmlnp \, $.  Then the set
  $$  \mathbb{B}  \; := \;  \bigg\{\, {\textstyle \mathop{\overrightarrow{\prod}}\limits_{i, j \in I_n}} \hskip-3pt x_{i{}j}^{\,e_{i{}j}} \;\bigg|\; e_{i{}j} \in \NN \, , \, \forall \; i \, , j \in \! I_n \, , \; e_{i{}j} \leq 1 \text{\;\ if\;\ }  p_{i,j} = \one \,\bigg\}  $$
 of all truncated ordered monomials in the  $ x_{i{}j} $'s  is a\/  $ \kbqqm $--basis  of the\/  $ \kqqm $--module  $ \Fbqmlnp \, $.  In particular, then, the  $ \kbqqm $--module  $ \Fbqintmlnp $  is free.
                                                                    \par
   A parallel result holds for  $ \, \Fbqmlnp := \kbq \otimes_{\k[q^{\pm 1},\,\bq^{\pm 1}]} \Fbqintmlnp \, $  as well, namely\/  $ \mathbb{B} $  is a  $ \kbq $--basis  of  $ \Fbqmlnp \, $.
\end{theorem}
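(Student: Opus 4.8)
The plan is to imitate, step by step, the proof of Theorem \ref{thm: PBW x Fqmlnp}: the spanning half goes through essentially unchanged, while the linear-independence half needs one structural replacement, since the ground ring $\kbqqm$ is no longer a principal ideal domain. As in that proof, it suffices to treat $\Fbqintmlnp$; the statement for $\Fbqmlnp = \kbq \otimes_{\k[q^{\pm 1},\bq^{\pm 1}]} \Fbqintmlnp$ then follows by scalar extension along the flat (localisation) map $\kbqqm \hookrightarrow \kbq$, which turns a $\kbqqm$--module basis into a $\kbq$--module basis.

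For the spanning part I would first observe that giving degree $1$ to every generator $x_{i{}j}$ makes $\Fbqintmlnp$ an $\NN$--graded $\kbqqm$--algebra, because the relations of Theorem \ref{thm: pres-Fbqmln}\textit{(a.1)} are homogeneous, and write $\Fbqintmlnp = \bigoplus_{m \in \NN} \F_m$. Then I would reuse verbatim the weight function $w(\underline{x}) \in \big(\NN^{\,n^2+2},\leq_{lex}\big)$ from the proof of Theorem \ref{thm: PBW x Fqmlnp} and check that each of the five families of relations rewrites a monomial in the $x_{i{}j}$'s as a $\kbqqm$--linear combination of monomials of strictly smaller weight. The only new feature compared with the uniparametric case is that the scalar coefficients now also carry powers of the units $q_{\,r,s} \in \kbqqm$ alongside the signs and the powers of $q_i$; being invertible, these powers play no role in the weight bookkeeping. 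Inducting on $\leq_{lex}$ then gives that each $\F_m$ is spanned over $\kbqqm$ by the truncated ordered monomials $\mathbb{B}_m$ of degree $m$, so $\mathbb{B}$ spans $\Fbqintmlnp$.

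For linear independence I cannot copy the ``reduce modulo $q-1$'' trick of Theorem \ref{thm: PBW x Fqmlnp}, precisely because over the non-PID $\kbqqm$ a family of scalars with $\gcd = 1$ may still lie in a common maximal ideal, so specialising at a single point need not detect a relation. Instead I would anchor the argument to the already-established PBW theorem for the \emph{formal} multiparametric object. Pick a field extension $\k'$ of $\k$ and an antisymmetric matrix $\Phi = (\phi_{t,\ell}) \in \mathfrak{so}_n(\k'[[\hbar]])$ whose entries $\phi_{t,\ell}$ (for $t<\ell$) are algebraically independent over $\k$; then $\{e^{\hbar\phi_{t,\ell}} \mid t<\ell\}\cup\{e^\hbar\}$ is algebraically independent over $\k$ (a standard fact, since those $\phi_{t,\ell}$ together with $1$ are linearly independent over $\QQ$), which is exactly the hypothesis of Remarks \ref{rmks: QFSA's not-Hopf}\textit{(a)}. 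That remark then provides a $\kbqqm$--linear algebra embedding $\Fbqintmlnp \hookrightarrow \FhPhiglnp$ (over the ring embedding $q^{\pm1}\mapsto e^{\pm\hbar}$, $q_{i{}j}^{\pm1}\mapsto e^{\pm\hbar\phi_{i,j}/2}$) sending each $x_{i{}j}$ to the generator $x_{i{}j}$ of $\FhPhiglnp$. By Theorem \ref{thm: PBW x FhPhiglnp} the set $\mathbb{B}$ is a topological $\kh$--basis of $\FhPhiglnp$, hence in particular $\k'[[\hbar]]$--linearly independent, hence a fortiori $\kbqqm$--linearly independent inside $\FhPhiglnp$; pulling this back through the $\kbqqm$--linear injection shows $\mathbb{B}$ is $\kbqqm$--linearly independent in $\Fbqintmlnp$ as well. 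Combined with the spanning statement, $\mathbb{B}$ is a $\kbqqm$--basis, $\Fbqintmlnp$ is $\kbqqm$--free, and the $\Fbqmlnp$ case follows at once.

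The one genuinely delicate point, and the only place where the multiparametric proof departs from the uniparametric one, is this last step: replacing ``specialise at $q=1$'' by an embedding into $\FhPhiglnp$, together with the (routine, but not entirely trivial) verification that such an embedding with the required algebraic-independence property exists. A variant avoiding Remarks \ref{rmks: QFSA's not-Hopf}\textit{(a)} would be to use \S\ref{free: Mp-QFSA's as 2-cocyc-deform.'s}: there $\k\big[q^{\pm 1/2},\bq^{\pm 1/2}\big]\otimes_{\kbqqm}\Fbqintmlnp$ is identified with a $2$--cocycle deformation of $F_{q,\,\bullet}^{\sqrt{\textsl{int}}}\big[M\!L_{\,n}^{\,p}\big]$, which has $\mathbb{B}$ as a module basis by Theorem \ref{thm: PBW x Fqmlnp}; since a $2$--cocycle deformation leaves the underlying module untouched and $\kbqqm\hookrightarrow\k\big[q^{\pm 1/2},\bq^{\pm 1/2}\big]$ is faithfully flat, faithfully flat descent again yields that $\mathbb{B}$ is a $\kbqqm$--basis of $\Fbqintmlnp$.
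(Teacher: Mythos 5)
Your proof is correct, but your primary route is genuinely different from the paper's. The paper never passes through the formal object: it derives everything from the 2--cocycle picture of \S \ref{free: Mp-QFSA's as 2-cocyc-deform.'s}, observing that inside  $ {\Big( F_{q,\,\bullet}^{\sqrt{\textsl{int}}}\big[M\!L_{\,n}^{\,p}\big] \Big)}_{\!\sigma_\Phipicc} $  each truncated ordered monomial for the \emph{deformed} product equals the corresponding monomial for the \emph{undeformed} product up to an invertible monomial coefficient in  $ q^{\pm 1/2} $  and the  $ q_{i{}j}^{\,\pm 1/2} $;  hence the uniparametric PBW theorem (Theorem \ref{thm: PBW x Fqmlnp}) transfers directly, giving linear independence of  $ \mathbb{B} $  over  $ \k\big[q^{\pm 1/2},\bq^{\pm 1/2}\big] $,  a fortiori over  $ \kbqqm $  inside  $ \Fbqintmlnp $,  while spanning over  $ \kbqqm $  is obtained by the same rewriting argument you use. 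You instead obtain independence by mapping  $ \Fbqintmlnp $  into the formal multiparametric QFSHSA  $ \FhPhiglnp $  (Remarks \ref{rmks: QFSA's not-Hopf}\textit{(a)}) and invoking Theorem \ref{thm: PBW x FhPhiglnp}: this buys you a short argument resting on one already-stated formal PBW theorem, at the price of choosing  $ \Phi $  with the algebraic-independence property and of leaning on the embedding asserted in that remark --- though, as you could make explicit, you only need the well-defined algebra morphism together with injectivity of the ring map  $ \kbqqm \hookrightarrow \k'[[\hbar]] $,  not injectivity of the algebra map itself, so no circularity arises. Your closing variant is essentially the paper's actual proof, but as phrased it is a little loose: ``the deformation leaves the module untouched'' plus faithfully flat descent does not by itself compare monomials taken with respect to the deformed product with those for the undeformed one (that is precisely the unit-coefficient observation above), nor does descent by itself identify the  $ \kbqqm $--form  $ \Fbqintmlnp $  inside the extended deformed algebra --- the paper settles that by the separate spanning argument over  $ \kbqqm $,  exactly as in your main route.
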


\begin{proof}
 Wee can mimic the proof of  Theorem \ref{thm: PBW x Fqmlnp},  yet we take a different path.  Again, we prove the statement for  $ \Fbqintmlnp \, $,  the one for  $ \Fbqmlnp $  follows easily.
 \vskip3pt
   In short, the claim follows at once from the fact that  $ \Fbqintmlnp $  is (up to details) a 2--cocycle deformation of  $ \Fqintmlnp \, $.  Now, formula  \eqref{eq: sigmaPhi-x-Fbqintmlnp}  together with the analysis performed in  \S \ref{free: constr.-FhPhiG}  (with minimal, irrelevant changes to adapt it to the ``polynomial setup'') show that, through the deformation process, every monomial in  $ \mathbb{B} $  for the deformed algebra (i.e., w.r.t.\ the new, deformed product) coincide with the same monomial in the undeformed algebra (i.e., w.r.t.\ the old, undeformed product)  \textsl{but for a coefficient which is a monomial in  $ q^{\pm 1/2} $  and in the  $ q_{i{}j}^{\,\pm 1/2} $}.  Thus, the fact that they form a basis in  $ \Fqintmlnp $  implies that they form a basis in  $ \Fbqintmlnp $  too.
 \vskip3pt
   Indeed, to be precise, the above proves that following.  With notations and assumptions as in  \S \ref{free: Mp-QFSA's as 2-cocyc-deform.'s},  the fact that the truncated, ordered monomials (for the initial product) form a basis of  $ \Fqintmlnp $  implies the same statement for the extended superbialgebra  $ F_{q,\,\bullet}^{\sqrt{\textsl{int}}}\big[M\!L_{\,n}^{\,p}\big] $   --- which is obvious ---   and then also that the similar monomials,  \textsl{with respect to the  \textit{deformed}  product},  do form a basis of the deformed superbialgebra  $ {\Big( F_{q,\,\bullet}^{\sqrt{\textsl{int}}}\big[M\!L_{\,n}^{\,p}\big] \Big)}_{\!\sigma_\Phipicc} \, $.  In particular, these monomials are linearly independent over  $ \k\big[q^{\pm 1/2},\bq^{\pm 1/2}\big] \, $,  hence also over  $ \kbqqm $ inside  $ \Fbqintmlnp \, $,  identified with the  $ \kbqqm $--subsuperalgebra  of  $ {\Big( F_{q,\,\bullet}^{\sqrt{\textsl{int}}}\big[M\!L_{\,n}^{\,p}\big] \Big)}_{\!\sigma_\Phipicc} $  generated by the  $ x_{i{}j} $'s  (see  \S \ref{free: Mp-QFSA's as 2-cocyc-deform.'s}).  On the other hand, these monomials also span  $ {\Big( F_{q,\,\bullet}^{\sqrt{\textsl{int}}}\big[M\!L_{\,n}^{\,p}\big] \Big)}_{\!\sigma_\Phipicc} $  over  $ \kbqqm \, $,  by the same argument we already applied in the ``uniparametric'' case of  $ \Fqintmlnp $   --- cf.\ the proof of  Theorem \ref{thm: PBW x Fqmlnp}  ---   hence we are done.
\end{proof}

\vskip11pt
  \eject

\begin{free text}  \label{free: specialisations-FqPhimlnp}
 \textbf{Specialisations of integral QFSA's.}
   Let us consider ``integral'' QFSA's  $ \Fqintmlnp $  and  $ \Fbqintmlnp \, $.  Let  $ R $  be any  $ \k $--algebra,  and let  $ \, \chi : \kqqm \relbar\joinrel\longrightarrow R \, $  and  $ \, \mathbf{\psi} : \kbqqm \relbar\joinrel\longrightarrow R \, $  be  $ \k $--algebra  morphisms.  Then the  $ R $--superbialgebras
 \vskip5pt
  $$  F_{R,\,\chi}^{\,\textsl{int}}\big[M\!L_{\,n}^{\,p}\big]  \, := \, R \hskip-7pt\mathop{\otimes}\limits_{\k[q^{+1},\,q^{-1}]}\hskip-7pt \Fqintmlnp  \quad  ,   \qquad  F_{R,\,\mathbf{\psi}}^{\Phipicc,\,\textsl{int}}\big[M\!L_{\,n}^{\,p}\big]  \, := \,  R \hskip-7pt\mathop{\otimes}\limits_{\k[q^{\pm 1},\,\bq^{\pm 1}]}\hskip-7pt \Fbqintmlnp  $$
 \vskip5pt
\noindent
 given by scalar extension   --- via  $ \chi $  and via  $ \mathbf{\psi} \, $,  respectively ---   can be legitimally called ``specialisations'' of  $ \Fqintmlnp $  and  $ \Fbqintmlnp \, $,  respectively.
 \vskip5pt
   When  $ \, \chi\big(q^{\pm 1}\big) = 1 \, $,  we have  $ \; F_{R,\,\chi}^{\,\textsl{int}}\big[M\!L_{\,n}^{\,p}\big] \, \cong \, R \otimes_\k F\big[{M\!L}_{\,n}^{\,p}\big] \, $  as Hopf superalgebras over  $ R \, $:  in addition,  $ F\big[{M\!L}_{\,n}^{\,p}\big] $  inherits from this ``uniparametric'' quantisation  $ \Fqintmlnp $  a Poisson bracket  $ \{\,\ ,\ \} $  which makes  $ {M\!L}_{\,n}^{\,p} $  into a Poisson monoid.
 \vskip5pt
   Similarly, when  $ \, \mathbf{\psi}\big(q^{\pm 1}\big) = 1 = \mathbf{\psi}\big(q_{i{}j}^{\pm 1}\big) \, $  for all  $ \, i \, , j \in I_n \, $,  then we find again that  $ \; F_{R,\,\mathbf{\psi}}^{\Phi,\,\textsl{int}}\big[M\!L_{\,n}^{\,p}\big] \, \cong \, R \otimes_\k F\big[{M\!L}_{\,n}^{\,p}\big] \, $  as Hopf superalgebras over  $ R \, $.  In addition, if  $ \, \mathbf{\psi}(q_{\,i{}j}) = {\big( \mathbf{\psi}(q) \big)}^{z_{i{}j}} \, $  for some  $ \, z_{i{}j} \in \ZZ \, $,  then a Poisson bracket  $ {\{\,\ ,\ \}}_{\mathbf{\psi}} $  is canonically defined on  $ \, R \,\otimes_\k F\big[{M\!L}_{\,n}^{\,p}\big] \, $,  \,which makes  $ {M\!L}_{\,n}^{\,p} $  into a Poisson monoid whose bracket depends on the ``multiparameter'' encoded by  $ \mathbf{\psi} \, $.
\end{free text}

\vskip15pt

\subsection{Comparison with Manin's work}  \label{subsec: compare-Manin}
 \vskip7pt
   We conclude this work with a quick comparison between our results and those by Manin in  \cite{Ma}.  Indeed, Manin considers multiparametric quantisations of  $ F\big[{M\!L}_{\,n}^{\,p}\big] \, $,  that we denote by  $ \F_\bq^\Mpicc\big[{M\!L}_{\,n}^{\,p}\big] \, $:  they are generated by elements  $ z_{i{}j} \, (\, i \, , j \in I_n \,) \, $  with relations depending on an antisymmetric matrix of invertible parameters  $ {\big( q_{i{}j} \big)}_{i \in I_n}^{j \in I_n} \, $.  Therefore, we are naturally led to compare any such  $ \F_\bq^\Mpicc\big[{M\!L}_{\,n}^{\,p}\big] $  with our polynomial multiparametric QFSA's  $ \Fbqmlnp \, $.
 \vskip5pt
   Beyond the first-sight similarity of their own constructions   --- as both (super)alge\-bras have essentially ``the same'' set of generators and relations depending on the same set of parameters ---   same extra similarities also show up in some of the relations between generators.  One then might be led to guess that, up to suitably re-writing some relations (in either one of the two presentations, or in both), one can eventually achieve the same kind of presentation for both algebras, thus concluding that they are isomorphic, at least  \textsl{as (super)algebras}.  On the other hand, failing to get this would not allow us to deduce that the two are non-isomorphic either.
 \vskip5pt
   Nevertheless, there is another difference which seems to be a major ``obstruction'' for  $ \F_\bq^\Mpicc\big[{M\!L}_{\,n}^{\,p}\big] $  and  $ \Fbqmlnp $  to be isomorphic  \textsl{as superbialgebras}.  Indeed, the generators  $ z_{i{}j} $  of  $ \F_\bq^\Mpicc\big[{M\!L}_{\,n}^{\,p}\big] $  and the generators  $ x_{i{}j} $  of  $ \Fbqmlnp $  behave rather differently with respect to the coproduct, namely (for all  $ \, i \, , j \in I_n \, $)
 \vskip3pt
  $$  \Delta(z_{i{}j}) \, = \, {\textstyle \sum_{k=1}^n} z_{i{}k} \otimes z_{k{}j}   \qquad  \text{and}  \qquad   \Delta(x_{i{}j}) \, = \, {\textstyle \sum_{k=1}^n} {(-1)}^{p_{i{}k} \, p_{k{}j}} x_{i{}k} \otimes x_{k{}j}  $$
 \vskip5pt
   Although this is not a conclusive argument, it certainly drives us to guess that  $ \F_\bq^\Mpicc\big[{M\!L}_{\,n}^{\,p}\big] $  and  $ \Fbqmlnp $  might indeed be non-isomorphic.

\bigskip
 \vfill
  \eject

\appendix
\section{Further details for  \S \ref{free: constr.-FhG}}  \label{appendix}
 \vskip11pt
   In the present Appendix we present the details of the intermediate claims which were left unproved in the construction of  $ \Fhglnp $  in  \S \ref{free: constr.-FhG}.

\vskip9pt

   \textit{$ \underline{\text{Proof of (4--a)}} $:}\;  Proving  $ \, \big\langle\, 1 \, , \rho \,\big\rangle = 0 \, $  is trivial, while the second case is a sheer matter of straightforward computations.  In detail, direct calculations give:
 \vskip5pt
   ---  for  $ \; \rho = \varGamma_k \, \varGamma_\ell - \varGamma_\ell \, \varGamma_k \; $  we get
  $$  \displaylines{
   \quad   \langle x_{i{}j} \, , \varGamma_k \, \varGamma_\ell - \varGamma_\ell \, \varGamma_k \rangle  \; = \;
{\textstyle \sum\limits_{a=1}^n}\, {(-1)}^{p_{i{}a} p_{a{}j}} \big\langle\, x_{i{}a} \otimes x_{a{}j} \, , \varGamma_k \otimes \varGamma_\ell - \varGamma_\ell \otimes \varGamma_k \,\big\rangle  \; =   \hfill  \cr
   = \;  {\textstyle \sum\limits_{a=1}^n}\, {(-1)}^{p_{i{}a} p_{a{}j}} \Big( \big\langle\, x_{i{}a} \, , \varGamma_k \,\big\rangle \, \big\langle\, x_{a{}j} \, , \varGamma_\ell \,\big\rangle - \big\langle\, x_{i{}a} \, , \varGamma_\ell \,\big\rangle \, \big\langle\, x_{a{}j} \, , \varGamma_k \,\big\rangle \Big)  \; =  \cr
   \hfill   = \;  {\textstyle \sum\limits_{a=1}^n}\, {(-1)}^{p_{i{}a} p_{a{}j}} \Big( \delta_{i,a,k,\ell,j} - \delta_{i,a,k,\ell,j} \Big)  \; = \;  0   \quad  }  $$
 where hereafter we write  $ \, \delta_{e_1,\dots,e_s} := 1 \, $  if  $ \, e_1 = \cdots = e_s \, $  and  $ \, \delta_{e_1,\dots,e_s} := 0 \, $  otherwise;
 \vskip5pt
   ---  for  $ \; \rho = \varGamma_k \, E_r - E_r \, \varGamma_k - (\delta_{k,\,r} - \delta_{k,\,r+1}) \, E_r \; $  we get
  $$  \displaylines{
   \big\langle\, x_{i{}j} \, , \varGamma_k \, E_r - E_r \, \varGamma_k - (\delta_{k,\,r} - \delta_{k,\,r+1}) \, E_r \big\rangle  \; =   \hfill  \cr
   =  \left(\, {\textstyle \sum\limits_{a=1}^n}\, {(-1)}^{p_{i{}a} p_{a{}j}} \big\langle\, x_{i{}a} \otimes x_{a{}j} \, , \varGamma_k \otimes E_r - E_r \otimes \varGamma_k \big\rangle \!\right) - (\delta_{k,\,r} - \delta_{k,\,r+1}) \big\langle\, x_{i{}j} \, , E_r \big\rangle  \; =  \cr
   \quad \quad \quad   = \;  \bigg(\; {\textstyle \sum\limits_{a=1}^n}\, \Big( {(-1)}^{p_{i{}a} p_{a{}j}} \big\langle\, x_{i{}a} \, , \varGamma_k \big\rangle \, \big\langle\, x_{a{}j} \, , E_r \big\rangle \, -   \hfill  \cr
   - \, {(-1)}^{(p_{i,a} + p_{r,r+1}) p_{a,j}} \big\langle\, x_{i{}a} \, , E_r \big\rangle \, \big\langle\, x_{a{}j} \, , \varGamma_k \big\rangle \Big) \bigg) \, -  \cr
   \hfill   - \, (\delta_{k,r} - \delta_{k,r+1}) \, \big\langle\, x_{i{}j} \, , E_r \big\rangle  \; =   \quad  \cr
   \quad \quad   = \;  \left(\; {\textstyle \sum\limits_{a=1}^n}\, \Big( {(-1)}^{p_{i{}a} p_{a{}j}} \delta_{i,a,k} \, \delta_{a+1,j,r+1} - {(-1)}^{(p_{i,a} + p_{r,r+1}) p_{a,j}} \delta_{i+1,a,r+1} \, \delta_{a,j,k} \Big) \right) \, -   \hfill  \cr
   \hfill   - \, (\delta_{k,r} - \delta_{k,r+1}) \, \delta _{i+1,j,r+1}  \; =   \quad \quad  \cr
   \quad \quad \quad   = \;  \delta_{i,k} \, \delta_{i+1,j,r+1} - \delta_{i+1,j,r+1} \, \delta_{j,k} - (\delta_{k,r} - \delta_{k,r+1}) \, \delta _{i+1,j,r+1}  \; = \;  0  \quad  }  $$
 and a similar computation for  $ \; \rho = \varGamma_k \, F_r - F_r \, \varGamma_k - (\delta_{k,\,r} - \delta_{k+1,\,r}) \, F_r \; $  yields also
  $$  \big\langle\, x_{i{}j} \, , \varGamma_k \, F_r - F_r \, \varGamma_k - (\delta_{k,\,r} - \delta_{k+1,\,r}) \, F_r \big\rangle  \; = \; 0  $$
 \vskip3pt
   ---  for  $ \; \rho \, = \, E_r \, F_s - {(-1)}^{p_{r,r+1} p_{s+1,s}} F_s \, E_r - \delta_{r{}s} \frac{\;e^{+\hbar H_r} - e^{-\hbar H_r}\;}{q_r^{+1} - q_r^{-1}} \; $,  \; we split the computation, with  $ \; \big\langle\, x_{i{}j} \, , E_r \, F_s - {(-1)}^{p_{r,r+1} p_{s+1,s}} F_s \, E_r \big\rangle \; $  and  $ \; \left\langle\, x_{i{}j} \, , \, \delta_{r{}s} \frac{\;e^{+\hbar H_r} - e^{-\hbar H_r}\;}{q_r^{+1} - q_r^{-1}} \,\right\rangle \; $  dealt with separately.  For the first term we get
  $$  \displaylines{
   \big\langle\, x_{i{}j} \, , E_r \, F_s - {(-1)}^{p_{r,r+1} p_{s+1,s}} F_s \, E_r \big\rangle  \; =   \hfill  \cr
   \qquad   = \;  {\textstyle \sum\limits_{a=1}^n}\, {(-1)}^{p_{i{}a} p_{a{}j}} \big\langle x_{i{}a} \otimes x_{a{}j} \, , E_r \otimes F_s - {(-1)}^{p_{r,r+1} p_{s+1,s}} F_s \otimes E_r \big\rangle \; =   \hfill
   }  $$
  $$  \displaylines{
   \qquad   =  \; {\textstyle \sum\limits_{a=1}^n}\, \Big( {(-1)}^{(p_{i,a} + p_{r,r+1}) p_{a,j}} \langle x_{i{}a} \, , E_r \rangle \langle x_{a{}j} \, , F_s \rangle \, -   \hfill  \cr
   \qquad \qquad \qquad   - \, {(-1)}^{(p_{s+1,s} + p_{i,a}) p_{a,j} + p_{r,r+1} p_{s+1,s}} \langle x_{i{}a} \, , F_s \rangle \langle x_{a{}j} \, , E_r \rangle \Big)  \; =   \hfill  \cr
   \qquad   =  \; {\textstyle \sum\limits_{a=1}^n}\, \Big( {(-1)}^{(p_{i,a} + p_{r,r+1}) p_{a,j}} \, \delta_{i+1,a} \, \delta_{i,r} \, \delta_{a,j+1} \, \delta_{j,s} \, -   \hfill  \cr
   \qquad \qquad \qquad   - \, {(-1)}^{(p_{s,s+1} + p_{i,a}) p_{a,j} + p_{r,r+1} p_{s+1,s}} \, \delta_{i,a+1} \, \delta_{s,a} \, \delta_{a,r} \, \delta_{a+1,j} \Big)  \; =   \hfill  \cr
   = \!  \Big(\! {(-\!1)}^{\! (p_{r,r+1} + p_{r,r+1}) p_{r+1,r}} \delta_{i,j} \, \delta_{r,s} \, \delta_{i,r} - {(-1)}^{\! (p_{r,r+1} + p_{r+1,r}) p_{r,r+1} + p_{r,r+1} p_{r+1,r}} \delta_{i,j} \, \delta_{r,s} \, \delta_{i,r+1} \Big)  =  \cr
  \hfill   = \;\,  \delta_{i,j} \, \delta_{r,s} \Big( \delta_{i,r} - {(-1)}^{p_{r,r+1}} \, \delta_{i,r+1} \Big)  } $$
 and for the second we find
  $$  \displaylines{
   \left\langle x_{i{}j} \; , \, \delta_{r{}s} \frac{\;e^{+\hbar H_r} - e^{-\hbar H_r}\;}{q_r^{+1} - q_r^{-1}} \right\rangle  \; =   \hfill  \cr
   = \;  \delta_{r,\,s} \, \delta_{i,\,j} \, \frac{\;e^{+\hbar \, ({(-1)}^{p(r)} \delta_{i,r} - {(-1)}^{p(r+1)} \delta_{i,r+1})} - e^{-\hbar \, ({(-1)}^{p(r)} \delta_{i,\,r} - {(-1)}^{p(r+1)} \delta_{i,\,r+1})}\;}{e^{+ \hbar \, {(-1)}^{p(r)}} - e^{- \hbar \, {(-1)}^{p(r)}}}  \; =  \cr
   \hfill   = \;  \delta_{i,\,j} \, \delta_{r,\,s} \, \big(\, \delta_{i,\,r} - {(-1)}^{p_{r,r+1}} \delta_{i,\,r+1} \,\big)  }  $$
 hence eventually direct comparison yields the result we were looking for;
 \vskip3pt
   ---  for the last cases, we need repeated applications of some intermediate formulas: the first one is
\begin{equation}  \label{eq: x(i,j) - ErEs}
   \big\langle\, x_{i{}j} \, , E_r E_s \big\rangle  \; = \;  \delta_{\,i,\,r,\,s-1,\,j-2}
\end{equation}
 that follows from
  $$  \displaylines{
   \quad   \big\langle\, x_{i{}j} \, , E_r E_s \big\rangle  \; = \;  \big\langle\, \Delta(x_{i{}j}) \, , E_r \otimes E_s \big\rangle  \; =   \hfill  \cr
   \quad \qquad   = \;  {\textstyle \sum\limits_{a=1}^n}\, {(-1)}^{p_{i{}a} p_{a{}j}} \, \big\langle\, x_{i{}a} \otimes x_{a{}j} \, , E_r \otimes E_s \big\rangle  \; =   \hfill  \cr
   \quad \qquad \qquad   = \;  {\textstyle \sum\limits_{a=1}^n}\, {(-1)}^{p_{i,a} p_{a,j} \, + \, p_{a,j} p_{r,r+1}} \, \big\langle\, x_{i{}a} \, , E_r \big\rangle \, \big\langle\, x_{a{}j} \, , E_s \big\rangle  \; =   \hfill  \cr
   \quad \qquad \qquad \qquad   = \;  {\textstyle \sum\limits_{a=1}^n}\, {(-1)}^{(p_{i,a} \, + \, p_{r,\,r+1}) \, p_{a,j}} \, \delta_{i,\,r} \, \delta_{a,\,r+1} \, \delta_{a,\,s} \, \delta_{j,\,s+1}  \; =   \hfill  \cr
   \quad \qquad \qquad \qquad \qquad   = \;  {(-1)}^{(p_{i,i+1} + p_{i,i+1}) \, p_{i+1,i+2}} \, \delta_{\,i,\,r,\,s-1,\,j-2}  \; =   \hfill  \cr
   \quad \qquad \qquad \qquad \qquad \qquad   = \;  {(-1)}^{\zero \,\cdot\, p_{i+1,i+2}} \, \delta_{\,i,\,r,\,s-1,\,j-2}  \; = \;  \delta_{\,i,\,r,\,s-1,\,j-2}   \hfill  }  $$
   \indent   It is the easily seen that  \eqref{eq: x(i,j) - ErEs}  implies  $ \; \big\langle\, x_{i{}j} \, , \rho \,\big\rangle = 0 \; $  when  $ \; \rho \, = \, E_i^{\,2} \; $  as in  \eqref{eq:UqgR4}  or  $ \; \rho \, = \, [E_i\,,E_j] \; $  as in  \eqref{eq:UqgR5}   --- for the  ``$ E $--half''  of either formula: a similar analysis takes care of the  ``$ F $--half''  case as well.
 \vskip5pt
   The second intermediate formula is
\begin{equation}  \label{eq: x(i,j) - ErEsEt}
   \big\langle\, x_{i{}j} \, , E_r E_s E_t \big\rangle  \; = \;  \delta_{i,\,r,\,s-1,\,t-2,\,j-3}
\end{equation}
 which is the output of the following calculation:
  $$  \displaylines{
   \quad   \big\langle\, x_{i{}j} \, , E_r E_s E_t \big\rangle  \; = \;  \big\langle \Delta(x_{i{}j}) \, , E_r E_s \otimes E_t \big\rangle  \; =   \hfill  \cr
   \quad \qquad   = \;  {\textstyle \sum\limits_{a=1}^n}\, {(-1)}^{p_{i{}a} p_{a{}j}} \big\langle x_{i{}a} \otimes x_{a{}j} \, , E_r E_s \otimes E_t \big\rangle  \; =   \hfill
  }  $$
  $$  \displaylines{
   \quad \qquad \qquad   = \;  {\textstyle \sum\limits_{a=1}^n}\, {(-1)}^{(p_{i,a} + p_{r,r+1} + p_{s,s+1}) \, p_{a,j}} \, \big\langle x_{i{}a} \, , E_r E_s \big\rangle \, \big\langle x_{a{}j} \, , E_t \big\rangle  \; =   \hfill  \cr
   \quad \qquad \qquad \qquad   = \;  {\textstyle \sum\limits_{a=1}^n}\, {(-1)}^{(p_{i,a} + p_{r,r+1} + p_{s,s+1}) \, p_{a,j}} \, \delta_{i,\,r,\,s-1,a-2} \, \delta_{a,\,t} \, \delta_{j,\,t+1}  \; =   \hfill  \cr
   \quad \qquad \qquad \qquad \qquad   = \;  {(-1)}^{(p_{i,i+2} + p_{i,i+1} + p_{i+1,i+2}) \, p_{i+2,i+3}} \, \delta_{i,\,r,\,s-1,\,t-2,\,j-3}  \; =   \hfill  \cr
   \quad \qquad \qquad \qquad \qquad \qquad   = \;  {(-1)}^{\zero \,\cdot\, p_{i+2,i+3}} \, \delta_{i,\,r,\,s-1,\,t-2,\,j-3}  \; = \;  \delta_{i,\,r,\,s-1,\,t-2,\,j-3}   \hfill  }  $$
   \indent   It is clear then that  \eqref{eq: x(i,j) - ErEsEt}  implies  $ \; \big\langle\, x_{i{}j} \, , \rho \,\big\rangle = 0 \; $  for every  $ \, \rho \, $  as in the left-hand side of  \eqref{eq:UqgR6}  concerning the generators  $ E_t \, $:  \,a parallel analysis takes care of the  ``$ F $--part''  of that same formula as well.
 \vskip5pt
   The third intermediate
   formula is
\begin{equation}  \label{eq: x(i,j) - ErEsEtEv}
\begin{gathered}
 \big\langle\, x_{i{}j} \, , E_r E_s E_t E_v \big\rangle  \; = \;  \delta_{i,\,r,\,s-1,\,t-2,\,v-3,\,j-4}
\end{gathered}
\end{equation}
 which comes out of the following computation:
  $$  \displaylines{
   \big\langle\, x_{i{}j} \, , E_r E_s E_t E_v \big\rangle  \; = \;  \big\langle \Delta(x_{i{}j}) \, , E_r E_s E_t \otimes E_v \big\rangle  \; =   \hfill  \cr
   \qquad   = \;  {\textstyle \sum\limits_{a=1}^n}\, {(-1)}^{p_{i{}a} p_{a{}j}} \big\langle x_{i{}a} \otimes x_{a{}j} \, , E_r E_s E_t \otimes E_v \big\rangle  \; =   \hfill  \cr
   \qquad \qquad   = \;  {\textstyle \sum\limits_{a=1}^n}\, {(-1)}^{p_{i,a} p_{a,j} + p_{a,j} p_{r,r+1} + p_{a,j} p_{s,s+1} + p_{a,j} p_{t,t+1}} \, \big\langle x_{i{}a} \, , E_r E_s E_t \big\rangle \, \big\langle x_{a{}j} \, , E_v \big\rangle  \; =   \hfill  \cr
   \qquad \qquad \qquad   = \;  {\textstyle \sum\limits_{a=1}^n}\, {(-1)}^{(p_{i,a} + p_{r,r+1} + p_{s,s+1} + p_{a,j} p_{t,t+1}) \, p_{a,j}} \, \delta_{i,\,r,\,s-1,\,t-2,a-3} \, \delta_{a,\,v} \, \delta_{j,\,v+1}  \; =   \hfill  \cr
   \qquad \qquad \qquad \qquad   = \;  {(-1)}^{(p_{i,i+3} + p_{i,i+1} + p_{i+1,i+2} + p_{i+2,i+3}) \, p_{i+3,i+4}} \, \delta_{i,\,r,\,s-1,\,t-2,\,v-3,\,j-4}  \; =   \hfill  \cr
   \qquad \qquad \qquad \qquad \qquad   = \;  {(-1)}^{\zero \,\cdot\, p_{i+3,i+4}} \, \delta_{i,\,r,\,s-1,\,t-2,\,v-3,\,j-4}  \,\; = \;\,  \delta_{i,\,r,\,s-1,\,t-2,\,v-3,\,j-4}  }  $$
%
%
   \indent   Then from  \eqref{eq: x(i,j) - ErEsEtEv}  one easily deduces that  $ \; \big\langle\, x_{i{}j} \, , \rho \,\big\rangle = 0 \; $  for every  $ \, \rho \, $  as in the left-hand side of  \eqref{eq:UqgR7},  first line   --- i.e. the one for generators  $ E_t \, $:  \,a parallel analysis takes care of the parallel case where the  $ F $'s  play the role of the  $ E $'s  as well.
 \vskip9pt
   \textit{$ \underline{\text{Proof of (4--b)}} $:}\;  The fact that  $ \, \mathcal{R} $  be a coideal in  $ \hat{\mathbb{U}}_\hbar $  is equivalent to the fact that the quotient topological  $ \kh $--superalgebra  $ \, \hat{\mathbb{U}}_\hbar \Big/ \mathcal{R} \, $  be in fact a bialgebra, with coproduct and counit given as in Theorem \ref{thm: Hopf-struct x Uhglnp}.  But that result stands true, hence we deduce (somewhat backwards) that  $ \, \mathcal{R} $  is indeed a coideal in  $ \hat{\mathbb{U}}_\hbar \, $,  \,q.e.d.
                                                                                 \par
   If instead one wants to go through a direct proof, showing that  $ \, \mathcal{R} $  is a coideal, then again this is a matter of sheer calculation.  In detail, computations show that the generators of the form
  $$  \displaylines{
   \varGamma_{k,\ell}  \; := \;  \varGamma_k \, \varGamma_\ell \, - \, \varGamma_\ell \, \varGamma_k  \cr
   E^{(\varGamma)}_{k,j}  \; := \;  \big[ \varGamma_k \, , E_j \big] \, - \, (\delta_{k,j} - \delta_{k,j+1}) \, E_j \cr
   E_i^{\,2}   \qquad  \big(\,\text{with \ }  \, p_{i,\,i+1} = \one \,\big)  \cr
   E_{i,j}  \; := \;  E_i^2 \, E_j \, - \, \big( q + q^{-1} \big) \, E_i \, E_j \, E_i \, + \, E_j \, E_i^2   \qquad   \big(\,\text{with \ }  \, p_{i,\,i+1} = \zero \, , \; |i-j| = 1 \,\big)  }  $$
 one finds that they are skew-primitive, namely
  $$  \displaylines{
   \Delta\big(\varGamma_{k,\ell}\big)  \; = \;  \varGamma_{k,\ell} \otimes 1 \, + \, 1 \otimes \varGamma_{k,\ell}  \cr
   \Delta\big(E^{\,(\varGamma)}_{k,j}\big)  \; = \;  E^{\,(\varGamma)}_{k,j} \otimes 1 \, + \, e^{+H_j} \otimes E^{\,(\varGamma)}_{k,j}  \cr
   \Delta\big(E_i^{\,2}\big)  \; = \;  E_i^{\,2} \otimes 1 \, + \, e^{+2 H_i} \otimes E_i^{\,2}  \cr
   \Delta\big(E_{i,j}\big)  \; = \;  E_{i,j} \otimes 1 \, + \, e^{+2\,\hbar\,H_i + H_j} \otimes E_{i,j}  }  $$
%
 The remaining generators are treated similarly, with similar outcomes.  Overall, this eventually allows to conclude that  $ \, \mathcal{R} $  is indeed a coideal, as claimed.
 \vskip9pt
   \textit{$ \underline{\text{Proof of (5--a)}} $:}\;  Proving that  $ \, \big\langle\, \eta \, , 1 \,\big\rangle = 0 \, $  is trivial: for every possible  $ \eta $  from the first four lines in  \eqref{eq: gen.'s-ideal-J-in-Fh},  in each product of the form  $ \, x_{r{}s} \, x_{t{}\ell} \, $  there is always  $ \, r=s \, $  or  $ \, \ell=t \, $,  \,hence  $ \; \big\langle\, x_{r{}s} \, x_{t{}\ell} \, , 1 \,\big\rangle \, = \, \epsilon\big(x_{r,\,s}\,x_{t,\ell}\big) \, = \, \epsilon(x_{r{}s}) \, \epsilon(x_{t{}\ell}) \, = 0 \; $,  \;that is enough to get  $ \, \big\langle\, \eta \, , 1 \big\rangle = 0 \, $.  The same holds as well for the  $ \eta $'s  from the fifth (and last) line in  \eqref{eq: gen.'s-ideal-J-in-Fh},  but for the cases when  $ \, i=j \, $  and  $ \, h=k \, $:  \,but then we have
  $$  \displaylines{
   \big\langle\, x_{i{}j} \, x_{h{}k} \, - \, {(-1)}^{p_{i{}j} p_{h{}k}} \, x_{h{}k} \, x_{i{}j} \, - \, {(-1)}^{p_{i{}j} + p_{i{}k} + p(i)} \big(\, e^{+\hbar} - e^{-\hbar} \,\big) \, x_{i{}k} \, x_{h{}j} \, , \, 1 \,\big\rangle  \; =   \hfill  \cr
   = \;  \epsilon(x_{i{}i}) \, \epsilon(x_{h{}h}) \, - \, {(-1)}^0 \, \epsilon(x_{h{}h}) \, \epsilon(x_{i{}i}) \, - \, {(-1)}^{p_{i{}j} + p_{i{}k} + p(i)} \big(\, e^{+\hbar} - e^{-\hbar} \,\big) \, \epsilon(x_{i{}h}) \, \epsilon(x_{h{}i})  \; =  \cr
   \hfill   = \;  1 - 1 - {(-1)}^{p_{i{}j} + p_{i{}k} + p(i)} \big(\, e^{+\hbar} - e^{-\hbar} \,\big) \, \delta_{i{}h} \, \delta_{h{}i}  \; = \;  1 - 1 - \big(\, e^{+\hbar} - e^{-\hbar} \,\big) \, 0 \; = \;  0  }  $$
 \vskip5pt
   As to proving  $ \, \big\langle\, \eta \, , \gamma \,\big\rangle = 0 \, $  (for all  $ \eta $'s  and all  $ \gamma $'s  as mentioned above), we proceed in steps.  To begin with, direct computations give
 \vskip3pt
\begin{equation}  \label{eq: x*x_E}
  \begin{gathered}
     \hskip-15pt   \big\langle\, x_{i{}j} \, x_{h{}k} \, , E_r \,\big\rangle  \; = \;
     \big\langle\, x_{i{}j} \otimes x_{h{}k} \, , \Delta(E_r) \big\rangle  \; =   \hfill  \\
     = \;  \big\langle\, x_{i{}j} \otimes x_{h{}k} \, , E_r \otimes 1 + e^{+\hbar H_r} \otimes E_r \,\big\rangle  \; =   \hfill  \\
     = \;  \big\langle\, x_{i{}j} \otimes x_{h{}k} \, , E_r \otimes 1 \,\big\rangle \, + \, \big\langle\, x_{i{}j} \otimes x_{h{}k} \, , \, e^{+\hbar H_r} \otimes E_r \,\big\rangle  \; =  \\
     = \;  {(-1)}^{p_{r,r+1} \, p_{h,k}} \big\langle\, x_{i{}j} \, , E_r \,\big\rangle \, \big\langle\, x_{h{}k} \, , 1 \,\big\rangle \, + \, \big\langle\, x_{i{}j} \, , \, e^{+\hbar H_r} \big\rangle \, \big\langle\, x_{h{}k} \, , E_r \,\big\rangle  \; =  \\
     = \;  {(-1)}^{p_{r,r+1} p_{h,k}} \, \delta_{i,\,r,j-1} \, \delta_{h,\,k} \, + \,
\delta_{i,\,j} \, e^{+\hbar({(-1)}^{p(r)} \delta_{i,r} - {(-1)}^{p(r+1)} \delta_{i,r+1})} \, \delta_{h,k-1,\,r}  \; =   \hfill  \\
     \hfill   = \;  \delta_{i,\,r,j-1} \, \delta_{h,\,k} \, + \,
     e^{+\hbar({(-1)}^{p(r)} \delta_{i,r} - {(-1)}^{p(r+1)} \delta_{i,r+1})} \, \delta_{i,\,j} \, \delta_{h,k-1,\,r}
  \end{gathered}
\end{equation}
 for  $ \, \gamma = E_r \, $,  \,then
\begin{equation}  \label{eq: x*x_G}
  \begin{gathered}
     \hskip-9pt   \big\langle\, x_{i{}j} \, x_{h{}k} \, , \varGamma_s \,\big\rangle  \; = \;
     \big\langle\, x_{i{}j} \otimes x_{h{}k} \, , \Delta(\varGamma_s) \big\rangle  \; =
     \big\langle\, x_{i{}j} \otimes x_{h{}k} \, , \varGamma_s \otimes 1 + 1 \otimes \varGamma_s \,\big\rangle  \; =   \hfill  \\
     = \; \big\langle\, x_{i{}j} \, , \varGamma_s \,\big\rangle \, \langle\, x_{h{}k} \, , 1 \,\rangle \, + \, \langle\, x_{i{}j} \, , 1 \,\rangle \, \,\big\langle x_{h{}k} \, , \varGamma_s \,\big\rangle  \; =  \\
     \hfill   = \;  \delta_{i{}s} \, \delta_{j{}s} \, \delta_{h{}k} \, + \, \delta_{i{}j} \, \delta_{h{}s} \, \delta_{k{}s}  \; = \;  \delta_{i{}j} \, \delta_{h{}k} \, (\delta_{i{}s} + \delta_{k{}s})
  \end{gathered}
\end{equation}
 for  $ \, \gamma = \varGamma_s \, $,  \,and finally
\begin{equation}  \label{eq: x*x_F}
  \begin{gathered}
     \hskip-15pt   \big\langle\, x_{i{}j} \, x_{h{}k} \, , F_r \,\big\rangle  \; = \;
     \big\langle\, x_{i{}j} \otimes x_{h{}k} \, , \Delta(F_r) \big\rangle  \; =   \hfill  \\
     \qquad   = \;  \big\langle\, x_{i{}j} \otimes x_{h{}k} \, , F_r \otimes e^{-\hbar H_r} + 1 \otimes F_r \,\big\rangle  \; =   \hfill  \\
     = \;  \big\langle\, x_{i{}j} \otimes x_{h{}k} \, , F_r \otimes e^{-\hbar H_r} \,\big\rangle \, + \, \big\langle\, x_{i{}j} \otimes x_{h{}k} \, , \, 1 \otimes F_r \,\big\rangle  \; =  \\
     = \;  {(-1)}^{p_{r,r+1} \, p_{h,k}} \big\langle\, x_{i{}j} \, , F_r \,\big\rangle \, \big\langle\, x_{h{}k} \, , e^{-\hbar H_r} \,\big\rangle \, + \, \big\langle\, x_{i{}j} \, , 1 \big\rangle \, \big\langle\, x_{h{}k} \, , F_r \,\big\rangle  \; =  \\
     = \;  {(-1)}^{p_{r,r+1} p_{h,k}} \, \delta_{i-1,\,r,j} \, \delta_{h,\,k} \, e^{-\hbar({(-1)}^{p(r)} \delta_{h,r} - {(-1)}^{p(r+1)} \delta_{h,r+1})} \, + \,
\delta_{i,\,j} \, \delta_{h-1,\,r,k}  \; =  \\
     \hfill   = \;  \delta_{i-1,\,r,j} \, \delta_{h,\,k} \, e^{-\hbar(\delta_{h,r} - {(-1)}^{p_{r,r+1}} \delta_{h,r+1})} \, + \,
     \delta_{i,\,j} \, \delta_{h-1,\,r,k}
  \end{gathered}
\end{equation}
 for  $ \, \gamma = F_r \, $.  From all this we get the following:
 \vskip5pt
   $ \underline{\text{If  $ \, p_{i{}j} = \one \, $}} \, $,  \,then  $ \, i \not= j \, $,  \,hence  \eqref{eq: x*x_E},  \eqref{eq: x*x_G}  and  \eqref{eq: x*x_F}  give
\begin{equation*}
   \big\langle\, x_{i{}j}^{\,2} \, , E_r \,\big\rangle \; = \;  0  \quad ,  \qquad
   \big\langle\, x_{i{}j}^{\,2} \, , \varGamma_s \,\big\rangle \; = \;  0  \quad ,  \qquad
   \big\langle\, x_{i{}j}^{\,2} \, , F_r \,\big\rangle \; = \;  0
\end{equation*}
 so we are done with the generators of  $ \J $  of the form  $ \, \eta = x_{i{}j}^{\,2} \, $  with  $ \, p_{i{}j} = \one \, $.
 \vskip5pt
   $ \underline{\text{If  $ \, j < k \, $}} \, $,  \,then  \eqref{eq: x*x_E}   --- applied twice ---   gives
\begin{equation*}
  \begin{gathered}
     \big\langle\, x_{i{}j} \, x_{i{}k} \, - \, {(-1)}^{p_{i{}j} p_{i{}k}} \, e^{+\hbar \, {(-1)}^{p(i)}} \, x_{i{}k} \, x_{i{}j} \, , E_r \,\big\rangle  \; =   \hfill  \\
     \hskip13pt   = \;  \delta_{i,\,r,j-1} \, \delta_{i,\,k} \, + \, e^{+\hbar({(-1)}^{p(r)} \delta_{i,r} - {(-1)}^{p(r+1)} \delta_{i,r+1})} \, \delta_{i,\,j} \, \delta_{i,k-1,\,r} \, -   \hfill  \\
     \hskip26pt   - \, {(-1)}^{p_{i{}k} p_{i{}j}} \, e^{+\hbar \, {(-1)}^{p(i)}} \big(\, \delta_{i,\,r,k-1} \, \delta_{i,\,j} \, + e^{+\hbar({(-1)}^{p(r)} \delta_{i,r} - {(-1)}^{p(r+1)} \delta_{i,r+1})} \, \delta_{i,\,k} \, \delta_{i,j-1,\,r} \big)  \; =  \\
     \hskip39pt   = \;  \delta_{i,j-1,k,\,r} \, + \, e^{+\hbar \, {(-1)}^{p(i)}} \, \delta_{i,\,j,\,k-1,\,r} \, -   \hfill  \\
     \hfill   - \, {(-1)}^{p_{i{}k} p_{i{}j}} \, e^{+\hbar \, {(-1)}^{p(i)}} \big(\, \delta_{i,\,j,\,k-1,\,r} \, + e^{+\hbar \, {(-1)}^{p(r)}} \, \delta_{i,j-1,\,k,\,r} \big)  \; =  \\
     \hskip39pt   = \;  0 \, + \, e^{+\hbar \, {(-1)}^{p(i)}} \, \delta_{i,\,j,\,k-1,\,r} \, - \, {(-1)}^{p_{i{}k} p_{i{}j}} \, e^{+\hbar \, {(-1)}^{p(i)}} \, \delta_{i,\,j,\,k-1,\,r} \, + \, 0  \; =   \hfill  \\
     \hskip149pt   = \;  \Big(\, 1 \, - \, {(-1)}^\zero \,\Big) \, e^{+\hbar \, {(-1)}^{p(i)}} \, \delta_{i,\,j,\,k-1,\,r}  \,\; = \;\,  0  \\
  \end{gathered}
\end{equation*}
 which yields  $ \; \langle\, \eta \, , \gamma \,\rangle \, = \, 0 \; $  for  $ \, \eta = x_{i,\,j} \, x_{i,\,k} - {(-1)}^{p_{i{}j} p_{i{}k}} \, e^{+\hbar \, {(-1)}^{p(i)}} \, x_{i,\,k} \, x_{i,\,j} \, $  (with  $ \, j<k \, $)  and  $ \, \gamma = E_r \, $.  Similar computations   --- exploiting  \eqref{eq: x*x_G}  and \eqref{eq: x*x_F},  respectively ---   take care of the cases  $ \, \gamma = \varGamma_s \, $  and  $ \, \gamma = F_r \, $  alike.
 \vskip5pt
   $ \underline{\text{If  $ \, i < h \, $}} \, $,
   \,a similar analysis
 to the previous one proves  $ \, \langle\, \eta \, , \gamma \,\rangle = 0 \, $  for  $ \, \eta = x_{i{}j} \, x_{h{}j} - {(-1)}^{p_{i{}j} p_{h{}j}} e^{+\hbar \, {(-1)}^{p(j)}} x_{h{}j} \, x_{i{}j} \, $  (with  $ \, i\!<\!h \, $)  and  $ \, \gamma = E_r \, $,  $ \, \gamma = \varGamma_s \, $  or  $ \, \gamma = F_r \, $.
 \vskip5pt
   $ \underline{\text{If  $ \, i < h \, $  and  $ \, j > k \, $}} \, $,  \,then from  \eqref{eq: x*x_E}  we get
\begin{equation*}
  \begin{gathered}
     \hskip-15pt   \big\langle\, x_{i{}j} \, x_{h{}k} \, - \, {(-1)}^{p_{i{}j} p_{h{}k}} \, x_{h{}k} \, x_{i{}j} \, , E_r \,\big\rangle  \; =   \hfill  \\
     \hskip-6pt   = \;  \delta_{i,\,r,\,j-1} \, \delta_{h,\,k} \, + \, e^{+\hbar \, ({(-1)}^{p(r)} \delta_{i,r} - {(-1)}^{p(r+1)} \delta_{i,r+1})} \, \delta_{i,\,j} \, \delta_{h,\,r,\,k-1} \, -   \hfill  \\
     \hfill   - \, {(-1)}^{p_{i{}j} p_{h{}k}} \, \big(\, \delta_{h,\,r,\,k-1} \, \delta_{i,\,j} \, + \, e^{+\hbar \, ({(-1)}^{p(r)} \delta_{h,r} - {(-1)}^{p(r+1)} \delta_{h,r+1})} \, \delta_{h,\,k} \, \delta_{i,\,r,\,j-1} \,\big)  \; =  \\
     \hskip-6pt   = \;  \delta_{i,\,r,\,j-1} \, \delta_{h,\,k} \, + \, e^{+\hbar \, ({(-1)}^{p(r)} \delta_{i,r} - {(-1)}^{p(r+1)} \delta_{i,r+1})} \, \delta_{i,\,j} \, \delta_{h,\,r,\,k-1} \; -   \hfill  \\
     \hfill   - \; {(-1)}^{p_{i{}j} p_{h{}k}} \, \delta_{h,\,r,\,k-1} \, \delta_{i,\,j} \, - \, {(-1)}^{p_{i{}j} p_{h{}k}} \, e^{+\hbar \, ({(-1)}^{p(r)} \delta_{h,r} - {(-1)}^{p(r+1)} \delta_{h,r+1})} \, \delta_{h,\,k} \, \delta_{i,\,r,\,j-1}  \; =  \\
     \hfill   = \,  \delta_{i,\,r,\,j-1} \, \delta_{h,\,k}  \, + \,  \delta_{i,\,j} \, \delta_{h,\,r,\,k-1}  \, - \,  \delta_{h,\,r,\,k-1} \, \delta_{i,\,j}  \, - \,  \delta_{h,\,k} \, \delta_{i,\,r,\,j-1}  \; = \;  0
  \end{gathered}
\end{equation*}
 which yields  $ \; \langle\, \eta \, , \gamma \,\rangle \, = \, 0 \; $  for  $ \, \eta = x_{i{}j} \, x_{h{}k} \, - \, {(-1)}^{p_{i{}j} p_{h{}k}} \, x_{h{}k} \, x_{i{}j} \, $  (with  $ \, i<h \, $  and  $ \, j>k \, $)  and  $ \, \gamma = E_r \, $.  Then similar calcutations   --- using  \eqref{eq: x*x_G}  and \eqref{eq: x*x_F},  respectively ---   settle the cases  $ \, \gamma = \varGamma_s \, $  and  $ \, \gamma = F_r \, $
   alike.
%
 \vskip5pt
   $ \underline{\text{If  $ \, i < h \, $  and  $ \, j < k \, $}} \, $,  \,then from  \eqref{eq: x*x_E}  again we get
  $$  \displaylines{
   \big\langle\, x_{i{}j} \, x_{h{}k} \, - \, {(-1)}^{p_{i{}j} p_{h{}k}} x_{h{}k} \, x_{i{}j} \, - \, {(-1)}^{p_{i{}j} p_{i{}k} + p(i)} \big( e^{+\hbar} - e^{-\hbar} \big) \, x_{i{}k} \, x_{h{}j} \, , E_r \,\big\rangle  \; =   \hfill  \cr
   \hskip7pt   = \;  \delta_{i,\,r,\,j-1} \, \delta_{h,\,k} \, + \, e^{+\hbar \, ({(-1)}^{p(r)} \delta_{i,r} - {(-1)}^{p(r+1)} \delta_{i,r+1})} \, \delta_{i,\,j} \, \delta_{h,\,k-1,\,r} \, -   \hfill  \cr
   \hskip19pt   - \;  {(-1)}^{p_{i{}j} p_{h{}k}} \, \big(\, \delta_{h,\,r,\,k-1} \, \delta_{i,\,j} \, + \, e^{+\hbar \, ({(-1)}^{p(r)} \delta_{h,r} - {(-1)}^{p(r+1)} \delta_{h,r+1}} \delta_{h,\,k} \, \delta_{i,\,j-1,\,r} \,\big)  \; -   \hfill  \cr
   \hskip31pt   - \;  {(-1)}^{p_{i{}j} p_{i{}k} + p(i)} \big( e^{+\hbar} - e^{-\hbar} \big) \, \delta_{i,\,r,\,k-1} \, \delta_{h,\,j}  \; -   \hfill  \cr
   \hfill   - \;  {(-1)}^{p_{i{}j} p_{i{}k} + p(i)} \big(\, e^{+\hbar} - e^{-\hbar} \,\big) \, e^{+\hbar \, ({(-1)}^{p(r)} \delta_{i,r} - {(-1)}^{p(r+1)} \delta_{i,r+1})} \, \delta_{i,\,k} \, \delta_{h,\,j-1,\,r}  \; =   \qquad  \cr
     \hskip7pt   = \;  \delta_{i,\,r,\,j-1} \, \delta_{h,\,k} \, + \, \delta_{i,\,j} \, \delta_{h,\,k-1,\,r} \, - \, {(-1)}^{p_{i{}j} p_{h{}k}} \, \big(\, \delta_{h,\,r,\,k-1} \, \delta_{i,\,j} \, + \, \delta_{h,\,k} \, \delta_{i,\,j-1,\,r} \,\big)  \; -   \hfill  \cr
     \hfill   - \;  \big(\, e^{+\hbar} - e^{-\hbar} \,\big) \, {(-1)}^{p_{i{}j} p_{i{}k} + p(i)} \big(\, \delta_{i,\,r,\,k-1} \, \delta_{h,\,j} \, - \, \delta_{i,\,k} \, \delta_{h,\,j-1,\,r} \,\big)  \; =   \qquad \cr
     \hskip0pt   = \;  \delta_{i,\,r,\,j-1} \, \delta_{h,\,k} \, + \, \delta_{i,\,j} \, \delta_{h,\,k-1,\,r} \, - \, {(-1)}^\zero \, \delta_{h,\,r,\,k-1} \, \delta_{i,\,j} \, - \, {(-1)}^\zero \, \delta_{h,\,k} \, \delta_{i,\,j-1,\,r}  \; -   \hfill  \cr
     \hfill   - \big( e^{+\hbar} \! - e^{-\hbar} \,\big) {(-1)}^{p_{i{}j} p_{i{}k} + p(i)} \, \delta_{i,\,r,\,k-1} \, \delta_{h,\,j} \, + \big( e^{+\hbar} - e^{-\hbar} \,\big) {(-1)}^{p_{i{}j} p_{i{}k} + p(i)} \, \delta_{i,\,k} \, \delta_{h,\,j-1,\,r}  \, = \,  0  }  $$
 where the last identity follows because we have found an element (looking at all possible values of the Kronecker  $ \delta_{q,\,p} $'s)  of the form
  $ \; A + B - A - B - C - D \; $,
 \;where the last two summands are  $ \, C = 0 = D \, $  since the various restrictions eventually force  $ \, i < h = j < k = i+1 \, $  and  $ \, j < k = i < h = j+1 \, $,  \,which both are impossible.  Thus for  $ \, \eta = x_{i{}j} \, x_{h{}k} \, - \, {(-1)}^{p_{i{}j} p_{h{}k}} x_{h{}k} \, x_{i{}j} \, - \, {(-1)}^{p_{i{}j} + p_{i{}k} + p(i)} \big( e^{+\hbar} - e^{-\hbar} \big) \, x_{i{}k} \, x_{h{}j} \, $  (with  $ \, i<h \, $  and  $ \, j<k \, $)  and  $ \, \gamma = E_r \, $   we get  $ \; \langle\, \eta \, , \gamma \,\rangle \, = \, 0 \; $,  \,as expected.
                                                                     \par
   Parallel calculations, using  \eqref{eq: x*x_G}  and \eqref{eq: x*x_F},
   work
 for  $ \, \gamma = \varGamma_s \, $  and  $ \, \gamma = F_r \, $  too.
 \vskip9pt
   \textit{$ \underline{\text{Proof of (5--b)}} $:}\;  We prove that  $ \, \Delta(\eta\hskip0,5pt) \in \J \otimes \tilde{\mathbb{F}}_\hbar + \tilde{\mathbb{F}}_\hbar \otimes \J \, $  by explicitly computing  $ \, \Delta(\eta) \, $,  for every generator  $ \eta $  of  $ \J $  as in  \eqref{eq: gen.'s-ideal-J-in-Fh}.
 \vskip7pt
   For  $ \; \eta = x_{i{}j}^{\,2} \; $   --- with  $ \, p_{i{}j} = \one \, $  ---   computations give
  $$  \displaylines{
   \Delta\big( x_{i{}j}^{\,2} \big)  \; = \;  {\Delta(x_{i{}j})}^2  \; = \;  {\Big(\, {\textstyle \sum_{t=1}^n} \, {(-1)}^{p_{i{}t} \, p_{t{}j}} x_{i{}t} \otimes x_{t{}j} \Big)}^{\!2}  \; =   \hfill  \cr
   \qquad   = \;  {\textstyle \sum_{h,k=1}^n} \, {(-1)}^{p_{i{}h} \, p_{h{}j}} {(-1)}^{p_{i{}k} \, p_{k{}j}} \big( x_{i{}h} \otimes x_{h{}j} \big) \big( x_{i{}k} \otimes x_{k{}j} \big)  \; =   \hfill  \cr
   \qquad \qquad   = \;  {\textstyle \sum_{h,k=1}^n} \, {(-1)}^{p_{i{}h} \, p_{h{}j}} {(-1)}^{p_{i{}k} \, p_{k{}j}} {(-1)}^{p_{i{}k} \, p_{h{}j}} \, x_{i{}h} \, x_{i{}k} \otimes x_{h{}j} \, x_{k{}j}  \; =   \hfill \cr
   \quad   = \;  {\textstyle \sum_{\ell=1}^n} \, {(-1)}^{p_{i{}\ell} \, p_{\ell{}j}} \, x_{i{}\ell}^{\,2} \otimes x_{\ell{}j}^{\,2}  \; +   \hfill  \cr
   \quad \quad \qquad \qquad   + \;  {\textstyle \sum_{\substack{h,k=1  \\  h<k}}^n} \, \Big( {(-1)}^{p_{i{}h} \, p_{h{}j} \, + \, p_{i{}k} \, p_{k{}j} \, + \, p_{i{}k} \, p_{h{}j}} \cdot x_{i{}h} \, x_{i{}k} \otimes x_{h{}j} \, x_{k{}j}  \; +   \hfill  \cr
   \quad \quad \quad \qquad \qquad \qquad   + \;  {(-1)}^{p_{i{}k} \, p_{k{}j} \, + \, p_{i{}h} \, p_{h{}j} \, + \, p_{i{}h} \, p_{k{}j}} \cdot x_{i{}k} \, x_{i{}h} \otimes x_{k{}j} \, x_{h{}j} \Big)  \; =   \hfill  \cr
   \quad   = \;  {\textstyle \sum_{\ell=1}^n} \, {(-1)}^{p_{i{}\ell} \, p_{\ell{}j}} \, x_{i{}\ell}^{\,2} \otimes x_{\ell{}j}^{\,2}  \; +   \hfill  \cr
   \quad \quad \qquad \qquad   + \;  {\textstyle \sum_{\substack{h,k=1  \\  h<k}}^n}
\, {(-1)}^{p_{i{}h} \, p_{h{}j} + p_{i{}k} \, p_{k{}j}} \,
\Big( {(-1)}^{p_{i{}k} \, p_{h{}j}} \, x_{i{}h} \, x_{i{}k} \otimes x_{h{}j} \, x_{k{}j}  \; +   \hfill  \cr
   \quad \quad \quad \qquad \qquad \qquad   + \;  {(-1)}^{p_{i{}h} \, p_{k{}j}} \, x_{i{}k} \, x_{i{}h} \otimes x_{k{}j} \, x_{h{}j} \Big)  \; =   \hfill  \cr
   \quad   = \;  {\textstyle \sum_{\ell=1}^n} \, {(-1)}^{p_{i{}\ell} \, p_{\ell{}j}} \, x_{i{}\ell}^{\,2} \otimes x_{\ell{}j}^{\,2}  \; + \;  {\textstyle \sum_{\substack{h,k=1  \\  h<k}}^n}
\, {(-1)}^{p_{i{}h} \, p_{h{}j} + p_{i{}k} \, p_{k{}j}} \, \times   \hfill  \cr
   \hfill   \times \, \Big( {(-1)}^{p_{i{}k} \, p_{h{}j}} \, x_{i{}h} \, x_{i{}k} \otimes x_{h{}j} \, x_{k{}j}  \; + \;  {(-1)}^{p_{i{}h} p_{k{}j}} \, x_{i{}k} \, x_{i{}h} \otimes x_{k{}j} \, x_{h{}j} \Big)  }  $$
 where we took into account that  $ \, p_{i,j} = \one \, $  means that either  $ \, p(i) = \zero \, $  or  $ \, p(j) = \zero \, $,  hence  $ \, p_{i{}t} = \zero \, $  or  $ \, p_{t{}j} = \zero \, $  for all  $ t \, $.
 \vskip5pt
   Now let us consider the summands occurring in the last two sums above:
 \vskip3pt
   --- \  in the first sum, each summand  $ \, {(-1)}^{p_{i{}\ell} \, p_{\ell{}j}} \, x_{i{}\ell}^{\,2} \otimes x_{\ell{}j}^{\,2} \, $  obviously belongs to  $ \, \J \otimes \tilde{\mathbb{F}}_\hbar + \tilde{\mathbb{F}}_\hbar \otimes \J \, $,  \,because  $ \, p_{i,j} = \one \, $  implies that either  $ \, p_{i{}\ell} = \one \, $  or  $ \, p_{\ell{}j} = \one \, $  and accordingly it is  $ \, x_{i{}\ell}^{\,2} \in \J \, $  or  $ \, x_{\ell{}j}^{\,2} \in \J \, $;  hence we are done.
 \vskip3pt
   --- \  in the second sum, we re-write each tensor factor  $ \, x_{i{}h} \, x_{i{}k} \, $  and  $ \, x_{h{}j} \, x_{k{}j} \, $  as
%
%
  $$  x_{i{}h} x_{i{}k}  \; = \;  \hat\gamma_{h{}k}^{(i)} + \hat{c}_{h{}k}^{(i)} \, x_{i{}k} \, x_{i{}h} \quad ,   \qquad   x_{h{}j} x_{k{}j}  \; = \;  \check\gamma_{h{}k}^{(j)} + \check{c}_{h{}k}^{(i)} \, x_{k{}j} \, x_{h{}j}  $$
 where we set  $ \; \hat\gamma_{h{}k}^{(i)} \, := \, x_{ih} \, x_{ik} - \hat{c}_{h{}k}^{(i)} \, x_{i{}k} \, x_{i{}h} \, \in \J \; $  with  $ \; \hat{c}_{h{}k}^{(i)} := {(-1)}^{p_{i{}h} p_{i{}k}} e^{+\hbar \, {(-1)}^{p(i)}} \; $  and similarly also  $ \; \check\gamma_{h{}k}^{(j)} \, := \, x_{h{}j} \, x_{k{}j} - \check{c}_{h{}k}^{(i)} \, x_{k{}j} \, x_{h{}j} \, \in \J \; $  with  $ \; \check{c}_{h{}k}^{(i)} := {(-1)}^{p_{h{}j} p_{k{}j}} e^{+\hbar \, {(-1)}^{p(j)}} \; $   --- cf.\  \eqref{eq: gen.'s-ideal-J-in-Fh}.  Then the contribution to the  $ (h,k) $--th  summand in the second sum that occurs in the last line above can be re-written as
  $$  \displaylines{
   {(-1)}^{p_{i{}k} \, p_{h{}j}} \, \Big(\, \hat\gamma_{h{}k}^{(i)} \otimes \check\gamma_{h{}k}^{(j)}  \; + \;  \hat\gamma_{h{}k}^{(i)} \otimes \check{c}_{h{}k}^{(i)} \, x_{k{}j} \, x_{h{}j}  \; + \;  \hat{c}_{h{}k}^{(i)} \, x_{i{}h} \, x_{i{}k} \otimes \check\gamma_{h{}k}^{(j)} \,\Big)  \; +   \hfill  \cr
%
%
   \hfill   + \;  \, {(-1)}^{p_{i{}k} \, p_{h{}j}} \, \hat{c}_{h{}k}^{(i)} \, \check{c}_{h{}k}^{(i)} \; x_{i{}k} \, x_{i{}h} \otimes x_{k{}j} \, x_{h{}j}  \; + \;  {(-1)}^{p_{i{}h} p_{k{}j}} \, x_{i{}k} \, x_{i{}h} \otimes x_{k{}j} \, x_{h{}j}  }  $$
 with the first line obviously providing an element in  $ \, \J \otimes \tilde{\mathbb{F}}_\hbar + \tilde{\mathbb{F}}_\hbar \otimes \J \, $,  while the second
%
%
 provides
 to the overall sum the following contribution
  $$  \displaylines{
   {(-1)}^{p_{i{}k} \, p_{h{}j} \, + \,p_{i{}h} p_{i{}k} \, + \, p_{h{}j} p_{k{}j}} e^{+\hbar \, ({(-1)}^{p(i)} + {(-1)}^{p(j)})} \, x_{i{}k} \, x_{i{}h} \otimes x_{k{}j} \, x_{h{}j}  \; +   \hfill  \cr
   \hfill   + \;  {(-1)}^{p_{i{}h} p_{k{}j}} \, x_{i{}k} \, x_{i{}h} \otimes x_{k{}j} \, x_{h{}j}  \; =   \quad  \cr
   \hfill   = \;  {(-1)}^{p_{i{}h} \, p_{k{}j}} \big( {(-1)}^{p_{i{}h} \, p_{k{}j} \, + \, p_{i{}k} \, p_{h{}j} \, + \, p_{i{}h} p_{i{}k} \, + \, p_{h{}j} p_{k{}j}} \, + \, 1 \,\big) \, x_{i{}k} \, x_{i{}h} \otimes x_{k{}j} \, x_{h{}j}  \; = \;  0  }  $$
 where we exploited the facts that  $ \, p_{i{}j} \, = \one \, $  implies both  $ \; {(-1)}^{p(i)} + {(-1)}^{p(j)} = \zero \; $  and  $ \, p_{i{}h} \, p_{k{}j} \, + \, p_{i{}k} \, p_{h{}j} \, + \, p_{i{}h} \, p_{i{}k} \, + \, p_{h{}j} \, p_{k{}j} \, = \, p_{i{}j} \, = \, \one \, $.  Tiding everything up, we find that the second sum also belongs  $ \, \J \otimes \tilde{\mathbb{F}}_\hbar + \tilde{\mathbb{F}}_\hbar \otimes \J \, $,  \,and we are done.
 \vskip5pt
   For  $ \; \eta \, = \, x_{i{}j} \, x_{i{}k} - {(-1)}^{p_{i{}j} p_{i{}k}} \, e^{+\hbar \, {(-1)}^{p(i)}} x_{i{}k} \, x_{i{}j} \; $   --- with  $ \, j < k \, $  ---   we compute
  $$  \displaylines{
   \Delta(\eta)  \; = \;  \Delta\big( x_{i{}j} \, x_{i{}k} - {(-1)}^{p_{i{}j} p_{i{}k}} \, e^{+\hbar \, {(-1)}^{p(i)}} x_{i{}k} \, x_{i{}j} \big)  \; =   \hfill  \cr
   \hskip15pt   = \;  {\bigg(\, {\textstyle \sum\limits_{t=1}^n} {(-1)}^{p_{i{}t} \, p_{t{}j}} x_{i{}t} \otimes x_{t{}j} \bigg)} {\bigg(\, {\textstyle \sum\limits_{\ell=1}^n} {(-1)}^{p_{i{}\ell} \, p_{\ell{}k}} x_{i{}\ell} \otimes x_{\ell{}k} \bigg)}  \; -   \hfill  \cr
   \hfill   - \;  {(-1)}^{p_{i{}j} p_{i{}k}} \, e^{+\hbar \, {(-1)}^{p(i)}} {\bigg(\, {\textstyle \sum\limits_{t=1}^n} {(-1)}^{p_{i{}t} \, p_{t{}k}} x_{i{}t} \otimes x_{t{}k} \bigg)} {\bigg(\, {\textstyle \sum\limits_{\ell=1}^n} {(-1)}^{p_{i{}\ell} \, p_{\ell{}j}} x_{i{}\ell} \otimes x_{\ell{}j} \bigg)}  \; =  \cr
   = \;  {\textstyle \sum_{h=1}^n} \, A'_h  \, + \,  {\textstyle \sum_{\substack{t,\ell=1  \\  t<\ell}}^n} \, \big( B'_{t,\ell} + C'_{t,\ell} \big)  \, + \,  {\textstyle \sum_{h=1}^n} \, A''_h  \, + \,  {\textstyle \sum_{\substack{t,\ell=1  \\  t<\ell}}^n} \, \big( B''_{t,\ell} + C''_{t,\ell} \big)  \; =  \cr
   \hfill   = \;  {\textstyle \sum_{h=1}^n} \big( A'_h + A''_h \big)  \, + \,  {\textstyle \sum_{\substack{t,\ell=1  \\  t<\ell}}^n} \, \big( B'_{t,\ell} + C'_{t,\ell} + B''_{t,\ell} + C''_{t,\ell} \big)  }  $$
 where we used notation (for all  $ \, h, t, \ell \in I_n := \{1,\dots,n\} \, $  with  $ \, t < \ell \, $)
\begin{align*}
   A'_h \,  &  \, := \,  {(-1)}^{p_{i{}h} p_{h{}k}} \, x_{i{}h}^{\,2} \otimes x_{h{}j} \, x_{h{}k}  \\
   A''_h \,  &  \, := \,  -{(-1)}^{p_{i{}j} p_{i{}k}} e^{\hbar \, {(-1)}^{p(i)}} {(-1)}^{p_{i{}h} p_{h{}j}} \, x_{i{}h}^{\,2} \otimes x_{h{}k} \, x_{h{}j}  \\
   B'_{t,\ell}  &  \, := \,  {(-1)}^{p_{i{}t} p_{t{}j} + p_{i{}\ell} p_{\ell{}k} + p_{i{}\ell} p_{t{}j}} \, x_{i{}t} \, x_{i{}\ell} \otimes x_{t{}j} \, x_{\ell{}k}  \\
   B''_{t,\ell}  &  \, := \,  -{(-1)}^{p_{i{}j} p_{i{}k}} e^{\hbar \, {(-1)}^{p(i)}} {(-1)}^{p_{i{}t} p_{t{}k} + p_{i{}\ell} p_{\ell{}j} + p_{i{}\ell} p_{t{}k}} \, x_{i{}t} \, x_{i{}\ell} \otimes x_{t{}k} \, x_{\ell{}j}   \\
   C'_{t,\ell}  &  \, := \,  {(-1)}^{p_{i{}\ell} p_{\ell,j} + p_{i{}t} p_{t{}k} + p_{i{}t} p_{\ell{}j}} \, x_{i{}\ell} \, x_{i{}t} \otimes x_{\ell{}j} \, x_{t{}k}   \\
   C''_{t,\ell}  &  \, := \,  -{(-1)}^{p_{i{}j} p_{i{}k}} e^{\hbar \, {(-1)}^{p(i)}} {(-1)}^{p_{i{}\ell} p_{\ell{}k} + p_{i{}t} p_{t{}j} + p_{i{}t} p_{\ell{}k}} \, x_{i{}\ell} \, x_{i{}t} \otimes x_{\ell{}k} \, x_{t{}j}
\end{align*}
   \indent   Now, note that  $ \; A'_h , A''_h \in \J \otimes \tilde{\mathbb{F}}_\hbar + \tilde{\mathbb{F}}_\hbar \otimes \J \; $  (for all  $ h \, $)  so that the first sum in the expansion of  $ \Delta(\eta) $   --- namely  $ \, {\textstyle \sum_{h=1}^{n+1}} \big( A'_h + A''_h \big) \, $  ---   belongs to  $ \, \J \otimes \tilde{\mathbb{F}}_\hbar + \tilde{\mathbb{F}}_\hbar \otimes \J \, $.
                                                                \par
   As to the second sum, we fix more notation.  Given  $ \, X, Y \in \tilde{\mathbb{F}}_\hbar \, $,  let us write  $ \, X \equiv_\J Y \, $  if  $ X $  is equivalent to  $ Y $  modulo  $ \J $  (in  $  \tilde{\mathbb{F}}_\hbar \, $),  Similarly, for any  $ \, X_\otimes \, , Y_\otimes \in \tilde{\mathbb{F}}_\hbar^{\,\otimes 2} \, $,  we write  $ \, X_\otimes \equiv_{\J_\otimes} Y_\otimes \, $  if  $ X_\otimes $  is equivalent to  $ Y_\otimes $  modulo  $ \, \J_\otimes := \J \otimes \tilde{\mathbb{F}}_\hbar + \tilde{\mathbb{F}}_\hbar \otimes \J \, $.  Now, definitions give
 $ \,\; x_{i{}t} \, x_{i{}\ell} \, \equiv_\J {(-1)}^{p_{i{}t} p_{i{}\ell}} e^{\hbar \, {(-1)}^{p(i)}} x_{i{}\ell} \, x_{i{}t} \;\, $
 (for  $ \, t < \ell \, $),  which implies
\begin{align*}  \label{eq: B'[t,l] rewritten}
 B'_{t,\ell}  &  \,\; \equiv_{\J_\otimes}  \, x_{i{}\ell} \, x_{i{}t} \otimes\! \big( {(-1)}^{p_{i{}t} p_{i{}\ell}} e^{\hbar \, {(-1)}^{p(i)}}
{(-1)}^{p_{i{}t} p_{t{}j} + p_{i{}\ell} p_{\ell{}k} + p_{i{}\ell} p_{t{}j}} \, x_{t{}j} \, x_{\ell{}k} \big)  \\
 B''_{t,\ell}  &  \,\; \equiv_{\J_\otimes}  \, x_{i{}\ell} \, x_{i{}t} \otimes\! \big(\! - \! {(-1)}^{p_{i{}t} p_{i{}\ell}} e^{2 \hbar \, {(-1)}^{p(i)}} {(-1)}^{p_{i{}j} p_{i{}k}} {(-1)}^{p_{i{}t} p_{t{}k} + p_{i{}\ell} p_{\ell{}j} + p_{i{}\ell} p_{t{}k}} \, x_{t{}k} \, x_{\ell{}j} \big)
\end{align*}
 Bundling everything up we find, in the second sum in the expansion of  $ \Delta(\eta) $,  that
\begin{equation}  \label{eq: B'+C'+B"+C" => b'+c'+b"+c"}
  B'_{t,\ell} + C'_{t,\ell} + B''_{t,\ell} + C''_{t,\ell}  \;\; \equiv_{\J_\otimes} \;  x_{i{}\ell} \, x_{i{}t} \,\otimes \big(\, \text{b}'_{t,\ell} + \text{c}'_{t,\ell} + \text{b}''_{t,\ell} + \text{c}''_{t,\ell} \,\big)
\end{equation}
 where we adopted notation
\begin{align*}
   \text{b}'_{t,\ell}  &  \,\; := \;\,  {(-1)}^{p_{i{}t} p_{i{}\ell} + p_{i{}t} p_{t{}j} + p_{i{}\ell} p_{\ell{}k} + p_{i{}\ell} p_{t{}j}} \, e^{\hbar \, {(-1)}^{p(i)}} \, x_{t{}j} \, x_{\ell{}k}  \\
   \text{b}''_{t,\ell}  &  \,\; := \;\,  - {(-1)}^{p_{i{}t} p_{i{}\ell} + p_{i{}j} p_{i{}k} + p_{i{}t} p_{t{}k} + p_{i{}\ell} p_{\ell{}j} + p_{i{}\ell} p_{t{}k}} \, e^{2 \, \hbar \, {(-1)}^{p(i)}} \, x_{t{}k} \, x_{\ell{}j}   \\
   \text{c}'_{t,\ell}  &  \,\; := \;\,  {(-1)}^{p_{i{}\ell} p_{\ell{}j} + p_{i{}t} p_{t{}k} + p_{i{}t} p_{\ell{}j}} \, x_{\ell{}j} \, x_{t{}k}  \,\; \equiv_\J \;  {(-1)}^{p_{i{}\ell} p_{\ell{}j} + p_{i{}t} p_{t{}k} + p_{i{}t} p_{\ell{}j} + p_{t{}k} p_{\ell{}j}} \, x_{t{}k} \, x_{\ell{}j}   \\
   \text{c}''_{t,\ell}  &  \,\; := \;\,  -{(-1)}^{p_{i{}j} p_{i{}k} + p_{i{}\ell} p_{\ell{}k} + p_{i{}t} p_{t{}j} + p_{i{}t} p_{\ell{}k}} \, e^{\hbar \, {(-1)}^{p(i)}} \, x_{\ell{}k} \, x_{t{}j}
\end{align*}
 where in third line we used the relation  $ \; x_{\ell{}j} \, x_{t{}k} \, \equiv_\J {(-1)}^{p_{t{}k} p_{\ell{}j}} \, x_{t{}k} \, x_{\ell{}j} \; $.  Also, we have
  $$  x_{t{}j} \, x_{\ell{}k}  \,\; \equiv_\J \;  {(-1)}^{p_{t{}j} p_{\ell{}k}} \, x_{\ell{}k} \, x_{t{}j} \, + \, {(-1)}^{p_{t{}j} p_{t{}k} + p(t)} \big(\, e^{+\hbar} - e^{-\hbar} \big) \, x_{t{}k} \, x_{\ell{}j}  $$
 Using this to re-write  $ \text{b}'_{t,\ell} \, $  along with the formulas, we find that
\begin{equation}  \label{eq: formula x b'+c'+b"+c"}
  \text{b}'_{t,\ell} + \text{c}'_{t,\ell} + \text{b}''_{t,\ell} + \text{c}''_{t,\ell}  \,\; = \;\,  \kappa'_{t,\ell} \, x_{\ell{}k} \, x_{t{}j}  \; + \;  \kappa''_{t,\ell} \, x_{t{}k} \, x_{\ell{}j}
\end{equation}
 where the coefficients are given by
  $$  \displaylines{
   \kappa'_{t,\ell}  \,\; := \;\,  e^{\hbar \, {(-1)}^{p(i)}} {(-1)}^{p_{i{}t} p_{t{}j} + p_{i{}\ell} p_{\ell{}k}} \Big( {(-1)}^{p_{i{}t} p_{i{}\ell} + p_{i,\ell} p_{t,j} + p_{t{}j} p_{\ell{}k}} \, - \, {(-1)}^{p_{i{}j} p_{i{}k} + p_{i{}t} p_{\ell{}k}} \Big)   \hfill  \cr
   \kappa''_{t,\ell}  \,\; := \;\,  {(-1)}^{p_{i{}t} p_{i{}\ell} + p_{i{}t} p_{t{}j} + p_{i{}\ell} p_{\ell{}k} + p_{i{}\ell} p_{t{}j} + p_{t{}j} p_{t{}k} + p(t)} e^{\hbar \, {(-1)}^{p(i)}} \big(\, e^{+\hbar} - e^{-\hbar} \,\big)  \; +   \hfill  \cr
   \hfill   + \;  {(-1)}^{p_{i{}\ell} p_{\ell{}j} + p_{i{}t} p_{t{}k} + p_{i{}t} p_{\ell{}j} + p_{t{}k} p_{\ell{}j}}  \; - \;  {(-1)}^{p_{i{}j} p_{i{}k} + p_{i{}t} p_{t{}k} + p_{i{}\ell} p_{\ell{}j} + p_{i{}\ell} p_{t{}k} + p_{i{}t} p_{i{}\ell}} \, e^{2 \, \hbar \, {(-1)}^{p(i)}}  }  $$
 Finally, an in-depth (yet trivial!) inspection shows that  $ \; \kappa'_{t,\ell} = 0 \; $  and  $ \; \kappa''_{t,\ell} = 0 \; $.  This along with  \eqref{eq: formula x b'+c'+b"+c"}  and  \eqref{eq: B'+C'+B"+C" => b'+c'+b"+c"}  yields
 $ \; B'_{t,\ell} + C'_{t,\ell} + B''_{t,\ell} + C''_{t,\ell} \, \in \, \J \otimes \tilde{\mathbb{F}}_\hbar + \tilde{\mathbb{F}}_\hbar \otimes \J \; $
 (for all  $ \, t < \ell \, $),  whence we conclude that  $ \; \Delta(\eta) \, \in \, \J \otimes \tilde{\mathbb{F}}_\hbar + \tilde{\mathbb{F}}_\hbar \otimes \J \, =: \, \J_\otimes \, $,  \;q.e.d.
 \vskip5pt
   For  $ \; \eta \, = \, x_{ij} \, x_{hj} - {(-1)}^{p_{i,j} p_{h,j}} \, e^{+\hbar \, {(-1)}^{p(j)}} x_{hj} \, x_{ij} \; $   --- with  $ \, i < h \, $  ---   we get  $ \; \Delta(\eta) \, \in \, \J \otimes \tilde{\mathbb{F}}_\hbar + \tilde{\mathbb{F}}_\hbar \otimes \J \, =: \, \J_\otimes \; $  with a parallel analysis to the
   above one.
%
 \vskip5pt
   Finally, for  $ \; \eta \, = \, x_{i{}j} \, x_{h{}k} \, - {(-1)}^{p_{i{}j} p_{h{}k}} x_{h{}k} \, x_{i{}j} \, - {(-1)}^{{p_{i{}j} p_{i{}k}} + p(i)} \big( e^{+\hbar} - e^{-\hbar} \,\big) \, x_{i{}k} \, x_{h{}j} \; $  our computations go as follows.  First of all, acting as before we find
  $$  \displaylines{
   \quad   \Delta\big( x_{i{}j} \, x_{h{}k} \big)  \; = \;  {\textstyle \sum_{t=1}^n} \, \sigma_{i,j}^{h,k}(t) \, x_{i{}t} \, x_{h{}t} \otimes x_{t{}j} \, x_{t{}k} \, +   \hfill  \cr
   \hfill   + \, {\textstyle \sum_{\substack{b,d=1  \\  b<d}}^n} \, \alpha_{i,j}^{h,k}(b,d) \, x_{i{}b} \, x_{h{}d} \otimes x_{b{}j} \, x_{d{}k} \, + \, {\textstyle \sum_{\substack{b,d=1  \\  b<d}}^n} \, \omega_{i,j}^{h,k}(b,d) \, x_{i{}d} \, x_{h{}b} \otimes x_{d{}j} \, x_{b{}k}   \quad  }  $$
 where  $ \; \sigma_{i,j}^{h,k}(t) \, = \, {(-1)}^{p_{i{}t} p_{t{}j} + p_{h{}t} p_{t{}k} + p_{h{}t} p_{t{}j}} \; $,  $ \; \alpha_{i,j}^{h,k}(b,d) \, = \, {(-1)}^{p_{i{}b} p_{b{}j} + p_{h{}d} p_{d{}k} + p_{h{}d} p_{b{}j}} \; $  and  $ \; \omega_{i,j}^{h,k}(b,d) \, = \, {(-1)}^{p_{i{}d} p_{d{}j} + p_{h{}b} p_{b{}k} + p_{h{}b} p_{d{}j}} \; $   --- so that  $ \, \alpha_{i,j}^{h,k}(t,t) = \sigma_{i,j}^{h,k}(t) = \omega_{i,j}^{h,k}(t,t) \, $  ---   and then we have also parallel formulas for  $ \, \Delta\big( x_{h{}k} \, x_{i{}j} \big) \, $  and  $ \, \Delta\big( x_{i{}k} \, x_{h{}j} \big) \, $  as well, just by switching indices in the correct way.  It follows that
  $$  \displaylines{
   \Delta(\eta)  \; = \;  \Delta\Big( x_{i{}j} \, x_{h{}k} \, + A_{i,j}^{h,k} \, x_{h{}k} \, x_{i{}j} \, + \varOmega_{i,j}^{h,k} \, x_{i{}k} \, x_{h{}j} \Big)  \; =   \hfill  \cr
   \quad   = \;  {\textstyle \sum_{t=1}^n} \, \sigma_{i,j}^{h,k}(t) \, x_{i{}t} \, x_{h{}t} \otimes x_{t{}j} \, x_{t{}k} \; +   \hfill  \cr
   \hfill   + \; {\textstyle \sum_{b<d}} \, \big(\, \alpha_{i,j}^{h,k}(b,d) \, x_{i{}b} \, x_{h{}d} \otimes x_{b{}j} \, x_{d{}k} \, + \, \omega_{i,j}^{h,k}(b,d) \, x_{i{}d} \, x_{h{}b} \otimes x_{d{}j} \, x_{b{}k} \,\big) \; +   \quad  \cr
   \quad \quad   + \; A_{i,j}^{h,k} \, {\textstyle \sum_{t=1}^n} \, \sigma_{h,k}^{i,j}(t) \, x_{h{}t} \, x_{i{}t} \otimes x_{t{}k} \, x_{t{}j} \; +   \hfill  \cr
   \hfill   + \; A_{i,j}^{h,k} \, {\textstyle \sum_{b<d}} \, \big(\, \alpha_{h,k}^{i,j}(b,d) \, x_{h{}b} \, x_{i{}d} \otimes x_{b{}k} \, x_{d{}j} \, + \, \omega_{h,k}^{i,j}(b,d) \, x_{h{}d} \, x_{i{}b} \otimes x_{d{}k} \, x_{b{}j} \,\big) \; +   \quad  \cr
   \quad \quad \quad   + \; \varOmega_{i,j}^{h,k} \, {\textstyle \sum_{t=1}^n} \, \sigma_{i,k}^{h,j}(t) \, x_{i{}t} \, x_{h{}t} \otimes x_{t{}k} \, x_{t{}j} \; +   \hfill  \cr
   \hfill   + \; \varOmega_{i,j}^{h,k} \, {\textstyle \sum_{b<d}} \, \big(\, \alpha_{i,k}^{h,j}(b,d) \, x_{i{}b} \, x_{h{}d} \otimes x_{b{}k} \, x_{d{}j} \, + \, \omega_{i,k}^{h,j}(b,d) \, x_{i{}d} \, x_{h{}b} \otimes x_{d{}k} \, x_{b{}j} \,\big)  \; \equiv_{\J_\otimes}  \cr
   \hfill   \equiv_{\J_\otimes} \;   {\textstyle \sum_{t=1}^n} \, C_{i,j}^{h,k}(t)  \, + \, {\textstyle \sum_{b<d}} \, D_{i,j}^{h,k}(b,d)  }  $$
 with  $ \; A_{i,j}^{h,k} := {(-1)}^{\one + p_{i{}j} p_{h{}k}} \; $  and  $ \; \varOmega_{i,j}^{h,k} := {(-1)}^{\one + p_{i{}j} p_{i{}k} + p(i)} \big( e^{+\hbar} - e^{-\hbar} \,\big) \; $.
   \hfill   In particular,
  \eject
\noindent
 $ \Delta(\eta) $  is a linear combination of homogeneous tensors  $ \; x_{r_1,\,s_1} \, x_{\ell_1,\,c_1} \otimes x_{r_2,\,s_2} \, x_{\ell_2,\,c_2} \; $.  Now, by the very definition of  $ \J $   --- cf.\ \eqref{eq: gen.'s-ideal-J-in-Fh}  we have
\begin{equation}  \label{eq: gen.'s-ideal-J-rewritten}
  \begin{gathered}
   \hskip1pt   x_{r{}s} \, x_{r{}c}  \; \equiv_\J \;  {(-1)}^{p_{r{}s} p_{r{}c}} \, e^{+\hbar \, {(-1)}^{p(r)}} \, x_{r{}c} \, x_{r{}s}  \hskip45pt \qquad  \text{if \ \ } s < c   \qquad  \cr
   \hskip1pt   x_{r{}s} \, x_{\ell\,{}s}  \; \equiv_\J \;  {(-1)}^{p_{r{}s} p_{\ell{}s}} \, e^{+\hbar \, {(-1)}^{p(s)}} \, x_{\ell\,{}s} \, x_{r{}s}  \hskip45pt \qquad  \text{if \ \ } r < \ell   \qquad  \cr
   \qquad \quad   x_{r{}s} \, x_{\ell\,{}c}  \; \equiv_\J \;  {(-1)}^{p_{\ell{}c} p_{r{}s}} x_{\ell\,{}c} \, x_{r{}s}  \qquad \hskip31pt \qquad \  \text{if \ \ }  r < \ell \, , \; s > c   \hskip15pt  \cr
   \hskip11pt   x_{r{}s} \, x_{\ell\,{}c}  \; \equiv_\J \,  A_{r,s}^{\ell,c} \; x_{\ell\,{}c} \, x_{r{}s} \, + \, \varOmega_{r,s}^{\ell,c} \; x_{r{}c} \, x_{\ell\,{}s}   \hskip55pt   \text{if \ \ }  r < \ell \, , \; s < c   \hskip11pt
  \end{gathered}
\end{equation}
 Then, using the first two relations we can re-write each  $ t $--th  summand in the three sums above of the form  ``$ \, \sum_{t=1}^n \, $''  as a suitable multiple of the single homogeneous tensor  $ \,  x_{h{}t} \, x_{i{}t} \otimes x_{t{}k} \, x_{t{}j} \, $:  \,therefore, from this we find that in the relation
\begin{equation}   \label{eq: last-expr x Delta(eta)}
  \Delta(\eta)  \,\; \equiv_{\J_\otimes} \;\,  {\textstyle \sum_{t=1}^n}\, C_{i,j}^{h,k}(t) \, + \, {\textstyle \sum_{b<d}}\, D_{i,j}^{h,k}(b,d)
\end{equation}
 found above the first sum is eventually boils down to
  $$  C_{i,j}^{h,k}(t)  \,\; = \;\,  \kappa_{i,j}^{h,k}(t) \, x_{h{}t} \, x_{i{}t} \otimes x_{t{}k} \, x_{t{}j}  $$
 where the coefficient  $ \, \kappa_{i,j}^{h,k}(t) \, $  is explicitly given   --- out of direct computation ---   by
  $$  \displaylines{
   \qquad   \kappa_{i,j}^{h,k}(t)  \,\; = \;\,  \sigma_{i,j}^{h,k}(t) \, {(-1)}^{p_{i{}t} p_{h{}t} + p_{t{}j} p_{t{}k}} \, e^{+2\,\hbar \, {(-1)}^{p(t)}} \, +   \hfill  \cr
   \hfill   + \, A_{i,j}^{h,k} \, \sigma_{h,k}^{i,j}(t) \, + \, \varOmega_{i,j}^{h,k} \, \sigma_{i,k}^{h,j}(t) \, {(-1)}^{p_{i{}t} p_{h{}t}} \, e^{+\hbar \, {(-1)}^{p(t)}}   \qquad  }  $$
 Eventually, a sheer calculation proves that the right-hand side expression above is identically zero, so that  $ \, \kappa_{i,j}^{h,k}(t) = 0 \, $  for all  $ \, t \in \{1,\dots,n\} \, $,  \,so  $ \; {\textstyle \sum_{t=1}^n} C_{i,j}^{h,k}(t) \, = \, 0 \; $.
 \vskip5pt
   Now we go and compute the summands  $ \, D_{i,j}^{h,k}(b,d) \, $  in the second sum in right-hand side of  \eqref{eq: last-expr x Delta(eta)}.  By construction (see the above analysis) they are given by
  $$  \displaylines{
   \quad   D_{i,j}^{h,k}(b,d) \; := \;  {\textstyle \sum_{b<d}} \, \big(\, \alpha_{i,j}^{h,k}(b,d) \, x_{i{}b} \, x_{h{}d} \otimes x_{b{}j} \, x_{d{}k} \, + \, \omega_{i,j}^{h,k}(b,d) \, x_{i{}d} \, x_{h{}b} \otimes x_{d{}j} \, x_{b{}k} \,\big) \; +   \hfill  \cr
   \quad \qquad   + \; A_{i,j}^{h,k} \, {\textstyle \sum_{b<d}} \, \big(\, \alpha_{h,k}^{i,j}(b,d) \, x_{h{}b} \, x_{i{}d} \otimes x_{b{}k} \, x_{d{}j} \, + \, \omega_{h,k}^{i,j}(b,d) \, x_{h{}d} \, x_{i{}b} \otimes x_{d{}k} \, x_{b{}j} \,\big) \; +   \quad  \cr
   \quad \qquad \qquad   + \; \varOmega_{i,j}^{h,k} \, {\textstyle \sum_{b<d}} \, \big(\, \alpha_{i,k}^{h,j}(b,d) \, x_{i{}b} \, x_{h{}d} \otimes x_{b{}k} \, x_{d{}j} \, + \, \omega_{i,k}^{h,j}(b,d) \, x_{i{}d} \, x_{h{}b} \otimes x_{d{}k} \, x_{b{}j} \,\big)  }  $$
 and then, using identities  \eqref{eq: gen.'s-ideal-J-rewritten}  once more, we get for them the new expression
\begin{equation}  \label{eq: def-Dijhk(b,d)}
 \begin{gathered}
   \hskip-3pt   D_{i,j}^{h,k}(b,d)  \; = \;  {\textstyle \sum\limits_{b<d}} \Big(\, \beta_{i,j}^{h,k}(b,d) \, x_{h{}d} \, x_{i{}b} \otimes x_{d{}k} \, x_{b{}j}  \, + \,  \delta_{i,j}^{h,k}(b,d) \, x_{i{}d} \, x_{h{}b} \otimes x_{b{}k} \, x_{d{}j}  \,\; +  \\
   \hfill   + \;\,  \vartheta_{i,j}^{h,k}(b,d) \, x_{h{}d} \, x_{i{}b} \otimes x_{b{}k} \, x_{d{}j}  \, + \,  \lambda_{i,j}^{h,k}(b,d) \, x_{i{}d} \, x_{h{}b} \otimes x_{b{}k} \, x_{d{}j} \Big)
 \end{gathered}
\end{equation}
 where we use notation
\begin{equation*}  \label{eq: def-beta+delta+theta+lambda}
 \begin{gathered}
   \beta_{i,j}^{h,k}(b,d) \,\; := \;\,  \alpha_{i,j}^{h,k}(b,d) \, A_{i,b}^{h,d} \, A_{b,j}^{d,k}  \, + \,  A_{i,j}^{h,k} \, \omega_{h,k}^{i,j}(b,d)   \hfill  \\
   \delta_{i,j}^{h,k}(b,d) \,\; := \;\,  \alpha_{i,j}^{h,k}(b,d) \, \varOmega_{i,b}^{h,d} \, \varOmega_{b,j}^{d,k}  \, + \,  A_{i,j}^{h,k} \, \alpha_{h,k}^{i,j}(b,d) \, {(-1)}^{p_{i{}d} \, p_{h{}b}} \; +   \hfill  \\
   \qquad \qquad \qquad   + \,  \omega_{i,j}^{h,k}(b,d) \, {(-1)}^{p_{b{}k} \, p_{d{}j}}  \, - \,  \varOmega_{i,j}^{h,k} \, \varOmega_{i,b}^{h,d} \, \alpha_{i,k}^{h,j}(b,d)
  \\
   \vartheta_{i,j}^{h,k}(b,d) \,\; := \;\,  \alpha_{i,j}^{h,k}(b,d) \, A_{i,b}^{h,d} \, \varOmega_{b,j}^{d,k}  \, - \,  \varOmega_{i,j}^{h,k} \, A_{i,b}^{h,d} \, \alpha_{i,k}^{h,j}(b,d)   \hfill  \\
   \lambda_{i,j}^{h,k}(b,d) \,\; := \;\,  \alpha_{i,j}^{h,k}(b,d) \, \varOmega_{i,b}^{h,d} \, A_{b,j}^{d,k}  \, + \,  \varOmega_{i,j}^{h,k} \, \omega_{i,k}^{h,j}(b,d)   \hfill  \\
 \end{gathered}
\end{equation*}
   \indent   Finally, direct inspection (long and tricky, yet harmless) shows that
  $$  \beta_{i,j}^{h,k}(b,d) \, := \,  0  \;\; ,  \qquad  \delta_{i,j}^{h,k}(b,d) \, := \,  0  \;\; ,  \qquad  \vartheta_{i,j}^{h,k}(b,d) \, := \,  0  \;\; ,  \qquad  \lambda_{i,j}^{h,k}(b,d) \, := \,  0  $$
 hence from  \eqref{eq: def-Dijhk(b,d)}  we conclude that all  $ D_{i,j}^{h,k}(b,d) $'s  are zero.  This together with  \eqref{eq: last-expr x Delta(eta)}  yields  $ \; \Delta(\eta) \, \equiv_{\J_\otimes} 0 \, $,  \;which ends the proof of  \textit{(5--b)},  \,q.e.d.   \qed

\bigskip
 \bigskip

\vskip21pt

\end{document}